\tikzstyle{Black}=[fill=black, draw=black, shape=circle]
\tikzstyle{Ghost}=[draw=none, shape=circle, fill=none]
\tikzstyle{Empty}=[fill=white, draw=black, shape=circle]
\tikzstyle{none}=[inner sep=0mm]
\tikzstyle{Black_rectangle}=[fill=black, draw=black, shape=rectangle]
\tikzstyle{Big}=[fill=white, draw=black, shape=circle, minimum size=2.4cm]
\tikzstyle{Order}=[dashed, ->, draw={rgb,255: red,191; green,191; blue,191}, ultra thick]
\tikzstyle{Axis}=[draw={rgb,255: red,191; green,191; blue,191}, -]
\tikzstyle{Dashed}=[dashed, -, draw={rgb,255: red,191; green,191; blue,191}]
\tikzstyle{Dashed->}=[->, dashed, draw={rgb,255: red,191; green,191; blue,191}]
\tikzstyle{Black arrow}=[->]
\theoremstyle{plain}
\newtheorem{thm}{Theorem}[section]
\newtheorem{lemma}[thm]{Lemma}
\newtheorem{prop}[thm]{Proposition}
\newtheorem{cor}[thm]{Corollary}
\newcounter{theoremalph}
\newtheorem{thmAlph}[theoremalph]{Theorem}
\newcounter{prob}
\newtheorem{pb}[prob]{Problem}
\numberwithin{equation}{section}
\numberwithin{figure}{section}
\theoremstyle{definition}
\newtheorem{definition}[thm]{Definition}
\newtheorem{remark}[thm]{Remark}
\newtheorem{conv}[thm]{Convention}
\newtheorem{notation}[thm]{Notation}
\newcounter{mynote}% a new counter for use in margin notes
\newcommand{\N}{\mathbb{N}}
\newcommand{\Z}{\mathbb{Z}}
\newcommand{\bN}{\mathbb{N}}
\newcommand{\bZ}{\mathbb{Z}}
\newcommand{\cC}{\mathcal{C}}
\newcommand{\cG}{\mathcal{G}}
\newcommand{\cX}{\mathcal{X}}
\DeclareMathOperator{\enc}{enc}
\DeclareMathOperator{\Exc}{Ex}
\title[Minimal sets in the plane]{Planar lattice subsets with minimal vertex boundary}
\author{Radhika Gupta}
\address{Radhika Gupta, Department of Mathematics, Temple University, Wachman Hall, 1805 North Broad Street, Philadelphia, PA 19122, USA}
\email{radhikagupta.maths@gmail.com}
\author{Ivan Levcovitz}
\address{Ivan Levcovitz, Mathematics Department, Technion - Israel Institute of Technology, Haifa, 32000, Israel}
\email{levcovitz@technion.ac.il}
\author{Alexander Margolis}
\address{Alexander Margolis, Mathematics Department, Technion - Israel Institute of Technology, Haifa, 32000, Israel}
\email{amargolis@campus.technion.ac.il}
\author{Emily Stark}
\address{Emily Stark, Department of Mathematics and Computer Science, Wesleyan University, Science Tower 655, 265 Church Street, Middletown, CT 06459, USA}
\email{estark@wesleyan.edu}
\subjclass[2020]{05C35}
\date{\today}
\begin{document}

\begin{abstract}
A subset of vertices of a graph is \textit{minimal} if, within all subsets of the same size, its vertex boundary is minimal.
We give a complete, geometric characterization of minimal sets for the planar integer lattice $X$.
Our characterization elucidates the structure of all minimal sets, and we are able to use it to obtain several applications. 
We characterize \textit{uniquely minimal} sets of $X$: those which are congruent to any other minimal set of the same size.
We also classify all \emph{efficient} sets of $X$: those that have  maximal size amongst all such sets with a fixed vertex boundary. 
We define and investigate the \emph{graph $\cG$ of minimal sets} whose vertices are congruence classes of minimal sets of $X$ and whose edges connect vertices which can be represented by minimal sets that differ by exactly one vertex. 
We prove that $\cG$ has exactly one infinite component, has infinitely many isolated vertices and has bounded components of arbitrarily large size.
Finally, we show that all minimal sets, except one, are connected. 
\end{abstract}

\maketitle

%%%%%%%%%%%%%%%%%%%%%
%%%%%%%%%%%%%%%%%%%%%
\section{Introduction}
%%%%%%%%%%%%%%%%%%%%%
%%%%%%%%%%%%%%%%%%%%%

%General stuff about isoperimetry. 

The classical isoperimetric problem can be stated as follows: amongst all closed curves in the plane with fixed length, characterize those that enclose the maximal area. The solution to this isoperimetric problem is the circle.
By a simple scaling argument, this problem is easily  seen to be equivalent to the following dual problem: 
\begin{pb}\label{pb:classiciso}
Amongst all closed curves in the plane that enclose a fixed area, characterize those that  have  minimal length. 
\end{pb}
\noindent The isoperimetric problem dates back to antiquity, as documented  in Virgil's account of the founding of Carthage in the  \emph{Aeneid}. However,  the first steps towards a  solution of Problem~\ref{pb:classiciso} were given relatively recently  by Steiner in the \nth{19} century. 
In this article, we give a  solution to the
discrete graph-theoretic  analogue of Problem~\ref{pb:classiciso}.

Discrete isoperimetric problems  have been studied extensively in graph theory, and there  are many
 applications in areas such as network design and the theory of error correcting codes \cite{harper-book, hoory2006expander}.
  Given a graph $X$ with vertex set $V(X)$, the \emph{vertex boundary} of $A\subset V(X)$  is defined by  
\[\partial A:= \{u \in V(X) \setminus A \mid \text{ there exists } v \in A \text{ such that } (u,v) \in E(X)\}.\] 
%\[\partial_eA:= \{(u,v) \in E(G) : u \in A, v \in V(G) \setminus A\}.\] 
%In this article, we are concerned only with the vertex boundary.
%, which we denote by $\partial A$. 
The \emph{vertex isoperimetric problem} for a graph $X$ is the following:
\begin{pb}\label{pb:isop}
	Amongst all subsets of $V(X)$ with a fixed number of vertices, characterize those that have  minimal size vertex boundary.
\end{pb} 
The sets that appear as solutions to Problem~\ref{pb:isop} are called \emph{minimal}.

\subsection*{The isoperimetric problem for the integer lattice in the plane}
In this article we study the graph $X=\Z^2_{\ell_1}$ with vertex set $X^0=\bZ^2$ and edges connecting all pairs of vertices $\ell_1$-distance one apart.
A nested sequence of minimal sets for $X$ is given by Wang--Wang \cite{wangwang77}.

Our approach differs from the usual one of finding a sequence of minimal sets, in that we give a geometric characterization of  \textit{every} minimal set. 
While circles are the natural geometric solution to Problem~\ref{pb:classiciso}, there can be many different congruence classes of minimal sets in $X$ of a given size and our result exactly describes these solutions.
This approach lets us prove many applications that allow us to better understand the collection of all minimal sets. 

Before describing our results, we first establish some notation. We consider subsets of $X^0$ up to the following natural equivalence relation: we say two subsets $A,B \subset X^0$  are \emph{congruent} if there is a graph automorphism $\phi$ of $X$ such that $\phi(A)=B$. It is clear that if $A$ and $B$ are congruent, then $A$ is minimal if and only if $B$ is minimal. 

 Given natural numbers $\alpha, \beta \in \bN$, we define 
$B(\alpha,\beta)$ to be  the set of vertices $(x,y)\in X^0$ that satisfy $0 \leq y-x \leq \alpha$ and $0 \leq y+x \leq \beta$. Similarly, given even integers  $\alpha, \beta \in \bN$, we define  $\hat B(\alpha,\beta)$ to be  the set of vertices $(x,y)\in X^0$ that satisfy $0 \leq y-x \leq \alpha$ and $-1 \leq y+x \leq \beta-1$. A \emph{box} is a non-empty subset of $X^0$ that is congruent to either $B(\alpha,\beta)$ or $
\hat B(\alpha,\beta)$ for some $\alpha, \beta$.
The \emph{enclosing box} of a set $A$, denoted $\enc(A)$,  is the smallest box containing $A$. Examples of sets and their enclosing boxes are shown in Figure~\ref{fig:examples of minimal sets}. 

Since boxes are  parametrized by numbers $\alpha,\beta \in \bN$, 
it is  easier to determine whether a box is minimal than it is  to determine whether an arbitrary set of vertices is minimal.
Therefore, our broad strategy in solving Problem~\ref{pb:isop} is to compare an arbitrary set to its enclosing box. We show that the enclosing box can be obtained by ``saturating'' a set, i.e. by adding vertices that do not increase the boundary. In the course of our proof, we classify precisely which boxes are minimal, see Remark~\ref{rem:min boxes}.  

\begin{figure}[H]
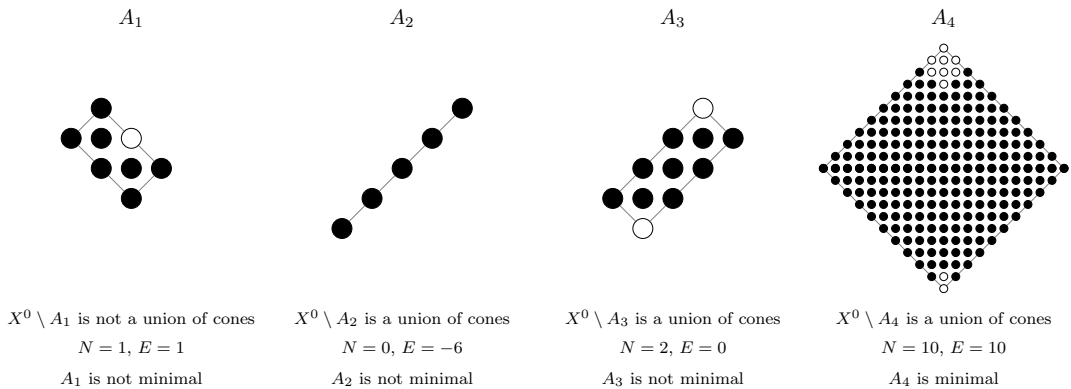
 
	\centering
	\scalebox{.8}{\tikzfig{images/minimal}}
	
	\caption{Examples of sets $A_i\subset X^0$ and their enclosing boxes. Black vertices are contained in $A_i$ and white vertices are contained in $\enc(A_i)\setminus A_i$.}
	\label{fig:examples of minimal sets}
\end{figure}

A {\it cone} is a subset of $X^0$ congruent to the set $\{(x,y) \, \mid \, y-x \geq 0, y+x \geq 0\}$.
As a precursor to our main result, we give a necessary condition for minimality: if a set $A$ is minimal, then  its complement $X^0\setminus A$ is a union of cones and furthermore, its enclosing box $\enc(A)$ is also minimal.
Although this is  far from a complete classification of minimal sets --- which we give in Theorem~\ref{thm:minimal set characterization intro} --- it demonstrates the important role of the enclosing box in determining minimality.

If $A_1$ is the set shown in   Figure~\ref{fig:examples of minimal sets}, its complement $X^0\setminus A_1$ is not a union of cones. This follows as any cone containing the  white vertex in $\enc(A_1)\setminus A_1$ must also contain a vertex of $A_1$, and so $X^0\setminus A_1$ is not a union of cones. Thus $A_1$ is not minimal by the preceding  necessary condition. Similarly, $A_2$ can be seen  not to be  minimal since  its enclosing box $\enc(A_2)$ is not minimal. 
However, to say whether or not $A_3$ and $A_4$ are minimal is a slightly  more delicate matter, since both $A_3$ and $A_4$ satisfy the preceding necessary condition for minimality. To see why $A_3$ is not minimal and $A_4$ is, we  use a numerical invariant of a box called its excess. 

Denoted $\Exc(B)$,  the \emph{excess} of a box $B$ measures how much larger a box is than the smallest minimal set $A$ with $\lvert\partial A\rvert=\lvert\partial B\rvert$; see Definition~\ref{def:excess}. In particular, the excess of a box is non-negative if and only if the box is minimal.
An explicit formula for the excess of a box is given in Theorem~\ref{thm:box_excess}. If a box has width $r-k$ and length $r+k$, then its excess is approximately $\frac{r-k^2}{2}$.
Thus, boxes that are sufficiently close to being squares have positive excess, whilst boxes that are sufficiently long and narrow have negative excess.

Our main theorem, stated below, gives two related characterizations of minimal sets in terms of their enclosing boxes. 
\newcommand{\mainthm}{

}
\begin{thmAlph}[Theorem \ref{thm:minimal set characterization_refined}]\label{thm:minimal set characterization intro}
	Let $A\subset X^0$ with $N\coloneqq \lvert \enc(A)\setminus A \rvert$ and $E:=\Exc(\enc(A))$. Then the following are equivalent:
	\begin{enumerate}
		\item $A$ is minimal;
		\item $\lvert \partial A\rvert = \lvert \partial (\enc(A))\rvert$ and $N \le E$;
		\item $X^0\setminus A$ is a union of cones and $N \le E$.
	\end{enumerate} 
\end{thmAlph}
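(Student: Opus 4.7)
The plan is to establish the equivalences via the chain $(1) \Rightarrow (2) \Leftrightarrow (3) \Rightarrow (1)$. The middle equivalence hinges on a saturation lemma, while the outer implications also draw on the preceding necessary condition for minimality and the excess formula of Theorem~\ref{thm:box_excess}.

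For $(1) \Rightarrow (2)$, assume $A$ is minimal. The stated necessary condition immediately gives that $\enc(A)$ is minimal (so $E \geq 0$) and that $X^0\setminus A$ is a union of cones. A \emph{saturation lemma}---asserting that when the complement of $A$ is a union of cones one can add the vertices of $\enc(A)\setminus A$ one at a time (outermost cone-corners first) without ever increasing the boundary---then gives $|\partial A| = |\partial \enc(A)|$; call this common value $b$. Since $A$ is a minimal set satisfying $|\partial A| = b$, the smallest minimal set with boundary $b$ has size at most $|A| = |\enc(A)| - N$; by the definition of excess, this rearranges to $|\enc(A)| - E \leq |\enc(A)| - N$, i.e.\ $N \leq E$.

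For $(2) \Leftrightarrow (3)$, the saturation lemma above can be upgraded to an equivalence. The equality $|\partial A| = |\partial \enc(A)|$ forces every step of a saturation $A\to \enc(A)$ to preserve the boundary, which forces each newly added vertex to have exactly one neighbor outside the current set. This local ``corner-addition'' constraint is globally equivalent to the cone condition on $X^0\setminus A$, since any vertex of $\enc(A)\setminus A$ forming a non-corner in the complement would contribute strictly more boundary upon being absorbed. The shared clause $N \leq E$ transfers between (2) and (3) with no further work.

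For $(3) \Rightarrow (1)$, set $n = |A|$ and $b = |\partial A| = |\partial \enc(A)|$. The explicit excess formula together with the classification of minimal boxes supplies a minimal set $A_{\min}\subseteq \enc(A)$ with $|A_{\min}| = |\enc(A)| - E$ and $|\partial A_{\min}| = b$. Placing $A_{\min}$ and $\enc(A)$ in the nested Wang--Wang sequence $W_1 \subset W_2 \subset \cdots$ of minimal sets, the non-decreasing sequence $m \mapsto |\partial W_m|$ is pinned at $b$ on both endpoints of the interval $[\,|\enc(A)| - E,\, |\enc(A)|\,]$, hence constant equal to $b$ throughout. The hypothesis $N \leq E$ places $n = |\enc(A)| - N$ inside this interval, so the isoperimetric minimum at size $n$ equals $b = |\partial A|$, making $A$ minimal.

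The main obstacle, in my view, is formulating and proving the saturation lemma precisely---pinning down that the boundary-preserving vertex additions correspond bijectively to cone-corner additions---and establishing the monotonicity of $m \mapsto |\partial W_m|$ (equivalently, of the isoperimetric function for $X$). Both steps hinge on the local geometry of cones and boxes and admit direct combinatorial verifications, but cleanly packaging them, particularly keeping track of the two parities of boxes $B(\alpha,\beta)$ and $\hat B(\alpha,\beta)$, is where most of the work will lie.
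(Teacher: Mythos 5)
Your closing step $(2)\Rightarrow(1)$ and the monotonicity of $m\mapsto\lvert\partial WW_m\rvert$ are fine (this is exactly Lemma~\ref{lem:bdry monotone} plus the definition of excess), but the heart of your argument rests on a ``saturation lemma'' that is false as stated. Take $A=\{(0,0),(10,0)\}$. Its complement is a union of cones (for each $v\notin A$ one of the four cones at $v$ misses both points), yet $\lvert\partial A\rvert=8$ while $\enc(A)$ has modulus $\{10,10\}$ and hence boundary of size $24$; moreover a direct check shows that \emph{every} single-vertex addition to $A$ strictly increases $\lvert\partial A\rvert$, so there is no boundary-preserving saturation and no ``outermost cone-corner'' to absorb. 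This also kills the claimed equivalence in your middle step: the cone condition on $X^0\setminus A$ does not imply $\lvert\partial A\rvert=\lvert\partial(\enc(A))\rvert$, so the clause $N\le E$ cannot ``transfer with no further work'' --- it is precisely what excludes such examples, and it has to enter the combinatorics itself. In the paper this is the role of Lemma~\ref{lem:cone char}, where $\lvert\enc(A)\setminus A\rvert\le\Exc(\enc(A))$ is played off against Lemma~\ref{lemma_exc_bound} to control how cones in the complement can meet lines of the enclosing box; that is the step your outline is missing.

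There is also a circularity in your $(1)\Rightarrow(2)$: the ``necessary condition'' you invoke (that the complement of a minimal set is a union of cones and its enclosing box is minimal) is not available at that stage. In the paper it is Lemma~\ref{lemma_removing_cone}, whose proof already uses the equivalence of (1) and (2) (Theorem~\ref{thm:minimal set characterization}) together with Lemma~\ref{lem:nested_sequence_to_enc_box}. The genuine content of $(1)\Rightarrow(2)$ is Proposition~\ref{prop:finminsat -> box} --- a \emph{minimal} saturated set is a box --- and its proof uses minimality in an essential way (via the forbidden configurations of Lemma~\ref{lemma_forbidden}, the classification of $\ell_\infty$-components of saturated sets, and the rearrangement argument of Lemma~\ref{lem:saturated_connected_components}), none of which your local ``corner-addition'' analysis replaces. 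The example above shows why: it is saturated and its complement is a union of cones, but it is neither a box nor minimal.
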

Going back to the examples in Figure~\ref{fig:examples of minimal sets}, a straightforward application of Theorem~\ref{thm:box_excess} --- the formula for the excess of a box --- tells us that $A_3$ does not satisfy $N\le E$, but  $A_4$ does. Since $X^0\setminus A_3$ and $X^0\setminus A_4$ are both unions of cones,  Theorem~\ref{thm:minimal set characterization intro} can be used to deduce that $A_3$ is not minimal and $A_4$ is  minimal.

\subsection*{Applications}

A natural question to consider is  whether minimal sets of a fixed size are  unique up to congruence. 
	More formally, $A\subset X^0$ is \emph{uniquely minimal} if $A$ is minimal  and any  minimal set containing the same number of vertices as $A$ is congruent to $A$.
 We  completely classify uniquely minimal sets:
 \newcommand{\uniquemin}{A subset of $X^0$ is uniquely minimal if and only if it is congruent to either $B(2n, 2n)$ or $B( n, n+1)$ for some $n\in \bN$.  }
\begin{thmAlph}[Theorem~\ref{thm:unique_min}]\label{thm:unique_min_intro}
	\uniquemin
\end{thmAlph}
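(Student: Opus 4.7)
The plan is to combine the characterization of minimal sets in Theorem~\ref{thm:minimal set characterization intro} with the explicit excess formula of Theorem~\ref{thm:box_excess}. A uniquely minimal set $A$ is characterized by two things: the congruence class of $\enc(A)$, and the pattern of ``corner'' vertices of $\enc(A)$ that are removed to form $A$ (the missing vertices being a union of cone-complements lying in at most four extremal corners). Uniqueness of $A$ thus breaks into two independent obstructions to produce a non-congruent minimal $A'$ with $|A'|=|A|$: a \emph{corner-swap}, redistributing the missing vertices among the corners of $\enc(A)$; and a \emph{box-swap}, replacing $\enc(A)$ by a non-congruent minimal box whose cardinality (possibly after removing up to $\Exc$ cone-complement vertices) coincides with $|A|$.

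For the ``if'' direction, I take $A$ congruent to $B(2n,2n)$ or $B(n,n+1)$ and show both obstructions fail. Using the vertex-count formulas for $B(\alpha,\beta)$ and $\hat B(\alpha,\beta)$ together with the excess formula, I verify that (i) for any minimal box $B$ non-congruent to $\enc(A)$ and any $0 \le k \le \Exc(B)$, one has $|B|-k \ne |A|$, which rules out box-swaps; and (ii) within $\enc(A)$, the only way to have $N \le \Exc(\enc(A))$ cone-removed vertices producing a set of cardinality $|A|$ is $N=0$, which rules out corner-swaps. The two families arise as those squares and near-squares where the approximate formula $\Exc \approx (r-k^2)/2$ combined with parity constraints on $B$ versus $\hat B$ leaves no nearby minimal box with matching cardinality.

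For the ``only if'' direction, I argue contrapositively: given any minimal $A$ not congruent to a set in the claimed families, I exhibit a non-congruent minimal $A'$ with $|A'|=|A|$. First, if $A \neq \enc(A)$, I use a corner-swap: the missing cone-complements can be moved between two inequivalent corners of $\enc(A)$ to produce $A'$, except in a few small or highly symmetric configurations which I dispose of individually by box-swapping to a neighbouring minimal box. This reduces the problem to $A = \enc(A)$, i.e., $A$ itself is a minimal box; I then apply a box-swap, showing via Theorem~\ref{thm:box_excess} that any minimal box $B(\alpha,\beta)$ or $\hat B(\alpha,\beta)$ outside the two listed families shares its cardinality with some non-congruent minimal box (or with a cone-truncated slightly-larger minimal box within its excess budget).

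The main obstacle is the corner-swap case analysis and the final enumeration of minimal-box cardinalities. The four corners of a box are not generally all equivalent under congruence---their local behavior depends on the parities of $\alpha,\beta$ and on whether the box is of type $B$ or $\hat B$---so a careful accounting of which corner-swaps actually yield a non-congruent set is required, and the ``accidental symmetry'' cases must be separated out and handled by box-swaps instead. On the box-swap side, isolating exactly the cardinalities $|B(2n,2n)|$ and $|B(n,n+1)|$ as the unreachable ones requires a delicate comparison of the discrete formulas for $|B|$ and $\Exc(B)$ across adjacent parameter values and across the two box types.
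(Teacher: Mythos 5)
Your framework --- distinguishing ``box-swaps'' (changing the congruence class of the enclosing box) from ``corner-swaps'' (redistributing the deleted cone-pieces within a fixed enclosing box) --- is a legitimate way to organize the problem, and your claim (i) in the ``if'' direction is in fact true. But as written the proposal has a genuine gap in the ``only if'' direction, and it misses the one observation that makes the whole argument tractable. First, the corner-swap step fails far more often than ``in a few small or highly symmetric configurations'': whenever $N=\lvert\enc(A)\setminus A\rvert=1$ and $\enc(A)=B(\alpha,\beta)$, the automorphism group of the box acts transitively on its corners, so \emph{every} single-vertex deletion yields the same congruence class and no corner-swap exists. This is an infinite family (one for each minimal box of positive excess), so it cannot be ``disposed of individually''; each member needs its own box-swap (e.g.\ to $\hat B(\alpha,\beta)$ when $\alpha,\beta$ are both even, and to something else for the other parities), and you have not supplied these. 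Second, the two deferred enumerations you yourself identify as the main obstacle --- which minimal boxes $B$ satisfy $\lvert B\rvert-k=\lvert A\rvert$ for some $0\le k\le\Exc(B)$, and which corner-patterns within a fixed box are mutually congruent --- constitute essentially the entire content of the proof, and neither is carried out.

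The paper avoids both enumerations with two observations you do not use. For ``only if'': since $WW_{\lvert A\rvert}$ is always a minimal set of size $\lvert A\rvert$, a uniquely minimal set must be congruent to $WW_{\lvert A\rvert}$; this instantly reduces the contrapositive from ``all minimal sets outside the two families'' to the explicit list of Wang--Wang sets. These are then eliminated by a single box-swap construction (Lemma~\ref{lem:wwset not uniq min}: pass from $\enc(WW_n)=B(\alpha,\beta)$ to $B(\alpha-1,\beta+1)$ and trim an extremal line using Lemma~\ref{lemma_exc_bound}), together with the observation that $B(r-1,r+1)$ duplicates the size and boundary of $B(r,r)$ for odd $r$. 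For ``if'': rather than enumerating cardinalities of minimal boxes, the paper notes that $B(2n,2n)$ and $B(n,n+1)$ are \emph{efficient} (Lemma~\ref{lem:boxefficient}), so any minimal set of the same size is also efficient, and the classification of efficient sets immediately forces congruence. I would recommend incorporating at least the Wang--Wang reduction; without it your case analysis, even if completed, would be far longer than necessary.
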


As well as understanding individual minimal sets, we also  want to understand the structure of the collection of all minimal sets. To do this, we initiate the  study of the \emph{graph $\mathcal{G}$ of minimal sets}. Vertices of $\mathcal{G}$ are congruence classes  of minimal sets. Two vertices $v$ and $w$ in $\cG$ are joined by an edge if there exist representative minimal sets  $A\in v$ and $B\in w$ whose symmetric difference  has size one. The graph $\mathcal{G}$ has a natural grading corresponding to the sizes of representative minimal sets. The induced subgraphs of $\cG$ containing all  congruence classes of minimal sets of size at most 10 and 41 are shown in Figures~\ref{fig:graph} and~\ref{fig:graph_41} respectively.

\begin{figure}[p] 
	\vspace{.5in}
	\centering
	\scalebox{.4}{\tikzfig{images/graph}}
	
	\caption{The induced subgraph of $\cG$ containing congruence classes of minimal sets of size at most 10. All boxes are labelled using Notation~\ref{notation:boxes}.}\label{fig:graph}
\end{figure}

\begin{figure}[p]
	\vspace{.5in}
	\begin{subfigure}[t]{0.47\textwidth}
		\includegraphics[scale=.34]{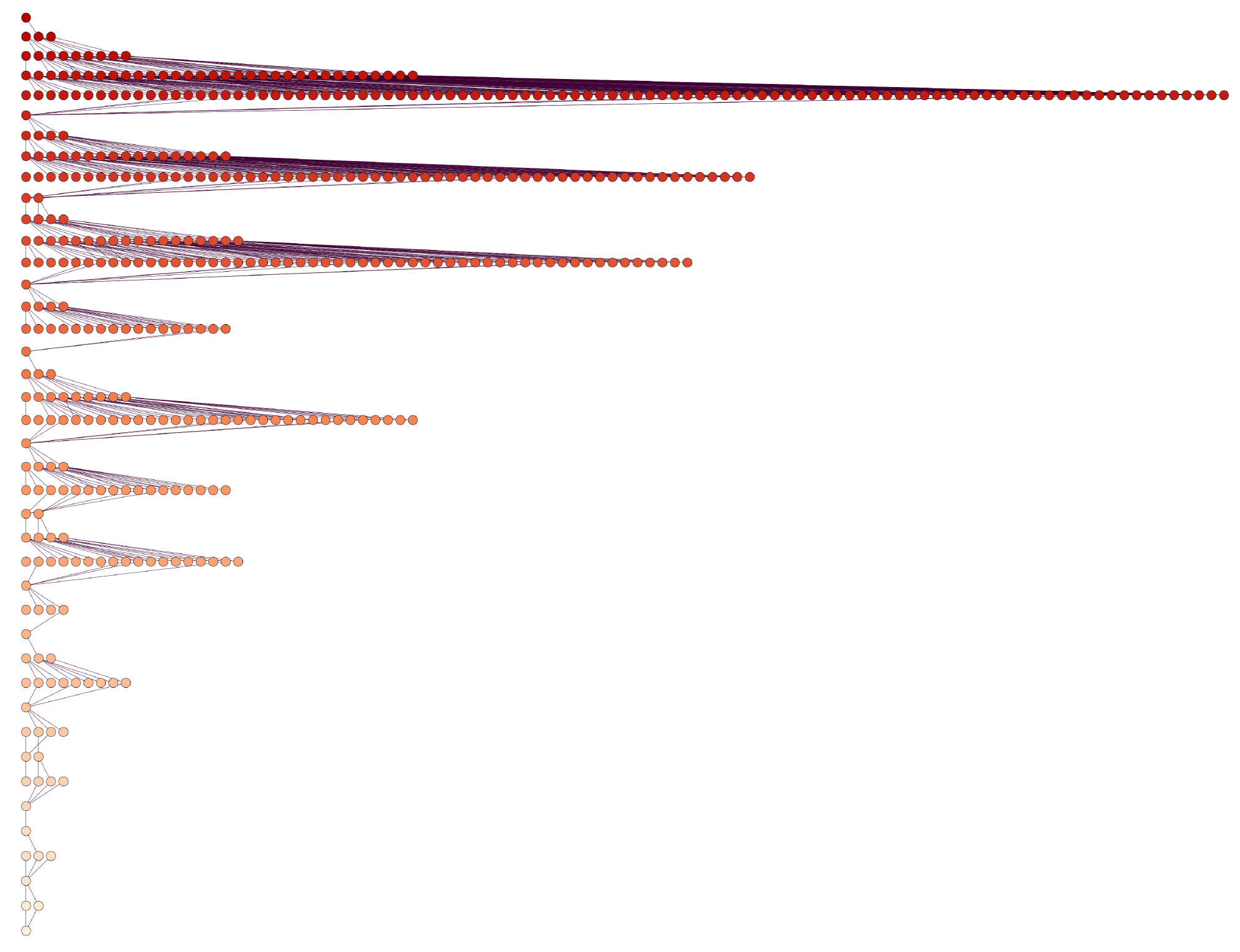}
		\caption{
		The induced subgraph of $\mathcal{G}$ of congruence classes of minimal sets of size up to 41. The graded structure of the graph is shown where vertices representing sets of larger sizes appear above and are shaded with a darker color than those representing smaller sizes.
		}
	\end{subfigure}
	\hfill
	\begin{subfigure}[t]{0.47\textwidth}
		\includegraphics[scale=.32]{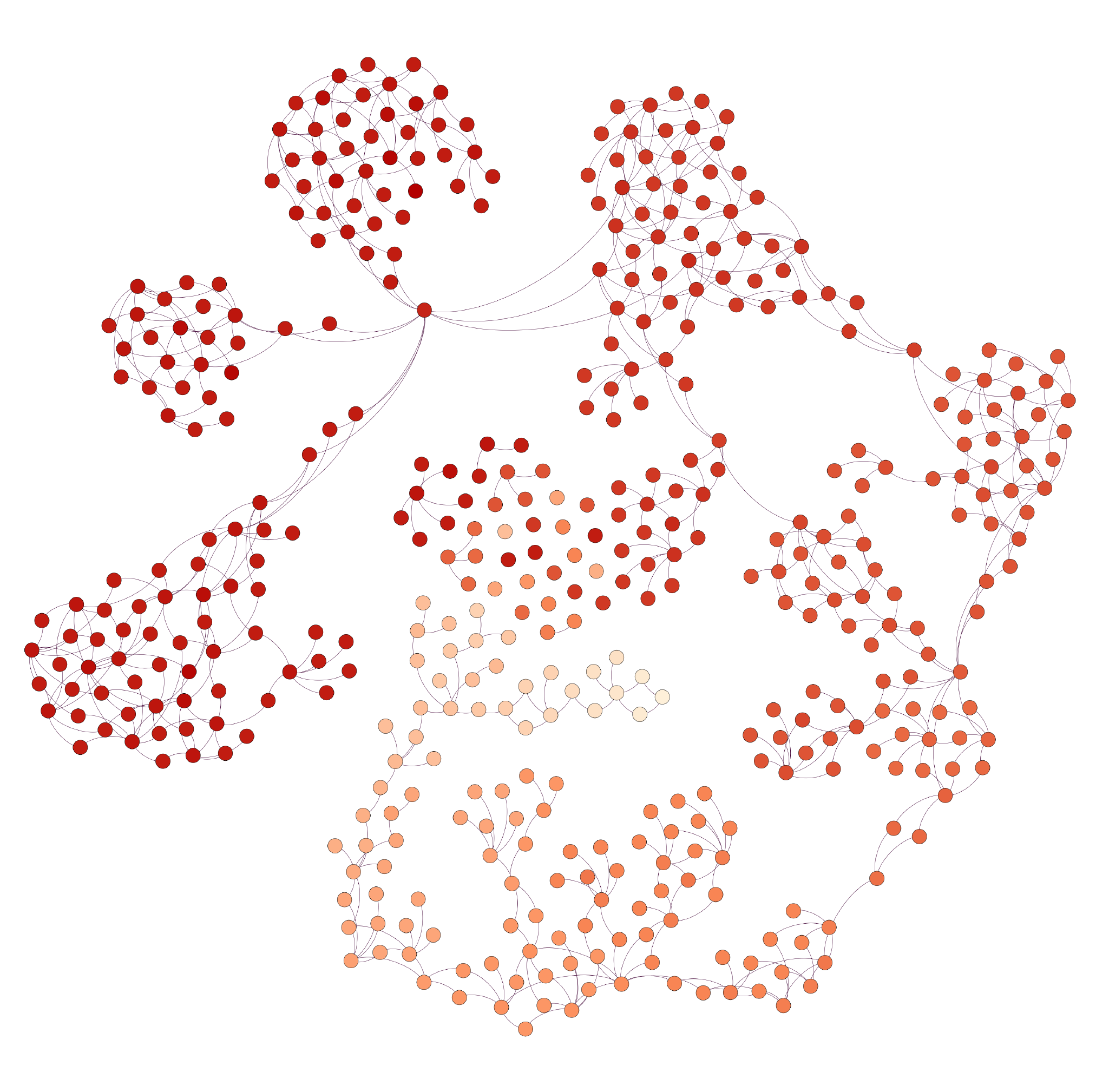}
		\caption{
		The same graph as on the left displayed here in a different layout. Isolated vertices and finite size components are towards the center of the graph. The darkness of the shading of vertices is still proportional to the size of the corresponding sets.
		}
	\end{subfigure}
	\caption{}
	\label{fig:graph_41}
\end{figure}

We exhibit the following features of $\mathcal{G}$: 
\begin{thmAlph}
Let $\mathcal{G}$ be the graph of minimal sets. Then:
\begin{enumerate}
	\item $\cG$ contains a single infinite component (Corollary~\ref{cor_one_inf_comp});
	\item $\cG$ contains finite components of arbitrarily large height (Theorem~\ref{cor:large_comps}), where the \textit{height} of a component is the maximal length of a nested sequence of minimal sets in it;
	\item $\cG$ contains infinitely many isolated vertices (Theorem~\ref{cor:large_comps}).
\end{enumerate}  
\end{thmAlph}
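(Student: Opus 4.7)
The plan for all three parts is to leverage the characterization of minimal sets via their enclosing box (Theorem~\ref{thm:minimal set characterization intro}) together with the classification of uniquely minimal sets (Theorem~\ref{thm:unique_min_intro}) and the excess formula. Each single-vertex move in $\cG$ must preserve both the condition that the complement in $X^0$ is a union of cones and the inequality $N \le E$, so understanding when these conditions are robust or rigid dictates the local structure of $\cG$.

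\textbf{Part (1): one infinite component.} First I would exhibit an explicit infinite path in $\cG$ that threads through the uniquely minimal boxes $B(n,n+1)$ and $B(2n,2n)$ of Theorem~\ref{thm:unique_min_intro}. Starting from $B(n,n+1)$, vertices may be added one at a time along an appropriate edge of the enclosing box until the box grows to $B(n+1,n+1)$ and then to $B(n+1,n+2)$; at each step the complement of the intermediate set in $X^0$ remains a union of cones, and $N \le E$ is preserved because $E$ is much larger than $N$ for near-square boxes. This yields an infinite connected subgraph $\cC_\infty \subset \cG$. For uniqueness of the infinite component, I would show that every minimal set $A$ is connected in $\cG$ to $\enc(A)$ by adding the vertices of $\enc(A) \setminus A$ one at a time in a ``saturation order'' (outer corners first), ensuring each intermediate set remains minimal. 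Any infinite component then contains a box, and a further moves argument connects this box to a uniquely minimal box in $\cC_\infty$.

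\textbf{Part (3): infinitely many isolated vertices.} I would take boxes $B$ with $\Exc(B) = 0$; by the quadratic nature of the excess formula, an infinite family of such $B$ exists. For such $B$, the condition $N \le E = 0$ forces $N = 0$, so the unique minimal set with enclosing box $B$ is $B$ itself. Removing an interior vertex of $B$ yields $N = 1 > 0$ and kills minimality; removing a corner of $B$ strictly shrinks the enclosing box to a sub-box of negative excess, again non-minimal. Adding a vertex $v$ outside $B$ produces an enclosing box $B'$ in which $B' \setminus (B \cup \{v\})$ fails to be a union of cones, since all but one vertex of the new boundary strip are absent. Hence $B$ is an isolated vertex in $\cG$, and there are infinitely many non-congruent such $B$.

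\textbf{Part (2): arbitrarily large finite components.} The same rigidity idea applies, but now with boxes $B$ where $\Exc(B) = E$ is large and positive while neighboring boxes (obtained by adjusting $\alpha$ or $\beta$ by $\pm 1$) remain non-minimal. The minimal sets with enclosing box $B$ are then precisely the sets obtained from $B$ by deleting up to $E$ vertices in configurations whose complement in $X^0$ is a union of cones; these form a subgraph of $\cG$ of height on the order of $E$, obtained by peeling corners one at a time. Disconnectedness from the rest of $\cG$ follows because any single-vertex move that changes the enclosing box lands on a non-minimal set by the choice of $B$. The main obstacle I foresee is the uniqueness step in Part~(1): proving that an \emph{arbitrary} minimal set can be walked along legal edges to a canonical box. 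This will require a careful case analysis showing that one can always add or remove a vertex at an extremal cone tip while preserving both the cone structure and the $N \le E$ bound.
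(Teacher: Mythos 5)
Your overall strategy for parts (2) and (3) --- rigidity of boxes with zero or controlled excess --- is in the same spirit as the paper's, but there are two genuine gaps. First, in part (1) your uniqueness argument reduces to connecting ``a box contained in an infinite component'' to a uniquely minimal box by ``a further moves argument''. No such argument can exist for arbitrary boxes: parts (2) and (3) themselves assert that many boxes are isolated or lie in finite components, so the mere fact that a component contains boxes does not let you walk from one of them to $B(2n,2n)$, and you give no mechanism that exploits the infiniteness of the component. The paper avoids this entirely: since there are only finitely many congruence classes of each size and the gradings occurring in a connected component form an interval of integers, an infinite component must contain a vertex of grading $\lvert B(2n,2n)\rvert$ for every sufficiently large $n$, and unique minimality (Theorem~\ref{thm:unique_min_intro}) forces that vertex to \emph{be} $B(2n,2n)$; two infinite components would therefore share a vertex. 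You should replace your ``further moves'' step with this pigeonhole argument.

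Second, in part (3) the reason you give for why $B\cup\{v\}$ is never minimal is incorrect. When $v$ is adjacent to an extremal line of $B$, the complement $X^0\setminus(B\cup\{v\})$ typically \emph{is} still a union of cones: the missing vertices of the new boundary strip are each covered by a sideways-opening cone whose extremal rays of slope $\pm 1$ miss both $B$ and $v$. The true obstruction is numerical: the new enclosing box $B'$ satisfies $N=\lvert L'\cap B'\rvert-1\gg \Exc(B')$, which is exactly the statement that $B$ is \emph{dead} because it is inefficient (Theorem~\ref{thm:dead_char2}); your argument never invokes this. Relatedly, $\Exc(B)=0$ alone does not make $B$ isolated --- $B(0,2)$ has excess $0$ yet is adjacent in $\cG$ to the one-point set --- so you must also require every standard line to meet $B$ in at least two vertices, as in the paper's family $B(k^2+k,k^2-k)$; your claim that deleting a corner shrinks the enclosing box to a negative-excess sub-box is false for such thick boxes and unnecessary once the thickness condition is imposed. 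The same two missing ingredients (deadness of $B$, plus every standard line meeting $B$ in at least $\Exc(B)+2$ vertices) are what make part (2) work: the paper packages them as Proposition~\ref{thm:isolated_comps} and instantiates with $B(2l^3+l^2+l,\,2l^3+l^2-l)$, which has excess $l^3$ and hence lies in a finite component of height $l^3+1$.
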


We note that the  infinite sequence of nested minimal sets constructed by Wang--Wang is contained in the unique infinite component of $\mathcal{G}$. 
\vspace{.4cm} 

At the beginning of the introduction we mentioned two equivalent formulations of the isoperimetric problem in the Euclidean plane: maximizing the \emph{area} enclosed by a  curve of fixed \emph{length}, or minimising the \emph{length} of a curve enclosing a  fixed \emph{area}. We thus consider the following discrete isoperimetric problem dual to Problem~\ref{pb:isop}:
\begin{pb}\label{pb:isopdual}
	Amongst all subsets of $X^0$ with a fixed vertex boundary, characterize those with maximal size.
\end{pb}
We say a subset of $X^0$ is an \emph{efficient set} if it is a  solution to Problem~\ref{pb:isopdual}. It turns out that Problems~\ref{pb:isop} and~\ref{pb:isopdual} are \emph{not} equivalent for the integer lattice. Every efficient set is minimal (see Lemma~\ref{lem:dead_sat+box}), but it is not the case that every minimal set is efficient. We  give an explicit solution to Problem~\ref{pb:isopdual}.
\begin{thmAlph}[Lemma~\ref{lem:boxefficient}]\label{thm:efficient_intro}
	A subset of $X^0$ is efficient if and only if  it is congruent to either $B(n,n)$, $B(n,n+1)$ or $B(2n,2n+2)$ for some $n\in \bN$.
\end{thmAlph}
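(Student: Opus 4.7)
The plan is first to reduce the problem to boxes, and then to classify which boxes are efficient.

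\textbf{Reduction to boxes.} Suppose $A \subset X^0$ is efficient. By Lemma~\ref{lem:dead_sat+box}, $A$ is minimal, so by Theorem~\ref{thm:minimal set characterization intro}(2), $|\partial A| = |\partial \enc(A)|$. Moreover, $\enc(A)$ is itself minimal (the necessary condition for minimality stated in the introduction just before Theorem~\ref{thm:minimal set characterization intro}), so it is a valid competitor to $A$ with the same boundary and cardinality $|A| + |\enc(A) \setminus A| \ge |A|$. Efficiency of $A$ forces $|\enc(A) \setminus A| = 0$, so $A = \enc(A)$ is a box.

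\textbf{Classifying efficient boxes.} I would change coordinates to $(u,v) = (y-x, y+x)$, so that $B(\alpha, \beta)$ becomes the set of $(u,v) \in [0,\alpha] \times [0,\beta]$ with $u + v$ even, and edges become diagonal steps $(u, v) \to (u \pm 1, v \pm 1)$. A direct count gives
\[
|B(\alpha, \beta)| = \left\lceil \frac{(\alpha+1)(\beta+1)}{2} \right\rceil, \qquad |\hat B(\alpha, \beta)| = \frac{(\alpha+1)(\beta+1) - 1}{2},
\]
and a slightly more involved count shows that the vertex boundary of either box equals $\alpha + \beta + 4$ in all non-degenerate cases. Since $|\hat B(\alpha, \beta)| < |B(\alpha, \beta)|$ for matching parameters, no $\hat B$-box is efficient, so the task reduces to maximizing $\lceil (\alpha+1)(\beta+1)/2 \rceil$ subject to $s := \alpha + \beta$ fixed. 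By AM-GM, this product is largest when $|\alpha - \beta|$ is smallest, and the ceiling contributes an extra $+1$ precisely when both $\alpha, \beta$ are even. A case analysis on $s \bmod 4$ then identifies the efficient boxes as $B(n, n+1)$ (for $s$ odd, where mixed parity is forced), $B(n, n)$ with $n$ even (for $s \equiv 0 \pmod 4$, gaining the $+1$ parity bonus), and the tied pair $B(n, n)$ with $n$ odd together with $B(2m, 2m+2)$ (for $s \equiv 2 \pmod 4$, where the square loses the bonus but still ties exactly with the next-closest box that gains it). Together these give precisely the three families $B(n, n)$, $B(n, n+1)$, and $B(2n, 2n+2)$.

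The main obstacle is the parity bookkeeping in this final step: one must verify that the tie in the $s \equiv 2 \pmod 4$ case is an exact equality (so that both classes really are efficient), rule out all $B(\alpha, \beta)$ with $|\alpha - \beta| \ge 3$ in every subcase, and handle the small degenerate parameter values (for example $B(0,1) = B(0,0)$) where different $(\alpha, \beta)$ produce the same box and the nominal formula $\alpha + \beta + 4$ requires adjustment.
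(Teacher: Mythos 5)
Your argument is correct in substance but takes a genuinely different route from the paper's. The paper first invokes Lemma~\ref{lem:dead_sat+box} (efficient $\Rightarrow$ saturated $\Rightarrow$ box, via Proposition~\ref{prop:finminsat -> box}), and then classifies efficient boxes by comparing \emph{excesses} using the closed formula of Theorem~\ref{thm:box_excess}; since by Lemma~\ref{lemma_def_same_boundary} comparing excesses of equal-boundary sets is the same as comparing cardinalities, the parity bookkeeping you describe is essentially isomorphic to the paper's, but your version bypasses Theorem~\ref{thm:box_excess} and the Wang--Wang excess computations of Lemma~\ref{lem:special box1 excess} entirely, working directly from the area and boundary formulas (your $\lceil(\alpha+1)(\beta+1)/2\rceil$ agrees with Lemma~\ref{lem:area}, and $\alpha+\beta+4$ with Lemma~\ref{lem:bdry}). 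Your reduction to boxes via the enclosing box is also a legitimate shortcut: in fact you do not even need minimality of $\enc(A)$, only that $|\partial A|=|\partial(\enc(A))|$ and $A\subseteq\enc(A)$, so efficiency forces $A=\enc(A)$. The tie analysis in the $s\equiv 2\pmod 4$ case checks out: $|B(n,n)|=(n^2+2n+1)/2=|B(n-1,n+1)|$ for $n$ odd, and the $\hat B$ boxes are excluded by Remark~\ref{rmk:B vs B hat}.

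One point needs to be shored up for the converse direction. Showing that a box maximizes cardinality \emph{among boxes} with a given boundary does not by itself show it is efficient; you must rule out a non-box competitor of the same boundary and strictly larger size. This follows from your own reduction once you observe that the maximum of $|C|$ over all sets $C$ with $|\partial C|=n$ is finite and attained (by Lemma~\ref{lem:bdry monotone}, since $|\partial WW_m|\to\infty$, any sufficiently large set has boundary exceeding $n$), so an efficient set with that boundary exists, is a box, and hence the global maximum equals the maximum over boxes. The paper instead proves the converse by a separate direct argument: Wang--Wang boxes are efficient because they are saturated and $WW_{n+1}$ is minimal with strictly larger boundary, and $B(m-1,m+1)$ is efficient because it has the same size and boundary as $B(m,m)$. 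Either completion works; yours just needs that one attainment sentence made explicit, together with the degenerate-parameter caveats you already flag.
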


While writing this article, we learned that Theorem~\ref{thm:efficient_intro} was essentially already known.
Vainsencher--Bruckstein characterize sets that  are both efficient and minimal, which they call \emph{Pareto optimal} sets \cite{vainsencher2008isop}. However, by Wang--Wang's result and an easy argument, it follows that efficient sets are always minimal, giving the above theorem. We note that our proof is independent of that of Vainsencher--Bruckstein.
\vspace{.4cm}

The \emph{life expectancy} of a minimal set $A\subset X^0$ is defined to be the maximum 
over all $n$ such that there exists nested sequence $(A_i)_{i=0}^n$ of minimal sets with $\lvert A_i\rvert =\lvert A\rvert +i$ and $A_0=A$. The life expectancy takes values in $\bN\cup \{\infty\}$. A minimal set is said to be \emph{mortal} if it has finite life expectancy and \emph{immortal} otherwise. A minimal set is said to be \emph{dead} if its life expectancy is zero. 
We completely characterize mortal and dead sets:
\begin{thmAlph}Let  $A\subset X^0$ be a minimal set. Then:
	\begin{enumerate}
		\item 
		$A$ is dead  if and only if it is a box that is not efficient (Theorem~\ref{thm:dead_char2}). 
		\item $A$ is mortal if and only if its enclosing box is dead (Proposition~\ref{thm:mortalchar}).
	\end{enumerate}
\end{thmAlph}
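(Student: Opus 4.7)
The strategy combines Theorem~\ref{thm:minimal set characterization intro} (the minimal set characterization), the box excess formula of Theorem~\ref{thm:box_excess}, and a saturation step that extends a non-box minimal set by a vertex inside its enclosing box.

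For part (1), I first establish that a non-box minimal set cannot be dead. Given such an $A$, we have $N := |\enc(A)\setminus A| \ge 1$ and $E := \Exc(\enc(A)) \ge 1$ by Theorem~\ref{thm:minimal set characterization intro}. Using the union-of-cones decomposition of $X^0\setminus A$, I select $v \in \enc(A)\setminus A$ to be the tip of one of the cones, so that $X^0\setminus(A\cup\{v\})$ remains a union of cones. Since $\enc(A\cup\{v\}) = \enc(A)$, we get $N' = N-1 \le E' = E$, and $A\cup\{v\}$ is minimal by Theorem~\ref{thm:minimal set characterization intro}(3). Hence any dead minimal set must be a box. For an efficient box (one of the types listed in Theorem~\ref{thm:efficient_intro}) I exhibit the next set in the Wang--Wang sequence as an explicit minimal extension and verify it via the excess formula, so efficient boxes are not dead. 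Conversely, for a non-efficient box $B$ and any candidate vertex $v$, a direct calculation using Theorems~\ref{thm:box_excess} and~\ref{thm:efficient_intro} yields $\Exc(\enc(B\cup\{v\})) < |\enc(B\cup\{v\})|-|B|-1$, so $B\cup\{v\}$ fails the criterion of Theorem~\ref{thm:minimal set characterization intro}(2) and is not minimal. This completes part (1).

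For part (2), the key auxiliary lemma I would prove is: if $A$ is minimal, $v \notin \enc(A)$, and $A \cup \{v\}$ is also minimal, then $A = \enc(A)$, i.e.\ $A$ is a box. The reason is that in any cone decomposition of $X^0\setminus(A\cup\{v\})$, cones containing $v$ must point outward away from $\enc(A)$, preventing the remaining cones from simultaneously covering both $\enc(A)\setminus A$ and the new vertices in $\enc(A\cup\{v\})\setminus(\enc(A)\cup\{v\})$. Making this cone analysis precise is the main obstacle of the proof.

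With this lemma the reverse direction of (2) is immediate. Assume $\enc(A)$ is dead and suppose for contradiction there is a chain $A = A_0 \subset A_1 \subset \ldots \subset A_k$ of minimal sets with $k > |\enc(A)|-|A|$. Then some $A_i$ contains a vertex outside $\enc(A)$; let $i$ be the smallest such index. Applying the lemma to the step $A_{i-1} \to A_i$ forces $A_{i-1}$ to be a box, and since $A \subseteq A_{i-1} \subseteq \enc(A)$ we obtain $A_{i-1} = \enc(A)$. But then $A_i$ is a minimal extension of $\enc(A)$, contradicting deadness; hence $A$ is mortal. For the forward direction, if $\enc(A)$ is not dead I saturate $A$ to $\enc(A)$ via part (1), extend once more using non-deadness, and iterate. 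The remaining inductive step --- that each successive enclosing box in the chain stays non-dead --- should follow by comparison with the Wang--Wang nested sequence of minimal sets, or by direct monotonicity in the excess formula of Theorem~\ref{thm:box_excess}, yielding an infinite chain starting at $A$ and hence immortality.
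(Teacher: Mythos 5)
Your overall skeleton is right, but there are genuine gaps at the two places where the real work lives, and in both cases the paper has a much shorter route that you are not using. For part (1), the direction ``non-efficient box $\Rightarrow$ dead'' is where your plan is weakest: you propose to verify $\Exc(\enc(B\cup\{v\})) < |\enc(B\cup\{v\})|-|B|-1$ ``by direct calculation for any candidate vertex $v$,'' but this is an unexecuted case analysis over all positions of $v$ relative to $B$ (and over the several possible enclosing boxes that result), whereas the paper's argument is two lines: a box is saturated (Lemma~\ref{lem:box_is_sat}), so $|\partial(B\cup\{v\})|>|\partial B|$ for every $v$, while inefficiency supplies a strictly larger set with boundary $|\partial B|$, and Lemma~\ref{lem:bdry monotone} then kills minimality of $B\cup\{v\}$. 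Likewise, ``dead $\Rightarrow$ box'' is immediate from saturation plus Proposition~\ref{prop:finminsat -> box}; your cone-apex route can be made to work, but you must choose $v$ extremal in its cone (as in the proof of Theorem~\ref{thm:minimal set characterization_refined}) and then still check that \emph{every other} cone of the decomposition can be shrunk to avoid $v$ --- none of which is spelled out. Finally, your claim that ``the next set in the Wang--Wang sequence'' extends every efficient box is false for $B(m-1,m+1)$, which is not in the Wang--Wang chain; there one must add a specific vertex such as $(0,m)$, as in the proof of Theorem~\ref{thm:mortalchar}.

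For part (2), your key lemma is true but is already a consequence of Lemma~\ref{lem:nested_sequence_to_enc_box} (every maximal nested minimal chain from $A$ passes through $\enc(A)$ before the boundary can grow), so the ``main obstacle'' you identify need not be attacked by a fresh cone analysis. The more serious problem is the forward direction: the inductive claim that ``each successive enclosing box in the chain stays non-dead'' is \emph{false} for arbitrary minimal one-vertex extensions. For example, for odd $m$ the set $B(m,m+1)\cup\{v\}$, with $v$ chosen on the long side, is minimal (one checks $|\partial(B(m,m+1)\cup\{v\})|=|\partial B(m,m+2)|=m+m+6$ and $N=E=\tfrac{m-1}{2}$ via Theorem~\ref{thm:box_excess}), yet its enclosing box $B(m,m+2)$ is inefficient, hence dead by Theorem~\ref{thm:dead_char2}; so a carelessly extended chain starting from the efficient box $B(m,m+1)$ terminates. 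Immortality therefore cannot be obtained by ``extend using non-deadness and iterate''; you must exhibit one specific infinite chain, which is exactly what the paper does: Wang--Wang boxes lie on the infinite chain $(WW_n)$, and $B(m-1,m+1)$ is fed into that chain by the explicit step $B(m-1,m+1)\cup\{(0,m)\}$, whose enclosing box is the Wang--Wang box $B(m,m+1)$ (Remark~\ref{rmk:wang_wang_boxes}). Until you commit to such a chain, the immortality half of part (2) is not proved.
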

We note that by Theorem~\ref{thm:efficient_intro} we get an explicit characterization of mortal and dead sets in terms of box parametrizations.
%\mymarpar{Ivan: Reworded this sentence.}
\vspace{.4cm}

A set $A\subset X^0$ is \emph{connected} if its induced subgraph is connected. It can be seen in Figure~\ref{fig:graph} that the box $B(0,2)$ is minimal but is not connected. 
We show that, up to congruence,  this is the only minimal set that is not connected.
 \newcommand{\connected}{A minimal set in $X$ is connected if and only if it is not congruent to $B(0,2)$.}
\begin{thmAlph}[Theorem~\ref{thm:connected}]
\connected
\end{thmAlph}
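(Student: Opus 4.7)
\emph{Proof plan.} The forward direction is immediate: $B(0,2)=\{(0,0),(1,1)\}$ consists of two vertices at $\ell_1$-distance two, which are non-adjacent in $X$, so $B(0,2)$ is disconnected. For the converse, suppose $A$ is a minimal set with $A\not\cong B(0,2)$; the plan is to show $A$ is connected. By Theorem~\ref{thm:minimal set characterization intro}, the enclosing box $\enc(A)$ is itself minimal and $X^0\setminus A$ is a union of cones. It is convenient to pass to rotated coordinates $(u,v)=(y-x,y+x)$: in these coordinates $\enc(A)$ becomes a rectangle restricted to the parity-even sublattice $\{u+v\text{ even}\}$, edges of $X$ become diagonal steps $(u,v)\mapsto(u\pm 1,v\pm 1)$, and cones become axis-aligned quarter-planes. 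Consequently $\enc(A)\setminus A$ is a union of at most four ``corner sub-rectangles'' of $\enc(A)$.

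I split into cases based on the shape of $\enc(A)$. In the \emph{degenerate case} $\enc(A)\cong B(0,\beta)$, the enclosing box is a set of isolated lattice points along a diagonal. A direct comparison of $B(0,\beta)$ with an $\ell_1$-geodesic of the same size --- equivalently, the excess formula (Theorem~\ref{thm:box_excess}), which gives $\Exc(B(0,\beta))<0$ once $\beta\ge 4$ --- forces $|B(0,\beta)|\le 2$ when $\enc(A)$ is to be minimal. Thus $\enc(A)$ is either a single vertex, in which case $A$ is too and is connected, or $\enc(A)=B(0,2)$; in the latter case $A$ must contain both extreme vertices of $\enc(A)$, forcing $A=B(0,2)$, contrary to assumption.

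In the \emph{non-degenerate case}, both the $u$- and $v$-extents of $\enc(A)$ are at least one, so $\enc(A)$ is itself a connected subgraph of $X$. Write $A=\enc(A)\setminus R$, where $R$ is a union of at most four corner sub-rectangles, subject to two constraints: (i) removing $R$ does not shrink the enclosing box (so $A$ still attains each extreme $u$- and $v$-value of $\enc(A)$), and (ii) minimality gives $|R|\le\Exc(\enc(A))$. The main obstacle is to show that, under these constraints, $A$ cannot be disconnected. The plan is to argue by contradiction: a disconnection of $A$ would force $R$ to contain a cut of $\enc(A)$; lower-bounding the size of such a corner-structured cut in terms of the side lengths of $\enc(A)$ and combining with the explicit excess formula (Theorem~\ref{thm:box_excess}) should contradict (ii). The principal combinatorial work lies in making this quantitative comparison precise across all cone configurations and box parameters. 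An alternative, possibly slicker, route is to directly construct a competitor $A'$ of the same size but strictly smaller boundary by translating one connected component of $A$ toward another, contradicting minimality without invoking the excess formula.
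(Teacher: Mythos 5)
Your forward direction and your degenerate case $\enc(A)\cong B(0,\beta)$ are correct, and your overall framing (compare $A$ to its enclosing box, control $\enc(A)\setminus A$ by cones, bound its size by the excess) is the right one. But the non-degenerate case --- which is where the entire difficulty of the theorem lives --- is left as a plan rather than a proof. You assert that a disconnection of $A$ would force the removed set $R$ to contain a cut of $\enc(A)$ whose size can be lower-bounded against $\Exc(\enc(A))$, and then defer ``the principal combinatorial work'' of making this precise. That comparison is not routine: $R$ is not ``a union of at most four corner sub-rectangles'' but a union of arbitrarily many corner rectangles (one per cone), i.e.\ a staircase region at each corner, and one must rule out, for instance, two opposite corner staircases jointly covering a full standard-line cross-section of the box while still satisfying $|R|\le\Exc(\enc(A))$ and not shrinking the enclosing box. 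This is exactly the kind of estimate the paper's Lemma~\ref{lemma_exc_bound}(2)--(3) and Lemma~\ref{lem:cone char} are built to handle, and without it your argument does not close. Your suggested alternative --- translating one component of $A$ toward another to strictly decrease the boundary --- also fails as stated: two $\ell_1$-components can be diagonally adjacent (sharing boundary vertices), in which case the translation need not strictly decrease $|\partial A|$; indeed $B(0,2)$ itself is the counterexample, and the paper only runs such a translation argument for $\ell_\infty$-components of \emph{saturated} sets (Lemma~\ref{lem:saturated_connected_components}).

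Two further cautions. First, you invoke ``minimal $\Rightarrow$ $X^0\setminus A$ is a union of cones'' as a black box, but in the paper this implication (Lemma~\ref{lemma_removing_cone}) is proved by an induction that \emph{simultaneously} establishes the connectivity statement you are trying to prove --- the cone property for $A'$ is deduced using the connectedness of the one-larger set $A$ --- so relying on it here is circular relative to the paper's logical development. Second, for contrast: the paper avoids any global cut-versus-excess estimate by inducting on $\lvert\enc(A)\setminus A\rvert$, removing one vertex $v$ at a time and using Lemma~\ref{lem:cone char} to show that the three neighbors $(-1,-1),(0,-1),(1,-1)$ below $v$ all remain in the smaller set, so that any path through $v$ can be locally rerouted. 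Supplying an analogue of that local rerouting step (or carrying out your quantitative cut estimate in full, for all configurations of corner staircases and all box moduli) is what is needed to complete your argument.
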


\subsection*{Other Related Works} \label{subsec:other_works}

A complete solution to Problem~\ref{pb:isop} is known for very few graphs.  
Much of the literature has focused on   
exhibiting a \emph{sequence  of minimal sets}, i.e. a sequence  $(A_n)$ where  each $A_n$ is a minimal set consisting of exactly $n$ vertices.
Finding such a sequence is NP-hard for a general graph (see \cite{harper-book})
%\cite{brandesfleischer, gareyjohnson, gareyjohnsonstockmeyer},
and such sequences can  typically be described only  in special cases for graphs with an abundance of symmetry. 

Harper exhibited a nested sequence of minimal sets  for the $d$-dimensional hypercube $Q_d$, where $Q_d$ is a graph on the vertex set $\{0,1\}^d$ with an edge between a pair of binary strings that differ in a single coordinate  \cite{harper66}.  
This result was extended to $(P_q)^d$, the $d$-fold product of paths on $q$ vertices  \cite{chvatalova, moghadam, bollobasleader91} 
and to $(K_q)^d$, the $d$-fold product of complete graph on $q$ vertices  \cite{harper99}. In constrast with $Q_d$, there is no nested sequence of minimal sets for $(K_q)^d$.   

The isoperimetric problem has also been studied on infinite graphs, including integer lattices. Let $\Z^d_{\ell_1}$ (respectively $\Z^d_{\ell_\infty}$) be the graph on the vertex set $\Z^d$ where two vertices are joined by an edge if their $\ell_1$-distance (respectively $\ell_\infty$-distance) is 1.
As already mentioned, Wang--Wang \cite{wangwang77}  exhibit a nested sequence of minimal sets in $\Z^d_{\ell_1}$. Sieben gives a formula  for the size of the vertex boundary of a minimal set of size $n$ in this graph under the assumption that such sets are connected \cite{sieben2008polyominoes}. This is then used to analyze strategies for what are called polyomino achievement games.  
Radcliffe--Veomett obtain a nested sequence of minimal sets in $\Z^d_{\ell_\infty}$ \cite{radcliffeveomett12}. %In [cite], the authors consider Cayley graphs on $\Z^d$ and obtain near-minimal sets for the edge isoperimetric problem.       

The
\emph{edge boundary} of a subset of a graph is defined to be the set of edges that are incident  to both a  vertex of this subset and to a vertex outside this subset. The edge isoperimetric problem has also been well-studied for the various graphs mentioned above,
 namely, by \cite{harper64, lindsey, bernstein, hart} for $Q_d$, by \cite{lindsey} for $(K_q)^d$ and \cite{bollobasleader91edge} for $(P_q)^d$ and $\Z^d_{\ell_1}$.
Recently, \cite{barbererde} studied the edge isoperimetric problem for $\Z^d_{\ell_\infty}$ and $\Z^d$ with respect to any Cayley graph.  

Many of these preceding results use a technique called \emph{compression} or \emph{normalization} that replaces a vertex set $A\subset V(X)$ with a set $c(A)\subset V(X)$ such that $\lvert A\rvert=\lvert c(A)\rvert$ and $\lvert \partial A\rvert\geq \lvert \partial (c(A))\rvert$; see \cite{harper66}. Whilst this technique is well-suited to finding a sequence of minimal sets, it does not  generally allow one to give a structural characterization of all minimal sets.

\subsection*{Outline} In Section~\ref{sec_wangwang} we review the sequence of minimal sets constructed by Wang--Wang \cite{wangwang77}. In Section~\ref{sec:saturated} we introduce the notion of a saturated set. In Sections~\ref{sec:Excess} and~\ref{sec:box_excess} we define the excess of a  set and give an explicit formula for the excess of a box (Theorem~\ref{thm:box_excess}). In Section~\ref{sec:characterization} we show in Proposition~\ref{prop:finminsat -> box} that all saturated minimal sets are boxes. Combined with our formula for the excess of box, we prove the first part of Theorem~\ref{thm:minimal set characterization intro}, thus characterizing all minimal sets in terms of their enclosing boxes. In Section~\ref{sec:connected} we show that up to congruence, there is a unique disconnected minimal set, and we prove the second part of Theorem~\ref{thm:minimal set characterization intro}. In Section~\ref{sec:graphofminsets} we study the graph $\cG$ and classify which sets  are efficient, uniquely minimal,   dead and mortal.

\subsection*{Acknowledgements} The authors are thankful for helpful discussions with Joseph Briggs, who introduced us to Harper's Theorem. We also thank Nir Lazarovich for helpful comments and suggestions.

RG was supported by Israel Science Foundation Grant 1026/15 and EPSRC grant\linebreak EP/R042187/1. IL was supported by the Israel Science Foundation and in part by a Technion fellowship. AM was supported by the Israel Science Foundation Grant No. 1562/19. ES was supported by the Azrieli Foundation, was supported in part at the Technion by a Zuckerman Fellowship, and was partially supported by NSF RTG grant $\#$1849190.

%%%%%%%%%%%%%%%%%%%%%%
%%%%%%%%%%%%%%%%%%%%%%
\section{Wang--Wang sets} \label{sec_wangwang}
%%%%%%%%%%%%%%%%%%%%%%
%%%%%%%%%%%%%%%%%%%%%%

\emph{We recall the nested sequence of minimal sets in $X$ constructed by Wang--Wang.}
\vspace{.3cm}
\begin{figure}[b]
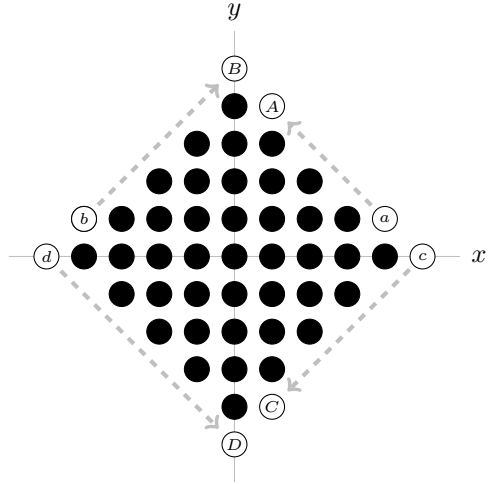
	
	\ctikzfig{images/order} 
	\caption{The black vertices denote the $\ell_1$--ball $WW_{r(n)}$ for some integer $n \ge 2$. The oriented lines show how one obtains the vertices $x_{r(n)+1}, \dots, x_{r(n+1)}$.} 
	\label{fig:order}
\end{figure}

Throughout this article, we fix the graph $X$ with vertex set $X^0=\bZ^2$, where two vertices $(x,y),(x',y')\in X^0$ are joined by an edge if and only if $\lvert x-x'\rvert+\lvert y-y'\rvert=1$.

Wang--Wang gave a nested sequence, $WW_1 \subset WW_2 \subset
\dots$, of minimal sets in $X$ such that ${|WW_n| = n}$
for all $n \ge 1$ \cite{wangwang77}.
Throughout this article, a \emph{Wang--Wang set} is a subset $A\subset X^0$ that is congruent to $WW_n$ for some $n$.
In the upcoming sections, we utilize them to prove our characterization of minimal sets in $X$.
%\begin{thm}[{\cite[\S 6]{wangwang77}}]\label{thm:wang^2}
%There is an enumeration $x_1,x_2,\dots$ of $X$ such that for every $n$ the set $WW_n\coloneqq\{x_1,\dots, x_n\}$ is minimal.
%\end{thm}
%The sets $(WW_n)$ are defined in \cite{wangwang77} in terms of coordinates on $X$. 

%We recall the definition of the sets Wang-Wang sets $(WW_n)$.
In order to define the sets $(WW_i)$, it is enough to define a sequence of vertices $(x_i)$
for $1 \le i \le n$ such that $WW_n = \{x_1, \dots, x_n\}$. 
%These vertices have the property that $\lvert x_i\rvert \leq \lvert x_j\rvert$ for $i\leq j$ (where $|.|$ is the $\ell_1$--norm). 
The first five vertices in this sequence are given by coordinates: 
\[x_1 = (0,0), ~x_2 = (1, 0), ~x_3 = (0,1), ~x_4 = (-1, 0), ~x_5 = (0, -1)\]   
Note that $\{x_1, \dots, x_5\}$ is the $\ell_1$-ball in $X$ of radius $1$ centered at $x_1$.
Let $r(n)\coloneqq 2n^2+2n+1$ denote the size of an $\ell_1$-ball in $X$ of radius $n$. 
%For all positive integers $n$, the set $WW_{r(n)}$ will be the $\ell_1$--ball
Suppose that the vertices $x_1, \dots, x_{r(n)}$ have already been defined and
that $WW_{r(n)} = \{x_1, \dots, x_{r(n)} \}$ is the $\ell_1$--ball of radius $n$ 
centered at $x_1$, i.e., $WW_{r(n)}=\{x\in X\mid \lvert x\rvert \leq n\}$ (where $|.|$ is the $\ell_1$--norm).

We use Figure~\ref{fig:order} to define the vertices $x_{r(n)+1}, \dots,
x_{r(n+1)}$. We first set $x_{r(n)+1}$ to be the specific vertex adjacent to
$WW_{r(n)}$ shown as vertex $a$ in Figure~\ref{fig:order}. The vertices
$x_{r(n)+1}, \dots, x_{r(n) + n}$ are those along the oriented line segment
$\protect\overrightarrow{aA}$ ordered by the given  orientation.
%i.e., vertices closest to $a$ are listed first. 
The next set of vertices are those along the segment $\protect\overrightarrow{bB}$, then those on $\protect\overrightarrow{cC}$, and finally those on $\protect\overrightarrow{dD}$ (where each such sequence is again ordered by the given orientation).

The following lemma follows immediately  from Wang--Wang's result:
\begin{lemma}\label{lem:bdry monotone}
Let $A,B\subset X^0$. If $A$ is minimal and $\lvert A\rvert \leq\lvert B \rvert$, then $\lvert \partial A\rvert \leq \lvert \partial B\rvert$. If in addition  $\lvert \partial A\rvert = \lvert \partial B\rvert$,  then $B$ is minimal.
\end{lemma}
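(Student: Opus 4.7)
The plan is to leverage the nested sequence $(WW_n)_{n\ge 1}$ of Wang--Wang minimal sets with $|WW_n|=n$. Setting $f(n) \coloneqq |\partial WW_n|$, the lemma reduces almost immediately to the auxiliary fact that $f$ is non-decreasing. Indeed, if $n = |A|$ and $m = |B|$ with $n \le m$, then $A$ and $WW_n$ being minimal of the same size gives $|\partial A| = f(n)$, while $WW_m$ being minimal of size $m$ gives $|\partial B| \ge f(m)$. Combining with monotonicity yields
\[
|\partial A| = f(n) \le f(m) \le |\partial B|,
\]
which is the first claim. For the second, if $|\partial A| = |\partial B|$ then the entire chain collapses to equality, forcing $|\partial B| = f(m)$; hence $B$ achieves the minimum boundary among sets of its cardinality and is therefore minimal.

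The real content is therefore monotonicity of $f$. I would prove this by induction, using the construction of the sequence directly. Writing $WW_{n+1} = WW_n \cup \{x_{n+1}\}$ and noting that $x_{n+1}$ is by construction adjacent to $WW_n$ (so $x_{n+1}\in\partial WW_n$), one checks by a direct computation that
\[
\partial WW_{n+1} = \bigl(\partial WW_n \setminus \{x_{n+1}\}\bigr) \cup \bigl(N(x_{n+1}) \setminus WW_{n+1}\bigr),
\]
where $N(x_{n+1})$ denotes the set of neighbours of $x_{n+1}$ in $X$. A short inclusion-exclusion, using that $\partial WW_n \cap N(x_{n+1}) \subseteq X^0\setminus WW_{n+1}$, simplifies this to
\[
|\partial WW_{n+1}| - |\partial WW_n| = \bigl|N(x_{n+1}) \setminus \bigl(WW_{n+1} \cup \partial WW_n\bigr)\bigr| - 1.
\]
Thus monotonicity reduces to showing that each newly added vertex $x_{n+1}$ has at least one neighbour that lies neither in $WW_{n+1}$ nor in $\partial WW_n$. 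By the construction, $x_{n+1}$ always lies on an $\ell_1$-sphere, and its neighbours lie on the two adjacent spheres; a short case analysis, distinguishing whether $x_{n+1}$ initiates a new shell (as vertex $a$ in Figure~\ref{fig:order}) or lies along one of the segments $\overrightarrow{aA}, \overrightarrow{bB}, \overrightarrow{cC}, \overrightarrow{dD}$, verifies that there is always an outward neighbour of $x_{n+1}$ not reached by any previously added $x_i$.

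The main obstacle is precisely this case analysis. The reduction to monotonicity and the deduction of both claims from it are pure bookkeeping; but no abstract argument for monotonicity of the minimum-boundary function seems to be available for general graphs, so some direct use of the Wang--Wang ordering of vertices appears unavoidable.
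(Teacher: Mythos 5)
Your proposal is correct and follows essentially the same route as the paper: both reduce the lemma to the monotonicity of $n\mapsto\lvert\partial WW_n\rvert$, deduce the two claims by comparing $A$ and $B$ to $WW_{\lvert A\rvert}$ and $WW_{\lvert B\rvert}$, and leave the monotonicity itself as a direct check on the Wang--Wang construction (the paper states the slightly sharper fact that the boundary grows by $0$ or $1$ at each step, but only monotonicity is used here). Your inclusion--exclusion identity for $\lvert\partial WW_{n+1}\rvert-\lvert\partial WW_n\rvert$ is a correct and somewhat more explicit reduction of that check, though the final case analysis is deferred in both treatments.
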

\begin{proof}
For every $m\geq 2$, it is easy to verify that either $\left| \partial WW_{m+1}\right| = \left| \partial WW_{m}\right|$ or $\left| \partial WW_{m+1}\right| = \left| \partial WW_{m}\right|+1$. It follows that $\lvert\partial  WW_n\rvert\leq \lvert\partial  WW_m\rvert$ if $n\leq m$.  As every Wang--Wang set is minimal, $\lvert\partial WW_{\lvert A\rvert}\rvert= \lvert\partial  A\rvert$ and $\lvert\partial WW_{\lvert B\rvert}\rvert\leq  \lvert\partial  B\rvert$; thus $\lvert \partial A\rvert \leq \lvert \partial B\rvert$. 
Now suppose that $\lvert \partial A\rvert = \lvert \partial B\rvert$. If for some $C\subset X^0$ we have $\lvert B\rvert =\lvert C\rvert$, then as  $\lvert A\rvert \leq\lvert C \rvert$, we have $\lvert \partial B\rvert=\lvert \partial A\rvert\leq \lvert \partial C\rvert$. Thus $B$ is minimal.
\end{proof}

%%%%%%%%%%%%%%%%%%
%%%%%%%%%%%%%%%%%%
\section{Saturated sets}\label{sec:saturated}
%%%%%%%%%%%%%%%%%%
%%%%%%%%%%%%%%%%%%

\emph{We define the notion of a  saturated set, a  subset of $X^0$ with  the property that if any additional vertex is added to this set, then its boundary must increase.}
\vspace{.3cm}

\begin{definition}
A set $A\subset X^0$ is \emph{saturated} if 
 %it follows that
  $|\partial(A\cup \{v\})|> |\partial A|$  for all $v\in X^0\setminus A$. 
\end{definition}

A \emph{configuration} is a subset $(F, N)\subset X^0\times X^0$ such that $F\cap N=\emptyset$. We say that two configurations $(F,N)$ and $(F',N')$ are \emph{congruent} if there is 
an automorphism $\phi \in \text{Aut}(X)$ 
such that $(F',N')=(\phi(F),\phi(N))$. A set $A\subset X^0$  \emph{contains the configuration} $(F, N)$ if  $F\subset A$ and $N\cap A=\emptyset$. Some configurations are shown in Figure~\ref{fig_forbidden}.

\begin{figure}[hbtp]
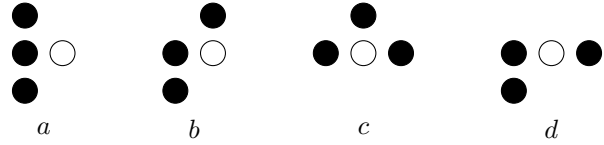
	
	\ctikzfig{images/configurations}
	\caption{Four configurations $(F,N)$ are shown, where elements of $F$ are shown as black vertices and elements of $N$ are shown as white vertices. Lemma~\ref{lemma_forbidden} ensures that a saturated set cannot contain a configuration congruent to one of the ones above.}
	\label{fig_forbidden}
\end{figure}

\begin{lemma} \label{lemma_forbidden}
	If $A\subset X^0$ is  saturated, then $A$ does not contain a configuration 
	congruent to one shown  in Figure~\ref{fig_forbidden}.
\end{lemma}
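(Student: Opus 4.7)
My plan is to derive a local boundary-change formula and then inspect each of the four pictured configurations to verify that it forces a violation of saturation. For any $A \subset X^0$ and $v \in X^0 \setminus A$, let $M_v$ denote the set of neighbors $u$ of $v$ such that $u \notin A$ and $u$ has no neighbor in $A$. Then a direct unpacking of the definition of vertex boundary gives
$$|\partial(A\cup\{v\})| - |\partial A| = |M_v| - \mathbf{1}_{v\in\partial A}.$$
Indeed, the only vertex that can leave $\partial A$ upon adding $v$ is $v$ itself (and this happens precisely when $v\in\partial A$), while the vertices newly entering $\partial A$ are exactly those neighbors of $v$ for which $v$ becomes their first neighbor in $A$. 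Consequently $A$ is saturated if and only if $|M_v|>\mathbf{1}_{v\in\partial A}$ for every $v\notin A$.

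Given this reformulation, I would proceed with a picture-by-picture check. For each configuration $(F,N)$ in Figure~\ref{fig_forbidden} I would select a canonical white vertex $v \in N$, place the configuration in standard position using a graph automorphism (this is legitimate by the congruence clause of the lemma), and directly read off $\mathbf{1}_{v \in \partial A}$ from the black vertices of $F$ adjacent to $v$, along with an upper bound on $|M_v|$. The configurations are crafted so that every neighbor of $v$ is either already in $F$ (hence in $A$ and therefore excluded from $M_v$) or is a white vertex of $N$ adjacent to some \emph{other} black vertex of $F$ (hence lies in $\partial A$ and is likewise excluded from $M_v$). In each of the four cases this forces $|M_v| \le \mathbf{1}_{v\in\partial A}$, contradicting saturation and completing the proof.

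The main obstacle is the neighbors of $v$ that are \emph{not} drawn in the configuration: such a neighbor $u$ may be in $A$ (harmless, since then $u \notin M_v$) or out of $A$ with no forced neighbor in $A$, in which case it would contribute to $M_v$. Thus the essential content of the lemma is that the vertices actually drawn in each configuration already supply enough structure---either by forcing $v$ to have many black neighbors, or by surrounding $v$ with white vertices whose own neighbors in $F$ pin them into $\partial A$---to conclude without any assumption on the undrawn neighbors. This is a finite, local verification, and I do not expect any genuine difficulty beyond carefully inspecting each of the four pictures.
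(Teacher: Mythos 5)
Your proposal is correct and follows essentially the same route as the paper: the paper's proof simply picks a white vertex $v\in N$, notes $v\notin A$, and checks directly that $\lvert\partial(A\cup\{v\})\rvert\le\lvert\partial A\rvert$, contradicting saturation. Your identity $\lvert\partial(A\cup\{v\})\rvert-\lvert\partial A\rvert=\lvert M_v\rvert-\mathbf{1}_{v\in\partial A}$ merely makes the paper's ``straightforward to check'' step explicit.
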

\begin{proof}
	Suppose $A$ contains a configuration $(F,N)$ from Figure~\ref{fig_forbidden}, and let $v \in N$. By definition $v \notin A$. Furthermore, it is straightforward to check that $\partial (A\cup \{v\})\leq \partial A$,  contradicting our assumption that  $A$ is saturated.
\end{proof}

%%%%%%%%%%%%%%%%%%%%%
%%%%%%%%%%%%%%%%%%%%%
\section{Excess}\label{sec:Excess}
%%%%%%%%%%%%%%%%%%%%%
%%%%%%%%%%%%%%%%%%%%%

\emph{We introduce excess,  a number associated to a subset $A \subset X^0$ that we will use in
later sections to characterize minimal sets and to study the structure of the
graph of minimal sets. }
\vspace{.3cm}

\begin{definition}\label{def:excess}
The \emph{excess} of $A\subset X^0$ is defined to be 
\[\Exc(A)\coloneqq \max \bigl\{ \, |A| - |B|  \bigm|  B \subset X^0 \text{ is
minimal and } \lvert\partial A\rvert=\lvert\partial B\rvert  \, \bigr\}.\]
\end{definition}

The following lemma shows that the excess of $A$ is  well-defined.

\begin{lemma} \label{lem:excess_well_defined}
For any finite $A\subset X^0$, there exists a minimal set $B\subset X^0$ such that $\lvert \partial A\rvert=\lvert \partial B\rvert$. 
\end{lemma}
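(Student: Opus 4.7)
The plan is to produce $B$ directly from the Wang--Wang sequence $(WW_n)_{n\geq 1}$. Since every Wang--Wang set is minimal, it suffices to find $n$ with $\lvert\partial WW_n\rvert = \lvert\partial A\rvert$ and set $B \coloneqq WW_n$.

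First I would establish that the integer sequence $\bigl(\lvert\partial WW_n\rvert\bigr)_{n\geq 1}$ realizes every integer from $4$ onwards. Two facts are needed. (i) The sequence is non-decreasing and its consecutive differences lie in $\{0,1\}$: this is precisely the observation already used in the proof of Lemma~\ref{lem:bdry monotone}, and is verified by inspecting which vertex is added at each step in the Wang--Wang construction along the oriented segments $\overrightarrow{aA},\overrightarrow{bB},\overrightarrow{cC},\overrightarrow{dD}$ in Figure~\ref{fig:order}. (ii) The sequence is unbounded: since $WW_{r(n)}$ is the $\ell_1$-ball of radius $n$ centered at the origin, its vertex boundary is exactly the $\ell_1$-sphere of radius $n+1$, which has $4(n+1)$ elements. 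Combined with $\lvert\partial WW_1\rvert=4$, these two facts yield the claim.

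Next I would show that $\lvert\partial A\rvert$ lies in $\{0\}\cup\{4,5,6,\dots\}$. The case $A=\emptyset$ is handled by setting $B\coloneqq\emptyset$, which is trivially minimal since it is the unique subset of $X^0$ of its size. When $A$ is finite and nonempty, choose vertices of $A$ realizing the maximal $x$-coordinate, minimal $x$-coordinate, maximal $y$-coordinate, and minimal $y$-coordinate; each of these extremal vertices contributes a distinct neighbor to $\partial A$ (respectively to its right, left, above, and below), forcing $\lvert\partial A\rvert\geq 4$.

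Combining the two steps, there is some $n\geq 1$ with $\lvert\partial WW_n\rvert=\lvert\partial A\rvert$, and $B\coloneqq WW_n$ works. I expect the only real obstacle to be the bookkeeping in step (i), which is a routine case analysis on the position where a vertex is added along the four oriented segments in Figure~\ref{fig:order} (each new vertex either adds one new boundary neighbor while removing itself from the boundary, or replaces itself with a single new neighbor); once this is granted, the rest of the argument is immediate.
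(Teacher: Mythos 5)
There is a genuine gap in step (i). The consecutive differences of $\bigl(\lvert\partial WW_n\rvert\bigr)$ do \emph{not} all lie in $\{0,1\}$: one has $\lvert\partial WW_1\rvert=4$ but $\lvert\partial WW_2\rvert=6$, so the very first increment is $2$ and the sequence skips the value $5$ (the realized values are $4,6,7,8,\dots$). This is why the paper's proof of Lemma~\ref{lem:bdry monotone} is careful to assert the $\{0,1\}$-increment property only for $m\ge 2$. Your claim that the sequence realizes every integer from $4$ onwards is therefore false, and your lower bound $\lvert\partial A\rvert\ge 4$ is not strong enough to close the argument: you have not excluded the possibility $\lvert\partial A\rvert=5$, and if some finite set had boundary of size $5$ your construction would produce no matching Wang--Wang set.

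The fix is exactly what the paper does: treat $\lvert A\rvert=1$ separately (a singleton is itself minimal, so take $B=A$), and for $\lvert A\rvert\ge 2$ invoke Lemma~\ref{lem:bdry monotone} with the minimal set $WW_2$ to get $\lvert\partial A\rvert\ge\lvert\partial WW_2\rvert=6$; then the $\{0,1\}$-increment property for $m\ge 2$ together with unboundedness shows every integer $n\ge 6$ is realized as $\lvert\partial WW_m\rvert$ for some $m\ge 2$. Your extremal-vertex argument only yields the bound $4$, which is insufficient; either upgrade it to a direct proof that $\lvert\partial A\rvert\ge 6$ whenever $\lvert A\rvert\ge 2$, or use the monotonicity lemma as the paper does.
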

\begin{proof}
 We first claim that if $n\geq 6$, then
 there exists a minimal set $B$ with $\left| \partial B\right|  = n$.
 Indeed, as noted in the proof of Lemma~\ref{lem:bdry monotone},  for every $m \ge 2$ either $|\partial WW_{m+1}|
 = |\partial WW_m|$ or $|\partial WW_{m+1}|
 = |\partial WW_m| + 1$. Moreover,  $\{ \left| \partial WW_{m}\right|\mid m\in \bN\} $ is unbounded.   Since $\left|\partial WW_2\right| =6$, the claim follows.
 
 If $\left| A\right| =1$, then $A$ is minimal. Otherwise,  $\lvert A\rvert \geq 2=\lvert  WW_2\rvert$, so Lemma~\ref{lem:bdry monotone} ensures that $| \partial A | \ge \left| WW_2\right|= 6$. 
By the preceding claim, there exists a minimal set $B$ with $|\partial A| = |\partial B|$.
\end{proof}

  The next two lemmas follow almost immediately from the definition of excess.

\begin{lemma}\label{lem:+ve exc minimal}
 A finite set $A \subset X^0$ is minimal if and only if $\Exc(A) \geq 0$. 
\end{lemma}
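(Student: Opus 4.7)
The plan is to unpack the definition of excess in both directions, leveraging Lemma~\ref{lem:bdry monotone} for the substantive direction and the trivial choice $B = A$ for the other.

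For the ``only if'' direction, I would assume $A$ is minimal. Then $A$ itself is a candidate for $B$ in the definition of $\Exc(A)$, since $A$ is minimal and trivially $|\partial A| = |\partial A|$. This gives $\Exc(A) \geq |A| - |A| = 0$. This is essentially a one-line observation.

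For the ``if'' direction, suppose $\Exc(A) \geq 0$. By Lemma~\ref{lem:excess_well_defined} the maximum in the definition is attained, so there exists a minimal set $B \subset X^0$ with $|\partial B| = |\partial A|$ and $|A| - |B| = \Exc(A) \geq 0$, i.e., $|B| \leq |A|$. Now I would apply Lemma~\ref{lem:bdry monotone} to the pair $(B, A)$: since $B$ is minimal and $|B| \leq |A|$, the second conclusion of that lemma, combined with the equality $|\partial B| = |\partial A|$, immediately yields that $A$ is minimal.

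The main (and really the only) obstacle is making sure the minimal set $B$ achieving the excess exists at all, but this is exactly what Lemma~\ref{lem:excess_well_defined} guarantees, so there is nothing further to verify. The whole argument should take just a few lines.
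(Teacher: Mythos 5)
Your proof is correct and follows essentially the same route as the paper: the forward direction takes $B=A$ in the definition of excess, and the converse extracts a minimal $B$ with $\lvert\partial B\rvert=\lvert\partial A\rvert$ and $\lvert B\rvert\leq\lvert A\rvert$ (guaranteed by Lemma~\ref{lem:excess_well_defined}) and applies Lemma~\ref{lem:bdry monotone}. No gaps.
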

\begin{proof}
 If $A$ is minimal, then by taking $B = A$ in the definition of excess, we
 get that~$\Exc (A) \ge 0$. 
 For the converse, if $\Exc (A) \ge 0$, then there
 exists a minimal set $B$ such that $|\partial A| = |\partial B|$ and $|A|
 - |B| \ge 0$. Thus, $A$ is minimal by Lemma~\ref{lem:bdry monotone}.
\end{proof}

\begin{lemma} \label{lemma_def_same_boundary}
 If $A,A' \subset X^0$ are finite and  $|\partial A| = |\partial A'|$, then  $\Exc(A) - \Exc(A') = |A| - |A'|$.
\end{lemma}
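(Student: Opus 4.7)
The plan is to observe that the maximum in the definition of $\Exc$ can be rewritten, after a small manipulation, so that it only depends on $A$ through $|\partial A|$ and $|A|$ in a separated way. Concretely, I would define
\[ m(A) := \min\bigl\{\, |B| \bigm| B\subset X^0 \text{ is minimal and } |\partial B|=|\partial A|\,\bigr\}, \]
and argue that $\Exc(A)=|A|-m(A)$. Existence of this minimum needs a one-line justification: by Lemma~\ref{lem:excess_well_defined} the set on the right is non-empty, and since cardinalities are non-negative integers the minimum is attained.

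With this rewriting in hand, the hypothesis $|\partial A|=|\partial A'|$ immediately gives $m(A)=m(A')$, because the defining constraint $|\partial B|=|\partial A|=|\partial A'|$ picks out exactly the same collection of minimal sets $B$ in both cases. Therefore
\[ \Exc(A)-\Exc(A')=\bigl(|A|-m(A)\bigr)-\bigl(|A'|-m(A')\bigr)=|A|-|A'|, \]
which is the desired equality.

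There is essentially no obstacle here: the content of the lemma is just that the $B$ appearing in the definition of $\Exc$ is constrained only by the boundary size, so replacing $A$ by any $A'$ with the same boundary size shifts the excess by exactly $|A|-|A'|$. The only thing to be slightly careful about is writing the ``$\max$'' as $|A|$ minus a ``$\min$'' and noting that this minimum really is attained, which is where Lemma~\ref{lem:excess_well_defined} enters.
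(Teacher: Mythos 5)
Your proof is correct and is essentially the paper's argument: the paper likewise picks a single minimal set $B$ of smallest size with $|\partial B|=|\partial A|=|\partial A'|$ and writes both excesses as $|A|-|B|$ and $|A'|-|B|$, which is exactly your $\Exc(A)=|A|-m(A)$ rewriting made explicit. Your extra remark that the minimum is attained (via Lemma~\ref{lem:excess_well_defined} and integrality) is a fine, if implicit in the paper, justification.
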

\begin{proof}
Since $|\partial A| = |\partial A'|$, there exists a minimal set $B\subset X^0$ such that $\Exc(A)=\left| A\right| -\left| B\right| $ and $\Exc(A')=\left| A'\right| -\left| B\right| $.  Thus, 
 \[\Exc(A) - \Exc(A') = \bigl(|A| - |B|  \bigr) - \bigl(|A'| - |B|  \bigr) = |A|- |A'|. \qedhere\]
\end{proof}

%%%%%%%%%%%%%%%%%%%%%
%%%%%%%%%%%%%%%%%%%%%
\section{Boxes} \label{sec:box_excess}
%%%%%%%%%%%%%%%%%%%%%
%%%%%%%%%%%%%%%%%%%%%
\emph{
In this section, we define boxes. These are sets that are bounded by lines of slope $1$ and $-1$. We prove some key facts regarding these sets and give an explicit formula for their excess. 
As a consequence, we determine which boxes are minimal sets. 
}
\vspace{.3cm}

Proposition~\ref{prop:charbox} demonstrates that the following definition of a box is  equivalent to the one given in the introduction.
\begin{definition} \label{def:box}
 A \emph{box} in $X$ is a nonempty
  subset of $X^0$ of the form \[B(a,b,c,d)\coloneqq \{(x,y)\in X^0\mid a\leqslant y-x\leqslant b \text{ and } c\leqslant y + x \leqslant d\}\]  
 for some $a, b, c, d\in \mathbb{Z}$.
\end{definition}
 %Example of empty box: $\emptyset=B(0,0,-1,-1)=B(1,0,1,0)$.
 
\begin{conv}\label{conv:box}
 There is an ambiguity when giving coordinates for boxes, which only arises for degenerate boxes of the form $B(a,b,c,d)$ where either $a=b$ or $c=d$. For example, $B(0,0,0,3)$ and $B(0,0,0,2)$ are the same box. To remedy this issue and ensure boxes can be uniquely parametrized, given a box $B(a, b, c, d)$ we implicitly assume that $a$ and $c$ are maximal and that $b$ and $d$ are minimal out of all possible choices.
\end{conv}
Given a subset $A\subset X^0$, we define $N(A)\coloneqq A\sqcup \partial A$.
 We next show that boxes are saturated. In the next section, we prove a converse 
 to this statement for minimal sets
  (see Proposition~\ref{prop:finminsat -> box}). 

\begin{lemma}\label{lem:box_is_sat}
Every box is saturated.
\end{lemma}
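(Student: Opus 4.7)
The plan is to work in rotated coordinates $(p, q) = (y - x, y + x)$ on $X^0$, in which the box $B(a, b, c, d)$ becomes the axis-aligned rectangle $\{(p, q) : a \le p \le b, \, c \le q \le d\}$ intersected with the parity sublattice $p + q \equiv 0 \pmod{2}$, and each edge of $X$ corresponds to a diagonal move $(p, q) \mapsto (p \pm 1, q \pm 1)$. Two key facts fall out immediately. First, $B \cup \partial B$ coincides with the inflated rectangle $\{(p, q) : a - 1 \le p \le b + 1, \, c - 1 \le q \le d + 1\}$ on the parity sublattice, since any lattice vertex outside the inflated rectangle has every diagonal neighbor with some coordinate outside $[a, b] \times [c, d]$. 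Second, unpacking the definition of $\partial$ gives, for any finite $A \subset X^0$ and any $v \in X^0 \setminus A$,
\[
|\partial(A \cup \{v\})| - |\partial A| = |N(v) \setminus (A \cup \partial A)| - \epsilon_v,
\]
where $N(v)$ is the neighbor set of $v$ in $X$ and $\epsilon_v = 1$ if $v \in \partial A$ and $\epsilon_v = 0$ otherwise. Saturation of $B$ is therefore equivalent to the right-hand side being strictly positive for every $v \in X^0 \setminus B$.

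I would split the verification into two cases. If $v \in \partial B$, then $v$ lies on one of the four sides of the inflated rectangle, and after applying an automorphism of $X$ I may assume the $p$-coordinate of $v$ equals $a - 1$. The four neighbors of $v$ then have $p$-coordinates $a - 2$ (two of them) and $a$ (two of them); the two with $p$-coordinate $a - 2$ lie outside the inflated rectangle and hence outside $B \cup \partial B$. Thus $|N(v) \setminus (B \cup \partial B)| \ge 2$, giving a change of at least $2 - 1 = 1 > 0$. If $v \notin B \cup \partial B$, then at least one coordinate of $v$ lies strictly outside the inflated range; by symmetry I may assume the $p$-coordinate of $v$ is at most $a - 2$ or at least $b + 2$. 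The two neighbors of $v$ obtained by shifting that coordinate further outward then lie outside $B \cup \partial B$, so $|N(v) \setminus (B \cup \partial B)| \ge 2$, and since $\epsilon_v = 0$, the change is at least $2$.

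I do not expect any serious obstacle: the rectangular shape of $B$ in the rotated coordinates, together with edges of $X$ being diagonal moves, makes the neighbor counting transparent in every sub-case. Corner positions of $v$ (where $v$ sits diagonally outside two sides of $B$) and positions of $v$ far from $B$ yield strictly larger increments than the bounds above, and degenerate boxes with $a = b$ or $c = d$ need no separate treatment, since the argument depends only on the position of $v$ relative to the inflated rectangle and not on the precise dimensions of $B$.
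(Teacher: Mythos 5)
Your argument is, at its core, the same as the paper's: for $v \notin B$ you locate a violated defining inequality of the box and push one step further past it to produce two neighbors of $v$ lying outside $B \cup \partial B$, which forces the boundary to grow; the rotated coordinates and the identity $|\partial(A\cup\{v\})| - |\partial A| = |N(v)\setminus(A\cup\partial A)| - \epsilon_v$ are a repackaging of the paper's computation via $|\partial A| = |N(A)| - |A|$. There is, however, one step you lean on without justifying: you assert that $B\cup\partial B$ \emph{equals} the inflated rectangle, but your one-line reason only gives the inclusion of $B\cup\partial B$ \emph{into} the inflated rectangle. Your Case 2 uses the reverse inclusion (``$v\notin B\cup\partial B$ implies some coordinate of $v$ is strictly outside the inflated range''), and that reverse inclusion is exactly the direction where the parity sublattice and the degenerate boxes with $a=b$ or $c=d$ require care --- it is true, but only because Convention~\ref{conv:box} guarantees each extremal line actually meets the box. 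Fortunately the gap is harmless, because the case split is unnecessary: for \emph{any} $v\notin B$ some coordinate violates the defining inequalities, say $p\le a-1$, and then the two neighbors of $v$ with $p$-coordinate $p-1\le a-2$ lie outside the inflated rectangle and hence outside $B\cup\partial B$; since $2>\epsilon_v$ always, saturation follows with no appeal to the reverse inclusion. This unified version is precisely the paper's proof.
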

\begin{proof}
Let $B=B(a,b,c,d)$ be a box. 
Suppose  $v=(x, y)\in X^0\setminus B$.
As $v \notin B$,
$x$ and $y$ do not
satisfy one of the four defining equations of $B$. 
Without loss of generality, we  assume $x+y>d$. 
It follows that $(x+1,y)$ and $(x,y+1)$ are adjacent to $v$ and not contained in 
$N(B)$. 
As $\lvert N (B\cup\{v\})\rvert \geq \lvert N( B)\rvert+2$, we have  
\[\lvert \partial (B\cup\{v\})\rvert=\lvert N(B\cup\{v\})\rvert - \lvert B\cup\{v\}\rvert  \geq( \lvert N(B)\rvert+2) - (\lvert B\rvert+1)= \lvert\partial B\rvert +1>\lvert\partial B\rvert,\]
demonstrating that $B$ is saturated.
\end{proof}

An \emph{extremal line} of a box $B(a,b,c,d)$  is the set of solutions in $\Z ^2$  to  one of the four equations $y-x=a$, $y-x=b$, $y+x=c$ or $y+x=d$. By the above convention, a box intersects each of its extremal lines in at least one point. A \emph{corner} of the box $B$ is an element of $B$ that lies on the intersection of two distinct extremal lines. A box has either zero, two, or four corners. An example of a box with two corners is shown in the left of Figure~\ref{fig:boxes} and one with no corners is shown on the right of that figure.

\begin{figure}[htp]
	\begin{subfigure}[B]{0.4\textwidth}
		\scalebox{1}{\tikzfig{images/box} }
		\caption{The box $B(0,4,0,9)=B(4,9)$.}
		\label{fig:box}
	\end{subfigure}
	\hfill
	\begin{subfigure}[B]{0.4\textwidth}
		\vspace{.7cm}
		\scalebox{1}{\tikzfig{images/box2} }
		\caption[(b)]{The box $B(0,4,-1,5)=\hat B(4,6)$.}
		\label{fig:box2}
	\end{subfigure}
\caption{}
	\label{fig:boxes}
\end{figure}

%\begin{figure}[htp]
%	\centering
%	\scalebox{1}{\tikzfig{images/box}}
	
%	\caption{The box $B(0,4,0,9)=B(4,9)$}\label{fig:box}
%\end{figure}

%\begin{figure}[htp]
%	\centering
%	\scalebox{1}{\tikzfig{images/box2}}
	
%	\caption{The box $B(0,4,-1,5)=\hat B(4,6)$}\label{fig:box2}
%\end{figure}
\begin{definition}
	The {\it modulus} of a box $B(a,b,c,d)$ is the unordered pair $\{b-a,d-c\}$. 
\end{definition}

When Convention~\ref{conv:box} is followed, it is evident that the modulus of a box is well-defined and is invariant under congruence. 
We intuitively expect that a box of modulus $\{\alpha,\beta\}$ is minimal when $|\alpha - \beta|$ is small. We precisely quantify how small $|\alpha - \beta|$ must be in  Theorem~\ref{thm:box_excess} and Remark~\ref{rem:min boxes}.

We now show that a box of modulus $\{\alpha, \beta\}$ is congruent to a ``standard box'' $B(\alpha, \beta)$ or $\hat{B}(\alpha, \beta)$ as defined below.

\begin{notation} \label{notation:boxes}
 Define $B(\alpha,\beta)\coloneqq B(0,\alpha,0,\beta)$ for any $\alpha,\beta \in \bN$,  and $\hat B(\alpha,\beta)\coloneqq B(0,\alpha,-1,\beta-1)$ for any even $\alpha,\beta \in \bN$ (see Figure~\ref{fig:boxes} for examples).
\end{notation}

%To compute the excess of a finite box, we first show that a finite box is congruent to $B(\alpha,\beta)$ or $\hat B(\alpha,\beta)$ for some $\alpha,\beta \in \bN$.

\begin{prop}\label{prop:charbox}
	Let $B = B(a,b,c,d)$ be a box. If $B$ has no corners, then $b-a$ and $d-c$ are both even and $B$ is congruent to $\hat B(b-a,d-c)$. Otherwise, $B$ is congruent  to $B(b-a,d-c)$.
\end{prop}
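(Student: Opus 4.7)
The plan is a direct parity analysis on the parameters $(a,b,c,d)$. The starting point is that every lattice point on the extremal line $\{y-x=\lambda\}$ satisfies $y+x = \lambda + 2x \equiv \lambda \pmod 2$, so the extremal lines $\{y-x=\lambda_1\}$ and $\{y+x=\lambda_2\}$ meet in $\Z^2$ if and only if $\lambda_1 \equiv \lambda_2 \pmod 2$. Hence the corners of $B$ are in bijection with the parity-matching pairs in $\{a,b\}\times\{c,d\}$, and $B$ has no corner if and only if $a \equiv b \not\equiv c \equiv d \pmod 2$. In particular, the no-corner case automatically yields that $b-a$ and $d-c$ are both even.

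Next I would record how $\Aut(X) = \Z^2 \rtimes D_4$ acts on the parameters. An integer translation $(x,y)\mapsto(x+u,y+v)$ sends $B(a,b,c,d)$ to $B(a+(v-u),\,b+(v-u),\,c+(v+u),\,d+(v+u))$, so the pair $(a,b)$ can be shifted by any integer $s$ and $(c,d)$ by any integer $t$ provided $s \equiv t \pmod 2$. The dihedral group $D_4$ acts on parameters by eight symmetries that negate entries and/or swap the pairs $(a,b)$ and $(c,d)$; for instance $(x,y)\mapsto(-x,y)$ induces $(a,b,c,d) \mapsto (c,d,a,b)$ and $(x,y)\mapsto(-y,-x)$ induces $(a,b,c,d) \mapsto (a,b,-d,-c)$. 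Each $D_4$ action evidently preserves the modulus $\{b-a,\,d-c\}$ as a multiset.

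In the no-corner case, $a \not\equiv c \pmod 2$, so the translation $(u,v) = \bigl(\tfrac{a-c-1}{2},\,-\tfrac{a+c+1}{2}\bigr)$ is genuinely integer and sends $B$ to $B(0,\,b-a,\,-1,\,d-c-1) = \hat B(b-a,d-c)$. In the corner case, I first apply an appropriate element of $D_4$ to move some parity-matching corner into the position $(a,c)$; a short enumeration shows that one of the eight dihedral symmetries accomplishes this for each of the four possible corners, while keeping the modulus multiset unchanged. After this reduction $a \equiv c \pmod 2$, and the translation $(u,v) = \bigl(\tfrac{a-c}{2},\,-\tfrac{a+c}{2}\bigr)$ is integer and carries $B$ to $B(0,\,b-a,\,0,\,d-c) = B(b-a,d-c)$.

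The main (and rather minor) obstacle is the reduction to a fixed corner in the second case: one must verify that some element of $D_4$ maps each of the four candidate corner positions to the $(a,c)$-corner while preserving the modulus multiset. This is a routine check from the explicit formulas for the $D_4$-action recorded above, and once it is in hand the rest of the argument is purely computational.
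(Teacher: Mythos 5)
Your proof is correct and follows essentially the same route as the paper's: the parity of the parameters governs whether the extremal lines meet in a lattice point (hence whether corners exist), and an explicit automorphism --- a translation, preceded in the corner case by a point symmetry --- carries the box to the standard form $B(b-a,d-c)$ or $\hat B(b-a,d-c)$. Your version merely makes the $\Z^2\rtimes D_4$ bookkeeping more explicit than the paper does.
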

\begin{proof}
	Suppose first that $B$ contains no corners. Then the  intersection of the line $y - x = a$ with the line $y+x =  c$ does not have integer coordinates, so  $a$ and $c$ have 
	opposite parity. Similarly, we deduce that  $a$ and $d$ have opposite parity and that $b$ and $c$ have opposite parity. Thus $b-a$ and  $d-c$ must both be  even. 
	Let $u$ be the vertex of $B$ which lies on the the line $y = x + a$ and has minimal $y$-value of all such possible choices. We can apply a translation which sends $u$ to the origin $(0,0)$. The resulting box is then $B(0,b-a,-1,d-c-1) = \hat B (b-a, d-c)$ as desired.
	
	On the other hand, suppose that $B$ contains a corner $v$. Then there exists an automorphism of $X$ sending $v$ to the origin that maps $B$ to the box   $B(b-a,d-c)$.
\end{proof}

The next two lemmas calculate the size of a box and its boundary. 
\begin{lemma}\label{lem:bdry}
Let $B$ be a box with modulus $\{\alpha,\beta\}$. Then $\lvert \partial B\rvert =\alpha+\beta+4$.
\end{lemma}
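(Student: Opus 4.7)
The plan is to pass to rotated coordinates $u = y - x$ and $v = y + x$, in which boxes become axis-aligned rectangles and adjacency becomes ``diagonal.''  Under this bijection, lattice points $\bZ^2$ correspond to pairs $(u,v) \in \bZ^2$ with $u + v$ even, and two vertices $(u_1, v_1)$, $(u_2, v_2)$ are adjacent in $X$ if and only if $|u_1 - u_2| = |v_1 - v_2| = 1$.  In these coordinates, the box $B = B(a, b, c, d)$ is precisely the set of even-sum lattice points in the rectangle $R := [a, b] \times [c, d]$.

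The key step is to describe $N(B) = B \cup \partial B$ in these coordinates as the set of even-sum lattice points in the enlarged rectangle $R' := [a-1, b+1] \times [c-1, d+1]$.  Indeed, a lattice point $(u,v)$ has a diagonal neighbor in $B$ if and only if one can choose signs $\epsilon_1, \epsilon_2 \in \{\pm 1\}$ with $u + \epsilon_1 \in [a, b]$ and $v + \epsilon_2 \in [c, d]$, equivalently $u \in [a - 1, b + 1]$ and $v \in [c - 1, d + 1]$; the even-sum condition is automatic since changing each coordinate by $\pm 1$ alters $u + v$ by an even number.

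It then remains to compute $|\partial B| = |N(B)| - |B|$ by counting even-sum lattice points in each rectangle.  For an $m \times n$ rectangular grid of lattice points, this count equals $mn/2$ when $mn$ is even and $(mn \pm 1)/2$ when $mn$ is odd, the sign depending on the parity of the corner's coordinate sum.  Since the bottom-left corner of $R'$ differs from that of $R$ by $(-1, -1)$, the two corner sums differ by an even number and hence have the same parity, so any $\pm 1/2$ correction is identical for $|B|$ and $|N(B)|$ and cancels upon subtraction, leaving
\[
|\partial B| \;=\; \frac{(b - a + 3)(d - c + 3) - (b - a + 1)(d - c + 1)}{2} \;=\; (b - a) + (d - c) + 4 \;=\; \alpha + \beta + 4.
\]
The only subtle point is the parity bookkeeping in this final step, which is a routine case analysis once one observes the corner-parity alignment between $R$ and $R'$; everything else is a direct translation into the diagonal-lattice picture.
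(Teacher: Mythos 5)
Your proof is correct, but it takes a genuinely different route from the paper's. The paper argues by induction on the modulus: it peels off the intersection of the box with an extremal line, checks that this removes exactly one more vertex from $N(B)$ than from $B$, and reduces to the base cases $B(0,\beta)$ and $B(1,1)$. You instead pass to the rotated coordinates $(u,v)=(y-x,\,y+x)$, identify $N(B)$ with the even-sum lattice points of the enlarged rectangle $[a-1,b+1]\times[c-1,d+1]$, and compute $\lvert N(B)\rvert-\lvert B\rvert$ by checkerboard counting; the parity corrections cancel because the corners of $R$ and $R'$ have coordinate sums of equal parity and $m'n'\equiv mn \pmod 2$. This is a clean closed-form computation that avoids induction entirely, at the cost of the parity bookkeeping you flag. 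One small point deserves explicit care: for degenerate boxes with $b=a$ (or $d=c$), the asserted equivalence ``signs exist iff $u\in[a-1,b+1]$ and $v\in[c-1,d+1]$'' is literally false for integer points with $u=a$ (e.g.\ the single-point box, whose unique element has no diagonal neighbor in $B$), since $[a-1,b-1]\cup[a+1,b+1]$ omits $a$ when $b=a$. The identity $N(B)=R'\cap\{u+v \text{ even}\}$ nevertheless survives, because Convention~\ref{conv:box} forces $a+c$ and $a+d$ to be even in that case, so the would-be exceptional points $(a,c-1)$ and $(a,d+1)$ have odd coordinate sum and are not points of the diagonal lattice, while the points with $u=a$ and $v\in[c,d]$ lie in $B$ itself. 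With that one-line check added, the argument is complete and yields $\lvert\partial B\rvert=\alpha+\beta+4$ in all cases.
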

\begin{proof} 
Let  $B = B(a, b, c, d)$ be a box of modulus $\{\alpha = b-a, \beta = d-c \}$. We can assume without loss of generality that $\alpha\leq \beta$. First suppose that $\alpha=0$, in which case $B$ is congruent to $B(0,\beta)$. Note that Convention~\ref{conv:box} implies $\beta$ must be even. Since $\lvert \partial B(0,0)\rvert=4$ and $\lvert \partial B(0,2n+2)\rvert =\lvert \partial B(0,2n)\rvert +2$ for every $n\in \bN$, it follows by induction that $\lvert \partial B(0,\beta)\rvert =\beta+4$ for all even $\beta$.

We thus assume that $\alpha\geq 1$. If $\alpha=\beta=1$, then $B$ is congruent to $B(1,1)$ and the formula $\lvert \partial B\rvert =\alpha+\beta+4=6$ clearly holds. We thus also assume that $\beta\geq 2$ and proceed by induction on $\beta$. We assume that the lemma is true for all boxes of modulus $\{\alpha',\beta'\}$, where $\max(\alpha',\beta')<\beta$.
Let $L$ and $L^+$ be the lines with equations $y = -x + d$ and $y = -x + d + 1$ respectively.  Let $V := \partial (L \cap B) \cap L^+$ and observe that  $|V| =  |L \cap B| + 1$.
Let $B'= B(a, b, c, d-1)$. Since  $d-c \ge 2$ and $b-a \ge 1$, the preceding parametrization of $B'$ is consistent with Convention~\ref{conv:box}. Thus $B'$ has modulus $\{\alpha,\beta-1\}$. We observe that $N(B)=N(B')\sqcup V$ and $B=B'\sqcup (B\cap L)$. The claim now follows from the equation below, where the last equality uses our induction hypothesis.
\[\left| \partial B\right| =\left| N(B)\right| -\left| B\right| =(\left| N(B')\right|+\left| V\right|)  -(\left| B'\right|+\left| L\cap B\right| )=\left| \partial B'\right| +1=\alpha+\beta+4. \qedhere\]
\end{proof}

\begin{lemma}\label{lem:area}
Let $\alpha,\beta\in \bN$, we have that 	\[\lvert B(\alpha,\beta)\rvert =\left\lfloor\frac{\alpha\beta+\alpha+\beta+2}{2}\right\rfloor.\]
Moreover, if $\alpha$ and $\beta$ are both even, then 
	\[\lvert \hat B(\alpha,\beta)\rvert =\frac{\alpha\beta+\alpha+\beta}{2}.\]
\end{lemma}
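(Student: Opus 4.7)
My plan is to apply the linear change of coordinates $u=y-x$, $v=y+x$. Its inverse sends $(u,v)\mapsto((v-u)/2,(u+v)/2)$, so it gives a bijection between $\Z^2$ and the set of pairs $(u,v)\in\Z^2$ with $u\equiv v\pmod 2$. Under this bijection, $B(\alpha,\beta)$ corresponds to the same-parity integer pairs in $[0,\alpha]\times[0,\beta]$, while $\hat B(\alpha,\beta)$ corresponds to the same-parity pairs in $[0,\alpha]\times[-1,\beta-1]$. Both cardinalities thus reduce to counting same-parity lattice points in a rectangle, which I would compute as (number of even $u$'s)$\cdot$(number of even $v$'s) plus (number of odd $u$'s)$\cdot$(number of odd $v$'s).

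For $\hat B(\alpha,\beta)$ with $\alpha,\beta$ both even, the interval $[0,\alpha]$ contains $\alpha/2+1$ even and $\alpha/2$ odd integers, while $[-1,\beta-1]$ contains $\beta/2$ even integers ($0,2,\dots,\beta-2$) and $\beta/2+1$ odd integers ($-1,1,\dots,\beta-1$). Summing the two products gives $(\alpha/2+1)(\beta/2)+(\alpha/2)(\beta/2+1)=(\alpha\beta+\alpha+\beta)/2$, as claimed.

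For $B(\alpha,\beta)$ I would split into three subcases by parity. When $\alpha,\beta$ are both even, the product sum gives $(\alpha/2+1)(\beta/2+1)+(\alpha/2)(\beta/2)=(\alpha\beta+\alpha+\beta+2)/2$, an integer that matches the formula without any floor. When $\alpha,\beta$ are both odd, each of $[0,\alpha]$ and $[0,\beta]$ contains equally many even and odd integers, and the sum simplifies to $(\alpha+1)(\beta+1)/2$; since $\alpha\beta+\alpha+\beta+2=(\alpha+1)(\beta+1)+1$ is odd in this case, the floor equals $(\alpha+1)(\beta+1)/2$ as required. The mixed-parity case is analogous and yields the same expression $(\alpha+1)(\beta+1)/2$, again matching the floor.

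No step is difficult; the only mild obstacle is tracking the floor across the parity cases for $B(\alpha,\beta)$, which is routine once the subcases are separated. An alternative would be to induct on $\beta$ as in the proof of Lemma~\ref{lem:bdry}, slicing off the diagonal line $y+x=\beta$, but the change of coordinates gives the cleanest uniform derivation and also explains why the parity of the modulus intervenes.
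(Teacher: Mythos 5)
Your proposal is correct and is essentially the paper's own argument in slightly different packaging: the paper projects onto $y-x$ and sums the fiber sizes (which depend on the parity of $y-x$), which amounts to exactly your count of same-parity lattice points in the rectangle $[0,\alpha]\times[0,\beta]$ (resp.\ $[0,\alpha]\times[-1,\beta-1]$) in the coordinates $u=y-x$, $v=y+x$. The parity case analysis and the resulting arithmetic match the paper's proof.
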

\begin{proof}
Let $p:X^0\rightarrow \Z$ be the projection map given by $(x,y)\mapsto y-x$.
Let $B$ be a box and let $I$ be the interval $p(B)$. Thus $\lvert B\rvert =\sum_{i\in I}\lvert p^{-1}(i)\cap B\rvert$. 
%We also have that $\lvert I\rvert=a+1$. 
%The value of $\lvert p^{-1}(x)\cap B\rvert$ depends on the type of box of $B$ and on the parity of $b$ and $x$. 
We break the proof into cases depending on the type of box $B$ and the parity of $\alpha$ and $\beta$.

We first analyze the case where $B = B(\alpha, \beta)$. Suppose $\beta$ is even. It follows that $|p^{-1}(i)| = \frac{\beta}{2}+1$ for even $i \in I$ and $|p^{-1}(i)| = \frac{\beta}{2}$
 for odd $i \in I$. Thus, if $\alpha$ is even, then 
\[\lvert B \rvert = \left( \frac{\alpha}{2} + 1 \right) \left( \frac{\beta}{2}+1 \right) + \left(\frac{\alpha}{2}  \frac{\beta}{2} \right)= \frac{\alpha\beta+\alpha+\beta+2}{2} = \left\lfloor\frac{\alpha\beta+\alpha+\beta+2}{2}\right\rfloor. \]
If $\alpha$ is odd, we have:
\[\lvert B \rvert = \left(\frac{\alpha + 1}{2} \right) \left(\frac{\beta}{2}+1 \right) + \left( \frac{\alpha + 1}{2} \right) \left( \frac{\beta}{2} \right) = \frac{\alpha\beta+ \alpha + \beta+1}{2} = \left\lfloor\frac{\alpha\beta+\alpha+\beta+2}{2}\right\rfloor.  \]
The last equality follows since $\frac{\alpha\beta+ \alpha + \beta+1}{2}$ is equal to $|B|$ and hence it is an integer.

On the other hand, suppose that $\beta$ is odd. In this case, it follows that $|p^{-1}(i)| = \frac{\beta + 1}{2}$ for all $i \in I$. Thus, 
\[\lvert B \rvert = \sum_{j = 0}^{\alpha} \frac{\beta + 1}{2} = (\alpha+1) \left(\frac{\beta + 1}{2}\right) = \frac{\alpha\beta+ \alpha + \beta+1}{2} = \left\lfloor\frac{\alpha\beta+\alpha+\beta+2}{2}\right\rfloor . \]

Finally, let $B = \hat B(\alpha, \beta)$ where both $\alpha$ and $\beta$ are even. It follows that $|p^{-1}(i)| = \frac{\beta}{2}$ for even $i \in I$ and $|p^{-1}(i)| = \frac{\beta}{2} + 1$ for odd $i \in I$. Thus, 
\[\lvert B \rvert = \left(\frac{\alpha}{2} + 1 \right) \frac{\beta}{2} + \frac{\alpha}{2} \left( \frac{\beta}{2} + 1 \right) = \frac{\alpha\beta + \alpha + \beta}{2}.\qedhere\]
\end{proof}

\begin{remark} \label{rmk:B vs B hat}
If $\alpha,\beta \in \N$ are both even, then by Lemma~\ref{lem:bdry} and Lemma~\ref{lem:area} we have $\lvert \partial \hat B(\alpha,\beta)\rvert=\lvert\partial  B(\alpha,\beta)\rvert$ and $\lvert \hat B(\alpha,\beta)\rvert+1=\lvert B(\alpha,\beta)\rvert$.
\end{remark}

The following lemma allows us to take a nested sequence of subsets of a box, all with the same size boundary. This lemma will be used in Sections~\ref{sec:connected} and~\ref{sec_finite_comps} as well as here. 

 \begin{lemma} \label{lemma_exc_bound}
	Let $B$ be a box of modulus $\{ \alpha, \beta \}$, and let $L$ be an extremal line of $B$. Set  $n = |L \cap B| - 1$.  Then the following are true:
	\begin{enumerate}
		\item If $\alpha, \beta \ge 2$, then there exist sets $B = B_0 \supset B_1 \supset \ldots \supset B_{n}$ such that $|\partial B_i| = |\partial B_{i-1}|$ for all $1 \le i \leq n$.
		\item $\Exc(B) \le n$
		\item If $L'$ is a standard line that intersects $B$, then $\Exc(B)\leq \lvert L'\cap B\rvert$.
	\end{enumerate}
\end{lemma}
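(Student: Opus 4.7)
The plan is to prove (1), (2), (3) in order. Part (2) admits a direct argument that does not require part (1); the sequence produced by (1) will be invoked later for separate purposes.

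For part (1), I would work up to congruence so that $L = \{y+x = d\}$, and list $L \cap B = \{v_0, v_1, \ldots, v_n\}$ in order of increasing $x$-coordinate. Setting $B_i := B \setminus \{v_0, \ldots, v_{i-1}\}$, the key claim is that removing $v_{i-1}$ from $B_{i-1}$ causes exactly one vertex to enter $\partial B_i$ and exactly one to leave, so $\lvert \partial B_i \rvert = \lvert \partial B_{i-1} \rvert$. On the one hand, $v_{i-1}$ itself enters $\partial B_i$, since it has a neighbor on the line $y+x = d-1$ that lies in $B_i$. On the other hand, the specific vertex $M_{i-1}$ on $y+x = d+1$ that is adjacent in $B$ to $v_{i-1}$ (and to $v_{i-2}$, when $i \ge 2$) leaves $\partial B_i$, because in $B_{i-1}$ its only remaining $B$-neighbor is $v_{i-1}$. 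All other candidates to leave the boundary, namely the other vertex $M_i$ on $y+x=d+1$ adjacent to $v_{i-1}$ and the ``side'' neighbor of the corner $v_0$ (on $y-x = b+1$, when $v_0$ is a corner), retain other neighbors in $B_i$: $M_i$ retains $v_i$, and under $\beta \ge 2$ the side neighbor of $v_0$ retains a $B$-neighbor one step along $L$.

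For part (2), I would argue directly. Take WLOG $L = \{y+x = d\}$ and assume first that $\beta \ge 1$. Then $B' := B \setminus L = B(a, b, c, d-1)$ is a nonempty box of modulus $\{\alpha, \beta - 1\}$ with $\lvert B' \rvert = \lvert B \rvert - (n+1)$ and, by Lemma~\ref{lem:bdry}, $\lvert \partial B' \rvert = \lvert \partial B \rvert - 1$. Applying Lemma~\ref{lem:bdry monotone} to the minimal set $WW_{\lvert B' \rvert}$ and to $B'$ (which have the same cardinality) gives
\[
\lvert \partial WW_{\lvert B \rvert - n - 1} \rvert \le \lvert \partial B' \rvert < \lvert \partial B \rvert.
\]
Since $k \mapsto \lvert \partial WW_k \rvert$ is non-decreasing (shown in the proof of Lemma~\ref{lem:bdry monotone}), the least $k$ with $\lvert \partial WW_k \rvert = \lvert \partial B \rvert$ satisfies $k \ge \lvert B \rvert - n$. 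By definition this gives $\Exc(B) \le n$. The degenerate case $\beta = 0$ is handled either by instead applying this argument to an extremal line in the $y-x$ direction, or by direct inspection if $B$ is a single vertex.

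For part (3), I expect to parallel the argument of (2): given a standard line $L'$ intersecting $B$, produce a subset $B'' \subset B$ with $\lvert B'' \rvert = \lvert B \rvert - \lvert L' \cap B \rvert - 1$ and $\lvert \partial B'' \rvert < \lvert \partial B \rvert$, then invoke Lemma~\ref{lem:bdry monotone} as above; alternatively, identify an extremal line $L$ of $B$ with $\lvert L \cap B \rvert \le \lvert L' \cap B \rvert + 1$ and apply (2). The main obstacle is the case analysis in part (1): one must carefully verify which vertices enter and leave $\partial B_i$ at each step, and in particular check that the side neighbor of a corner $v_0$ does not leave alongside $M_{i-1}$ when $v_0$ is removed. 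This is exactly where the hypothesis $\beta \ge 2$ is needed, since it guarantees that this side neighbor has a second $B$-neighbor coming from the parallel extremal line of $B$.
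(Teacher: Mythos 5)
Your proposal is correct and follows essentially the same route as the paper: part (1) by deleting the vertices of $L\cap B$ one at a time in order along $L$ and checking that each deletion adds one boundary vertex (the deleted vertex itself) and removes one (the vertex above it on the parallel line, whose only surviving neighbor it was), with $\beta\ge 2$ used exactly where you use it, to keep the corner's side neighbor in the boundary; part (2) by comparing with the box $B\setminus L$, whose boundary is strictly smaller; and part (3) by passing to the extremal line parallel to $L'$. The only differences are cosmetic bookkeeping (you count $\partial$ directly where the paper counts $N(A)=A\sqcup\partial A$, and you organize the degenerate small-modulus cases slightly differently), so no further comment is needed.
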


\begin{proof}
	Without loss of generality, we may assume that $B = B(a, b, c, d)$ and that $L$ is the line with equation $y = -x + d$. 
	
	We first suppose that $\alpha, \beta \ge 2$, and we prove claims (1) and (2) in this case.
	Let $v_1 = (x_1, y_1), \dots, v_{n+1} = (x_{n+1}, y_{n+1})$ be the vertices of $L \cap B$, ordered so that $x_1<x_2<\ldots< x_{n+1}$. 
	Let $B_i := B \setminus \{v_1, \dots, v_i \}$. 
	It follows from our hypothesis on the modulus of $B$ that, for each $1\leq i\leq n$, the vertices $(x_i, y_i)$, $(x_{i}- 1,y_{i})$, $(x_{i} + 1,y_{i})$  and $(x_{i},y_{i}-1)$ are each contained in $N(B_{{i}})$ and $(x_i, y_i + 1)$ is not. Thus  $N(B_{i-1})=N(B_{i})\sqcup \{(x_{i},y_{i}+1)\}$ for each $1\leq i\leq n$, and so $\lvert \partial B_i\rvert=\lvert \partial  B_{i-1}\rvert$. This shows (1).
	To see (2), 
	note that $B_{n+1}$ is a box whose modulus is either $\{\alpha-1,\beta\}$ or $\{\alpha,\beta-1\}$. By Lemma~\ref{lem:bdry}, $|\partial B_{n+1} | = |\partial B| - 1$. 
	Thus, any minimal set of size $|B_{n+1}| = |B| - (n+1)$ must have boundary of size at most $|\partial B| - 1$. It follows that
	$\Exc(B) \le n$. Thus, (2) follows in this case. 
	
	We now suppose that $\beta := d-c \ge \alpha :=b-a$ and that $\alpha \le 1$, and we prove  (2) for this remaining case. 
	As before, let $L$ be the line with equation $y = -x + d$.
	Since $\alpha\leq 1$, $L$ intersects $B$ in a single vertex $v$. 
	It follows that $B' := B \setminus v$ is a box and is of strictly smaller modulus.
	Thus Lemma \ref{lem:bdry} implies  that  $\lvert \partial B'\rvert < \lvert \partial B\rvert$. It follows from Lemma \ref{lem:bdry monotone} that any minimal set of size at most $\lvert B\rvert-1$ has boundary of size at most $\lvert \partial B\rvert-1$. This implies $\Exc(B)\leq 0$ as required. If $Q$ is any other extremal line of $B$, then $|Q \cap B| \ge |L \cap B| = 1$ and we also get that $\Exc(B) \le |Q \cap B| - 1$. Thus, (2) follows.
	
	Finally, to see (3), suppose $L'$ is a standard line that intersects $B$, and let $L''$ be the extremal line of $B$ that is  parallel to $L'$. It follows that $\lvert L'' \cap B\rvert \leq \lvert L' \cap B\rvert +1$. By what we have shown, we get that $\Exc(B)\leq \lvert L'' \cap B\rvert -1$ as required.
\end{proof}

\begin{remark}
	By considering the box $B=B(2,2)$, which has excess one, we see that the bounds for $\Exc(B)$  given in the previous lemma are sharp.
\end{remark}

To calculate the excess of an arbitrary box, we first compute the excess of a box with modulus $\{\alpha,\beta\}$, where $\lvert \alpha-\beta \rvert\leq  1$.

\begin{lemma}\label{lem:special box1 excess}
For every $n\in \N$, 
\begin{itemize}
\item  $\Exc\bigl(B(2n,2n)\bigr)=n$;
\item  $\Exc\bigl(B(2n+2, 2n+3)\bigr)=n$;
\item  $\Exc\bigl(B(2n+1,2n+1)\bigr)=n$;
\item $\Exc\bigl(B(2n+1,2n+2)\bigr)=n$;
\item $\Exc\bigl(\hat B(2n,2n)\bigr)=n-1$.
\end{itemize}
\end{lemma}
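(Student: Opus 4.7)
I prove each of the five equalities by establishing matching upper and lower bounds on the excess.

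\textit{Upper bounds.} These follow immediately from Lemma~\ref{lemma_exc_bound}(2), which gives $\Exc(B)\le|L\cap B|-1$ for any extremal line $L$ of a box $B$. A direct computation from Notation~\ref{notation:boxes} shows that the shortest extremal line of $B(2n,2n)$, $B(2n+1,2n+1)$, $B(2n+1,2n+2)$, and $B(2n+2,2n+3)$ contains $n+1$ vertices, while every extremal line of $\hat B(2n,2n)$ contains exactly $n$ vertices. This yields $\Exc(B)\le n$ for the first four boxes and $\Exc(\hat B(2n,2n))\le n-1$.

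\textit{Lower bounds.} It suffices to exhibit, for each box $B$, a minimal set $W$ with $|\partial W|=|\partial B|$ and $|W|=|B|-(\text{stated excess})$; then $\Exc(B)\ge|B|-|W|$ by definition. The candidates are Wang--Wang sets: $WW_{r(n)-n}$ for both $B(2n,2n)$ and $\hat B(2n,2n)$ (target boundary $4n+4$); $WW_{r(n)+n+1}$ for $B(2n+1,2n+1)$ (boundary $4n+6$); $WW_{r(n)+2n+2}$ for $B(2n+1,2n+2)$ (boundary $4n+7$); and $WW_{r(n+1)+1}$ for $B(2n+2,2n+3)$ (boundary $4n+9$). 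Each $W$ is minimal by construction; the required cardinality identities are immediate from Lemma~\ref{lem:area} together with $r(n)=2n^2+2n+1$.

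\textit{Main obstacle.} The crux is verifying that each listed Wang--Wang set has the claimed boundary. My strategy is to identify each, up to congruence in $X$, with a box modified by removing consecutive vertices along one extremal line in the $x$-coordinate order used in Lemma~\ref{lemma_exc_bound}(1); together with Lemma~\ref{lem:bdry} this produces the desired boundary value. For example, $WW_{r(n)+n+1}$ consists of the $\ell_1$-ball of radius $n$ together with the first diagonal segment of the next Wang--Wang round, and its enclosing box is congruent to $B(2n+1,2n+1)$; inspection shows $WW_{r(n)+n+1}$ equals this enclosing box with its first $n$ vertices along one extremal line deleted in $x$-coordinate order, so Lemma~\ref{lemma_exc_bound}(1) and Lemma~\ref{lem:bdry} give $|\partial WW_{r(n)+n+1}|=4n+6$. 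The identification for $WW_{r(n)-n}$, which equals $WW_{r(n)}\cong B(2n,2n)$ minus its last diagonal segment, produces the \emph{last} rather than the first $n$ vertices along an extremal line; however, the reflection symmetry of $B(2n,2n)$ across the midline $y+x=n$, which swaps the two endpoints of that extremal line while fixing its middle vertices, reduces this to the setting of Lemma~\ref{lemma_exc_bound}(1). The remaining two Wang--Wang sets, $WW_{r(n)+2n+2}$ and $WW_{r(n+1)+1}$, are handled analogously by identifying them with appropriately sized boxes (of moduli $\{2n+1,2n+2\}$ and $\{2n+2,2n+3\}$ respectively) minus consecutive vertices along one extremal line.
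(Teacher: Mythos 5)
Your proof is correct and follows essentially the same route as the paper: both arguments reduce each box to a Wang--Wang set and invoke Lemma~\ref{lemma_exc_bound} together with Lemma~\ref{lem:bdry} to control how the boundary behaves as consecutive vertices are removed along an extremal line. The only cosmetic differences are that you package the argument as matching upper and lower bounds (Lemma~\ref{lemma_exc_bound}(2) for the upper bound, explicit Wang--Wang witnesses for the lower bound) where the paper uses the identity $\Exc(A)=\max\{k\mid \lvert\partial WW_{\lvert A\rvert-k}\rvert=\lvert\partial WW_{\lvert A\rvert}\rvert\}$, and that the paper dispatches $\hat B(2n,2n)$ in one line via Remark~\ref{rmk:B vs B hat} and Lemma~\ref{lemma_def_same_boundary} rather than repeating the witness argument.
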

\begin{proof}
We first remark that if $A\subset X^0$ is minimal, then $\lvert\partial A\rvert=\lvert\partial WW_{\lvert A\rvert}\rvert$. Thus for any minimal set $A$ of $X$, $\Exc(A)= \max\{k\mid \lvert \partial WW_{\lvert A\rvert-k}\rvert=\lvert \partial WW_{\lvert A\rvert}\rvert\}$. 

Let $Y$ be one of $B(2n,2n)$, $B(2n +2, 2n+3)$ 
, $B(2n+1,2n+1)$ or $B(2n+1,2n+2)$. Then $Y$ is congruent to a Wang--Wang set $WW_m$ for some $m$. It can be verified by Lemma~\ref{lemma_exc_bound}(1)
 that $\lvert \partial WW_{m-i}\rvert = \lvert \partial WW_{m}\rvert$ if and only if $i\leq n$. 
This gives the required formula for the excess of $Y$. Finally, by Lemma~\ref{lemma_def_same_boundary} and Remark~\ref{rmk:B vs B hat}, $\Exc\bigl(\hat B(2n,2n)\bigr)=\Exc\bigl(B(2n,2n)\bigr)-1=n-1$.
\end{proof}

We are now ready to calculate the excess of any box.

\begin{thm} \label{thm:box_excess}
Suppose $\alpha,\beta\in \N$. Let $r\coloneqq\frac{\alpha+\beta}{2}$, $k\coloneqq\frac{\lvert \beta-\alpha\rvert}{2}$.  Then
\[\Exc\bigl(B(\alpha,\beta)\bigr)=\left \lfloor \frac{\lfloor r\rfloor -k^2}{2}\right\rfloor. \]
Moreover, when $\alpha,\beta\in \N$ are even, then 
\[\Exc\bigl(\hat B(\alpha,\beta)\bigr)=\frac{r-k^2-2}{2}.\]
\end{thm}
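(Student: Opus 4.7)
The plan is to reduce the formula to the base cases already handled in Lemma~\ref{lem:special box1 excess}, by exploiting that $\lvert\partial B(\alpha,\beta)\rvert$ depends only on $\alpha+\beta$ (Lemma~\ref{lem:bdry}), together with Lemma~\ref{lemma_def_same_boundary}. Writing $s=\alpha+\beta$, I would first choose for $B(\alpha,\beta)$ a ``square-ish'' reference box with the same boundary whose excess is listed in Lemma~\ref{lem:special box1 excess}. The natural choice is $B(\lfloor r\rfloor,\lceil r\rceil)$: when $s$ is even, this is $B(r,r)$, which is covered by $B(2n,2n)$ or $B(2n+1,2n+1)$ according to the parity of $r$; when $s$ is odd, this is $B(\lfloor r\rfloor,\lfloor r\rfloor+1)$, which is covered by $B(2n+1,2n+2)$ or $B(2n+2,2n+3)$ according to the parity of $\lfloor r\rfloor$. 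In each of the four resulting subcases the reference has boundary size $s+4$ and a closed-form excess (essentially $\lfloor\lfloor r\rfloor/2\rfloor$, with one subcase adjusted by $-1$).

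Next, Lemma~\ref{lemma_def_same_boundary} yields $\Exc\bigl(B(\alpha,\beta)\bigr)=\Exc(B_{\mathrm{ref}})+\lvert B(\alpha,\beta)\rvert-\lvert B_{\mathrm{ref}}\rvert$. I would compute both cardinalities using Lemma~\ref{lem:area}, substituting $\alpha=r-k$, $\beta=r+k$ when $s$ is even, and the integer analogues $\alpha=\lfloor r\rfloor-k'$, $\beta=\lfloor r\rfloor+k'+1$ with $k=k'+\tfrac12$ when $s$ is odd. A short calculation then shows that $\lvert B(\alpha,\beta)\rvert-\lvert B_{\mathrm{ref}}\rvert$ reduces to $-k^2/2$ up to correction terms of size at most $1$ coming from the floor in Lemma~\ref{lem:area}, and combining with $\Exc(B_{\mathrm{ref}})$ gives $\lfloor(\lfloor r\rfloor-k^2)/2\rfloor$ in every subcase.

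For the second formula, since $\hat B(\alpha,\beta)$ and $B(\alpha,\beta)$ share a boundary while $\lvert\hat B(\alpha,\beta)\rvert=\lvert B(\alpha,\beta)\rvert-1$ by Remark~\ref{rmk:B vs B hat}, a single application of Lemma~\ref{lemma_def_same_boundary} gives $\Exc(\hat B(\alpha,\beta))=\Exc(B(\alpha,\beta))-1$. In this regime $\alpha$ and $\beta$ are both even, forcing $r$ and $k$ to be integers of the same parity, so $r-k^2$ is even and the main formula simplifies to $(r-k^2)/2$; subtracting one gives the claimed $(r-k^2-2)/2$.

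The main obstacle is bookkeeping rather than ideas: there are essentially four subcases indexed by $s\bmod 4$ (equivalently by the parities of $r$ and $k$), and in each one both the reference-box excess from Lemma~\ref{lem:special box1 excess} and the floor appearing in Lemma~\ref{lem:area} shift slightly. The work consists of verifying that the same compact expression $\lfloor(\lfloor r\rfloor-k^2)/2\rfloor$ is produced after every substitution---conceptually routine, but requiring care not to conflate subcases or lose a $\pm 1$.
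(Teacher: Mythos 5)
Your proposal is correct and follows essentially the same route as the paper's proof: reduce to the near-square reference boxes of Lemma~\ref{lem:special box1 excess} via Lemma~\ref{lem:bdry} and Lemma~\ref{lemma_def_same_boundary}, compute the cardinality difference with Lemma~\ref{lem:area} split by the parity of $\alpha+\beta$, and obtain the $\hat B$ case from Remark~\ref{rmk:B vs B hat}. The remaining work you flag (the floor bookkeeping across the parity subcases) is exactly the content of the paper's Cases A and B, and your parity observation that $r-k^2$ is even when $\alpha,\beta$ are both even correctly justifies the final formula.
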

\begin{proof}
By applying an automorphism of $X$, we may assume that $\beta \geq \alpha$. Note that $\alpha = r-k$ and $\beta = r+k$. We break the argument into two cases. 
\\
\textit{Case A: $\alpha$ and $\beta$ have the same parity.}

In this case, $r$ and $k$ are both integers.
Lemma~\ref{lem:area} yields the equations: 
\[\lvert B(r,r)\rvert = \left\lfloor\frac{r^2+2r+2}{2}\right\rfloor=\left\lfloor\frac{r^2}{2}\right\rfloor+r+1, \]
\[\lvert B(r - k,r + k)\rvert = \left\lfloor\frac{r^2 - k^2 + 2r + 2}{2}\right\rfloor=\left\lfloor\frac{r^2 - k^2}{2}\right\rfloor+r+1. \] 
Setting $r=2m+\epsilon$ where $\epsilon\in \{0,1\}$ and $m\in \bZ$, we get the equation:
\[\left\lfloor\frac{r^2 - k^2}{2}\right\rfloor-\left\lfloor\frac{r^2}{2}\right\rfloor=
\left\lfloor\frac{\epsilon^2 - k^2}{2}+2m^2+2m\epsilon\right\rfloor-\left\lfloor\frac{\epsilon^2}{2}+2m^2+2m\epsilon\right\rfloor=\left\lfloor\frac{\epsilon - k^2}{2}\right\rfloor\]
It follows from Lemma~\ref{lem:bdry} that $\lvert \partial B(r,r)\rvert=\lvert \partial B(r-k,r+k)\rvert$. Lemma~\ref{lemma_def_same_boundary} then implies
\begin{align*}
\Exc(B(\alpha,\beta)) &= \Exc\bigl(B(r-k,r+k)\bigr)=\Exc\bigl(B(r,r)\bigr)-\lvert B(r,r)\rvert +\lvert B(r-k,r+k)\rvert\\
&=\Exc\bigl(B(r,r)\bigr)+\left\lfloor\frac{r^2 - k^2}{2}\right\rfloor-\left\lfloor\frac{r^2}{2}\right\rfloor=\left\lfloor\Exc\bigl(B(r,r)\bigr)+\frac{\epsilon - k^2}{2}\right\rfloor.
\end{align*}
When  $r$ is even (and so $\epsilon=0$), $\Exc\bigl( B(r,r) \bigr) = \frac{r}{2}$ by Lemma~\ref{lem:special box1 excess}. Substituting this  into the above equation yields
	\begin{align*}
		\Exc\bigl(B(\alpha,\beta)\bigr) =  \left\lfloor \frac{r - k^2}{2} \right\rfloor = \left\lfloor \frac{ \lfloor r \rfloor - k^2}{2} \right\rfloor.
	\end{align*}
When  $r$ is odd (and so $\epsilon=1$), $\Exc\bigl( B(r,r) \bigr) = \frac{r - 1}{2}$ by Lemma~\ref{lem:special box1 excess}. Thus, 
	\begin{align*}
		\Exc\bigl(B(\alpha,\beta)\bigr) &=\left\lfloor \frac{r - 1}{2} +\frac{1 - k^2}{2}\right\rfloor =\left\lfloor\frac{\lfloor r\rfloor-k^2}{2}\right\rfloor.
	\end{align*}
\\
\textit{Case B: $\alpha$ and $\beta$ have different parity.}
We can write $r=s + \frac{1}{2}$ and $k=t + \frac{1}{2}$, for some $s,t\in \bZ$. Note that $\lfloor r\rfloor =s$, so we need to show 
\begin{equation}
\Exc\bigl(B(\alpha,\beta)\bigr)=\left \lfloor \frac{s -t^2-t-\frac{1}{4}}{2}\right\rfloor=\left \lfloor \frac{s }{2}-\frac{1}{8}\right\rfloor-\frac{t^2+t}{2} 
 \label{eq:need_to_show}
\end{equation}
	The last equality follows since $t^2+t$ is even.
	Lemma~\ref{lem:area} now yields
\begin{eqnarray*}
 \left\lvert B\left(s,s+1\right)\right\rvert &=& \left\lfloor \frac{s^2+3s+3}{2}\right\rfloor \\
 \lvert B(\alpha, \beta) \rvert = \lvert B(s-t,s+t+1)\rvert &=& \left\lfloor\frac{s^2+3s+3-t^2-t}{2}\right\rfloor=\left\lvert B\left(s,s+1\right)\right\rvert-\frac{t^2+t}{2},
\end{eqnarray*}
where  the last equality follows again because $t^2+t$ is even.

As in Case~A,
\begin{eqnarray*}
 \Exc\bigl(B(s-t, s+t+1)\bigr)  &=& \Exc\left(B\left(s,s+1\right)\right)-\left\lvert B\left(s,s+1\right)\right\rvert +\left\lvert B(s-t,s+t+1)\right\rvert\\
 &=& \Exc\left(B\left(s,s+1\right)\right) -\frac{t^2+t}{2},
\end{eqnarray*}
 So by Equation~\ref{eq:need_to_show}, 
 we need only to verify that $\Exc\left(B\left(s,s+1\right)\right)=\left \lfloor \frac{s }{2}-\frac{1}{8}\right\rfloor$, or equivalently: 
 \[\left \lfloor \frac{s }{2}-\frac{1}{8}-\Exc\left(B\left(s,s+1\right)\right)\right\rfloor=0.\] 
Lemma~\ref{lem:special box1 excess} ensures $\Exc(B(s, s+1)) = \frac{s}{2} - 1$ when $s$ is even, and   $\Exc(B(s, s+1)) = \frac{s}{2}-\frac{1}{2}$ when $s$ is odd, so the preceding equation is satisfied in both cases.

Finally, Lemma~\ref{lemma_def_same_boundary} and Remark~\ref{rmk:B vs B hat} imply that  $\Exc\bigl(\hat B(\alpha,\beta)\bigr)=\Exc\bigl(B(\alpha,\beta)\bigr) - 1 =\frac{r-k^2-2}{2}$  for even $\alpha,\beta\in \bN$.
\end{proof}

\begin{remark}\label{rem:min boxes}
	By combining Lemma~\ref{lem:+ve exc minimal} and Theorem~\ref{thm:box_excess}, we have a  complete characterization of which boxes are minimal. 
\end{remark}

%%%%%%%%%%%%%%%%%%%
%%%%%%%%%%%%%%%%%%%
\section{Characterizing minimal sets}\label{sec:characterization}
%%%%%%%%%%%%%%%%%%%
%%%%%%%%%%%%%%%%%%%
\emph{In this section we prove Theorem~\ref{thm:minimal set characterization}, giving our first characterization of  minimal sets in the graph $X = \Z^2_{\ell_1}$. We also prove Proposition~\ref{prop:sat+conn->box}, which characterizes boxes as precisely the  sets that  are both saturated and $\ell_\infty$-connected. 
}
%\mymarpar{Ivan: Reworded as the description before was in incorrect order.}
\vspace{.3cm}

We first explain how to deduce Theorem \ref{thm:minimal set characterization} from the following proposition, whose proof occupies the remainder of this section.
\begin{prop}\label{prop:finminsat -> box}
	If $A\subset X$ is  minimal and saturated, then it is a box.
\end{prop}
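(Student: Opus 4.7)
The plan is to prove the statement by contradiction: assuming $A$ is minimal and saturated but $A \neq \enc(A)$, I aim to exhibit a vertex $v \in \enc(A) \setminus A$ whose addition to $A$ does not strictly increase the vertex boundary, contradicting saturation. Since $\enc(A)$ is a box by definition, this will force $A = \enc(A)$.

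I would first establish the structural precursor alluded to in the introduction: for any minimal set $A$, the complement $X^0 \setminus A$ is a union of cones (translates of the standard quarter-plane cone). The proof would proceed via a swap argument---if some $u \in X^0 \setminus A$ is not contained in any cone disjoint from $A$, then each of the four diagonal quadrants around $u$ meets $A$, and swapping $u$ with a suitably chosen extremal vertex of $A$ in some quadrant yields a set of the same cardinality but strictly smaller boundary, contradicting minimality. This swap analysis would use the forbidden configurations of Lemma~\ref{lemma_forbidden} together with the boundary-change formula from Section~\ref{sec:saturated}.

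With the cone structure in hand, I would choose $v \in \enc(A) \setminus A$ to be lexicographically maximal in the rotated coordinates $(p, q) = (y - x,\, y + x)$. Extremality of $v$ forces the two ``lex-larger'' neighbors of $v$ in the grid to lie in $A$ when they lie inside $\enc(A)$, and to lie in $\partial A$ when they fall just outside (since the adjacent extremal line of $\enc(A)$ must, by minimality of the enclosing box, contain a vertex of $A$ within grid-distance one). In either case, both lex-larger neighbors of $v$ lie in $N(A) = A \cup \partial A$. For the two ``lex-smaller'' neighbors of $v$, the cone structure of $X^0 \setminus A$ should rule out both simultaneously lying outside $N(A)$: otherwise $v$ together with these two neighbors would form an ``L''-shaped missing configuration whose position inside $\enc(A)$ is incompatible with being covered by cones that avoid $A$ (since any such cone extending from $v$ in the remaining admissible direction would have to contain one of the lex-larger neighbors forced above into $A$). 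Consequently $v$ has at least three neighbors in $N(A)$, and the boundary-change computation yields $|\partial(A \cup \{v\})| \leq |\partial A| - 1 + 1 = |\partial A|$, contradicting the saturation of $A$.

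The main obstacle I expect is rigorously establishing the cone structure of $X^0 \setminus A$ in the first step: the swap argument requires delicate case analysis to verify that the swap indeed strictly reduces the boundary and does not introduce new boundary vertices elsewhere, and handling all four cone orientations and their positions cleanly is where most of the work lies. A secondary difficulty is carefully ruling out each cone orientation for the cone containing $v$ in the third step, particularly when $v$ sits near a corner of $\enc(A)$ where the extremality bookkeeping becomes more subtle.
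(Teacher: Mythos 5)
There is a genuine gap, and it sits exactly where your plan leans hardest. Your first step asserts that the complement of any minimal set is a union of cones. In the paper this is Lemma~\ref{lemma_removing_cone}, and it is proved \emph{after} and \emph{by means of} Proposition~\ref{prop:finminsat -> box}: the induction there runs over $\lvert \enc(A)\setminus A\rvert$ and invokes Lemma~\ref{lem:nested_sequence_to_enc_box} and Theorem~\ref{thm:minimal set characterization}, both of which rest on the proposition you are trying to prove. So you cannot cite the cone structure; you must supply an independent proof, and the sketched swap argument does not obviously close. Adding $u$ to $A$ can create up to three new boundary vertices (net change $+3-1$ if $u\in\partial A$, worse otherwise), while deleting an ``extremal'' vertex $w$ of $A$ need not delete any boundary vertex at all, since each element of $\partial A$ adjacent to $w$ may also be adjacent to other vertices of $A$. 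Nothing in your sketch controls this trade-off, and getting it to work is at least as delicate as the proposition itself.

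The endgame also has an unproved step. Your boundary count $\lvert\partial(A\cup\{v\})\rvert\le\lvert\partial A\rvert-1+1$ requires both that $v\in\partial A$ and that at most one neighbor of $v$ lies outside $N(A)$. When a lex-larger neighbor of $v$ falls outside $\enc(A)$ you place it in $\partial A$ ``since the adjacent extremal line must contain a vertex of $A$ within grid-distance one'' --- but the definition of the enclosing box only guarantees a vertex of $A$ \emph{somewhere} on that extremal line, not adjacent to that particular neighbor; its only candidate neighbors in $A$ are the two diagonal ones, which need not lie in $\enc(A)$ when $v$ is near a corner. For contrast, the paper avoids both difficulties by arguing entirely from saturation first: Lemmas~\ref{lem:thick box} and~\ref{lem:comp_is_box} show every $\ell_\infty$-component of a saturated set is a box using only the forbidden configurations of Lemma~\ref{lemma_forbidden} (a local ``Swiss cross'' propagation), and Lemma~\ref{lem:saturated_connected_components} uses minimality only once, via a rearrangement of components that strictly reduces the boundary if there is more than one $\ell_\infty$-component. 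That ordering --- saturation first, minimality last --- is what lets the cone structure be deduced afterwards rather than assumed up front.
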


\begin{definition}
	Given a finite set $A\subset X^0$, the \emph{enclosing box}, denoted $\enc(A)$, is the smallest box that contains $A$.
\end{definition}
The enclosing box of a set is well-defined, as the intersection of boxes is itself a box.
The enclosing box of a minimal set is the unique smallest saturated set containing it:

\begin{lemma} \label{lem:nested_sequence_to_enc_box}
	Let $A\subset X^0$ be a minimal set, and let $A = A_0 \subset A_1 \subset A_2 \subset \dots $ be a (possibly finite) maximal sequence of nested minimal sets such that $|A_{i+1}| = |A_i|+1$  for each $i$. Then $A_N=\enc(A)$ for some $N \ge 0$ and $|\partial A| = |\partial (\enc(A))|$.
\end{lemma}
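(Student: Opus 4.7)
The plan is to identify the first index $N$ in the maximal sequence at which $A_N$ is saturated, use Proposition~\ref{prop:finminsat -> box} to conclude that $A_N$ is a box, and then show this box coincides with $B := \enc(A)$. The main obstacle is establishing the inclusion $A_N \subseteq B$; everything else follows from careful applications of Lemma~\ref{lem:bdry monotone}.

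The first observation is that the boundary size is constant through non-saturated terms of the sequence. Indeed, if $A_i$ is non-saturated then some $v \in X^0 \setminus A_i$ satisfies $|\partial(A_i \cup \{v\})| \le |\partial A_i|$, and combined with the minimality of $A_i$ via Lemma~\ref{lem:bdry monotone} this inequality is an equality and $A_i \cup \{v\}$ is itself minimal. Hence the minimum boundary size over sets of cardinality $|A_i|+1$ equals $|\partial A_i|$, so every minimal set of that cardinality---in particular the next term $A_{i+1}$---has boundary $|\partial A_i|$. Iterating yields $|\partial A_i| = |\partial A|$ for all $i$ up to the first saturated term $A_N$; such an $N$ is finite because $(|\partial WW_m|)_m$ is nondecreasing, unbounded, and changes by at most $1$ at each step, so the cardinalities $m$ with $|\partial WW_m| = |\partial A|$ form a finite interval. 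By Proposition~\ref{prop:finminsat -> box}, $A_N$ is a box, and $A \subseteq A_N$ gives $B \subseteq A_N$.

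The main technical work lies in the reverse inclusion, which reduces to the following key claim: for any finite $A' \subseteq B$ and any $v \in X^0 \setminus B$, one has $|\partial(A' \cup \{v\})| > |\partial A'|$. A direct expansion gives $\partial(A' \cup \{v\}) = (\partial A' \setminus \{v\}) \cup (V \setminus A')$, where $V$ is the set of the four neighbors of $v$, so the change in boundary size equals $|V \setminus (A' \cup \partial A')|$, minus $1$ if $v \in \partial A'$ and $0$ otherwise. Since $A' \subseteq B$ implies $A' \cup \partial A' \subseteq B \cup \partial B$, it suffices to exhibit at least two neighbors of $v$ outside $B \cup \partial B$. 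Writing $B = B(a,b,c,d)$ and assuming without loss of generality that $v = (x,y)$ satisfies $y+x > d$, the two outward neighbors $(x+1,y)$ and $(x,y+1)$ both have coordinate sum $\geq d+2$, and each of their own neighbors in turn has coordinate sum $\geq d+1 > d$; hence neither of these two neighbors of $v$ is adjacent to $B$, and both therefore lie outside $B \cup \partial B$. The change in boundary is then at least $2 - 1 = 1 > 0$.

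Granted this claim, the lemma follows by induction on $i \le N$: the base case $A_0 = A \subseteq B$ is immediate, and in the inductive step $i < N$ the non-saturation of $A_i$ gives $|\partial A_{i+1}| = |\partial A_i|$ by the first observation, while the key claim (with $A' := A_i$) says that any vertex outside $B$ would strictly increase the boundary. Hence the vertex added in passing from $A_i$ to $A_{i+1}$ lies in $B$, so $A_{i+1} \subseteq B$. Iterating gives $A_N \subseteq B$, which combined with $B \subseteq A_N$ yields $A_N = \enc(A)$, and the identity $|\partial A| = |\partial \enc(A)|$ follows from the constant-boundary observation.
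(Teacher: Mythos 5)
Your proof is correct, and its first half --- showing that a first saturated term $A_N$ of the sequence exists, that $|\partial A_N|=|\partial A|$, and that $A_N$ is a box by Proposition~\ref{prop:finminsat -> box} --- is essentially the paper's argument. Where you genuinely diverge is in identifying that box with $\enc(A)$. The paper takes the inclusion $\enc(A)\subseteq A_N$ for free (any box containing $A$ contains $\enc(A)$) and rules out proper containment by comparing boundaries of nested boxes via the formula of Lemma~\ref{lem:bdry}: if $\enc(A)\subsetneq A_N$ then $|\partial (\enc(A))|<|\partial A_N|=|\partial A|$, contradicting Lemma~\ref{lem:bdry monotone}. You instead prove the reverse inclusion $A_N\subseteq \enc(A)$ directly, via a new ingredient: a relative version of Lemma~\ref{lem:box_is_sat} asserting that for any $A'\subseteq B$ and $v\notin B$ one has $|\partial(A'\cup\{v\})|>|\partial A'|$, so that a boundary-preserving nested sequence starting inside a box can never leave it. Your computation of the boundary change as $|V\setminus N(A')|$ minus an indicator is right, and the two outward neighbours of $v$ do lie outside $N(B)\supseteq N(A')$, so the claim holds. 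The paper's route is shorter because Lemma~\ref{lem:bdry} is already available; yours is more self-contained and the relative-saturation claim could be of independent use. One small point to make explicit: when $A_i$ is not saturated, it is the maximality of the sequence that guarantees $A_{i+1}$ exists (since $A_i\cup\{v\}$ is then minimal, a sequence terminating at $A_i$ could be extended); you use this implicitly when iterating the constant-boundary observation.
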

\begin{proof}
	Let $N \ge 0$ be the largest integer such that $|\partial A_N| = |\partial A|$. 
	Such an integer exists as there are minimal sets with arbitrarily large boundaries (see the proof of Lemma~\ref{lem:excess_well_defined} for instance).
	To prove the lemma, it suffices to show that $A_N=\enc(A)$. 
	
	We first claim that $A_N$ must be saturated. For a contradiction, suppose otherwise. It follows there exists a set $A_N' \supset A_N$ such that $|\partial A_N'| = |\partial A_N|$ and $|A_N'| = |A_N|+1$.
	Furthermore, by Lemma \ref{lem:bdry monotone}, $A_N'$ is minimal.  
	By the maximality of our nested sequence, it contains a set $A_{N+1}$. 
	As $A_{N+1}$ is minimal and $|A_{N+1}| = |A_N'|$, it follows that $|\partial A_{N+1}| = |\partial A_N|$, contradicting our choice of $N$.
	
	As $A_N$ is saturated and minimal, it is a box by Proposition~\ref{prop:finminsat -> box}. If $\enc(A)\neq A_N$, then $\enc(A)$ must be a proper subset of $A_N$. However, in this case, we then have that $|\partial (\enc(A))| < |\partial A_N|$ by Lemma~\ref{lem:bdry}, contradicting our choice of $N$. Thus $A_N=\enc(A)$, and the lemma follows.
\end{proof}

We are ready to prove our first characterization of minimal sets, using Proposition \ref{prop:finminsat -> box}.
\begin{thm}\label{thm:minimal set characterization}
	Let $A\subset X^0$, $N\coloneqq \lvert \enc(A)\setminus A \rvert$ and $E=\Exc(\enc(A))$. Then $A$ is minimal if and only if $\lvert \partial A\rvert = \lvert \partial (\enc(A))\rvert$ and $E\geq N$.
\end{thm}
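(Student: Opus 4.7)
The plan is to derive the theorem essentially for free from two previously established ingredients: Lemma~\ref{lem:nested_sequence_to_enc_box}, which identifies $\enc(A)$ as the terminal element of any maximal boundary-preserving nested sequence of minimal sets starting at $A$, and the elementary accounting of excess from Section~\ref{sec:Excess}. All of the difficult structural content is already packaged into those results, so the argument should just be a matter of reassembling them.

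For the forward implication, I would begin with a minimal set $A$ and pass to a maximal nested sequence $A = A_0 \subset A_1 \subset \dots$ of minimal sets with $|A_{i+1}| = |A_i| + 1$. Lemma~\ref{lem:nested_sequence_to_enc_box} immediately yields the first required equality, $|\partial A| = |\partial \enc(A)|$. With this in hand, Lemma~\ref{lemma_def_same_boundary} applied to $A$ and $\enc(A)$ gives
\[
\Exc(A) - \Exc(\enc(A)) \;=\; |A| - |\enc(A)| \;=\; -N,
\]
so $\Exc(A) = E - N$. Since $A$ is minimal, Lemma~\ref{lem:+ve exc minimal} forces $\Exc(A) \geq 0$, and hence $E \geq N$.

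For the reverse implication, assume $|\partial A| = |\partial \enc(A)|$ and $E \geq N$. The very same application of Lemma~\ref{lemma_def_same_boundary} — which requires only the boundary equality we are now assuming — yields $\Exc(A) = E - N \geq 0$, and Lemma~\ref{lem:+ve exc minimal} then concludes that $A$ is minimal.

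I do not anticipate any real obstacle in this proof. The nontrivial work has already been done: Proposition~\ref{prop:finminsat -> box} underlies Lemma~\ref{lem:nested_sequence_to_enc_box}, and Theorem~\ref{thm:box_excess} makes the quantity $E$ genuinely computable, so the present theorem only has to glue these together using the near-tautological identity $\Exc(A) = \Exc(\enc(A)) - N$ that holds whenever $A$ and $\enc(A)$ have the same boundary. In fact, the only subtlety worth noting is that Lemma~\ref{lemma_def_same_boundary} has no minimality hypothesis on its inputs, which is exactly what permits its use in the reverse direction where we do not yet know $A$ is minimal.
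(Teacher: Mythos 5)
Your proof is correct and follows essentially the same route as the paper: both directions hinge on Lemma~\ref{lem:nested_sequence_to_enc_box} for the boundary equality, and the numerical part is the same excess bookkeeping, which you route through Lemma~\ref{lemma_def_same_boundary} and Lemma~\ref{lem:+ve exc minimal} where the paper unwinds Definition~\ref{def:excess} directly and invokes Lemma~\ref{lem:bdry monotone}. Your closing observation --- that Lemma~\ref{lemma_def_same_boundary} carries no minimality hypothesis and so is legitimately usable in the reverse direction --- is exactly the right point to check, and it holds.
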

\begin{proof}
	First suppose $A$ is minimal. 
	By Lemma~\ref{lem:nested_sequence_to_enc_box}, $|\partial A| = |\partial \enc(A)|$. Thus by Definition~\ref{def:excess}, $E \ge N$. Conversely, suppose that $\lvert \partial A\rvert = \lvert \partial (\enc(A))\rvert$ and $E \geq N$. It follows from Definition~\ref{def:excess} that there exists a minimal set $C$ such that $\lvert \partial C \rvert =\lvert \partial (\enc(A))\rvert = \lvert \partial A\rvert$ and \[\lvert C\rvert=\lvert \enc(A)\rvert-E\leq \lvert \enc(A)\rvert-N=\lvert A\rvert.\]  
	By  Lemma~\ref{lem:bdry monotone}, $A$ is minimal.
\end{proof}

We now begin our proof of Proposition \ref{prop:finminsat -> box}. We first establish some terminology regarding two metrics on $X^0$: the $\ell_1$-metric and the $\ell_\infty$-metric.
We say that two vertices in $X^0$ are \emph{adjacent} if their distance is exactly $1$ in the $\ell_1$-metric.  An \emph{$\ell_1$-path} is a sequence $(u_i)_{i=0}^n$ of elements of $X^0$ such that $u_{i-1}$ and $u_{i}$ are adjacent for every $0 < i \le n$. We say  $A\subset X^0$ is \emph{connected} if any $x,y\in A$ are the endpoints of an $\ell_1$-path contained in $A$. 

We recall the {\it $\ell_\infty$-metric} (also known as the  Chebyshev, maximum, or chessboard metric) on~$X^0$, which is defined by \[d_\infty\bigl((x,y),(x',y')\bigr)\coloneqq\max\bigl(\lvert x-x'\rvert, \lvert y-y'\rvert\bigr).\] 
Two vertices  $u,v\in X^0$ are said to be \emph{$\ell_\infty$-adjacent} if $d_\infty(u,v)=1$.
An \emph{$\ell_\infty$-path} is a sequence $(u_i)_{i=0}^n$ of elements of $X^0$ such that $u_{i-1}$ and $u_{i}$ are $\ell_\infty$-adjacent for every $0 < i \le n$. 
 A subset $A\subset X^0$ is \emph{$\ell_\infty$-connected} if any pair of 
vertices in $A$  
  are the endpoints of an $\ell_\infty$-path contained in $A$. An  \emph{$\ell_\infty$-component} of $A$ is a maximal  $\ell_\infty$-connected subset of $A$.

A \emph{standard line} is a set of the form $\{(x,y)\mid y-x= w\}$ or
$\{(x,y)\mid y+x=w\}$ for some $w\in \bZ$. The next two lemmas will be needed to prove Proposition~\ref{prop:sat+conn->box}, our characterization of boxes.
\begin{lemma}\label{lem:thick box}
	Suppose $A\subset X^0$ is saturated and $C\subseteq A$ is an $\ell_\infty$-component of $A$ that is not contained in a standard line. Then $C$ contains a pair of adjacent vertices.
\end{lemma}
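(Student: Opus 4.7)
The plan is to proceed by contradiction: assume that no two vertices of $C$ are $\ell_1$-adjacent, so every $\ell_\infty$-adjacent pair in $C$ must differ by a vector in $\{(\pm 1, \pm 1)\}$, i.e., the two vertices lie diagonally opposite. Call such an $\ell_\infty$-adjacency \emph{positive} if the difference vector lies in $\{\pm(1,1)\}$ and \emph{negative} if it lies in $\{\pm(1,-1)\}$.

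The first and main step is to show that, since $C$ is not contained in any standard line, there exist three vertices $p_1, p_2, p_3 \in C$, consecutive along some $\ell_\infty$-path in $C$, for which $p_1 \sim p_2$ has one sign and $p_2 \sim p_3$ has the other. Indeed, if every $\ell_\infty$-adjacency in $C$ were positive, then any two vertices of $C$ connected by an $\ell_\infty$-path in $C$ would differ by a vector parallel to $(1,1)$, forcing $C$ to lie on a single line of slope $+1$, which is a standard line, contradicting the hypothesis; the symmetric argument rules out the case where all adjacencies are negative. Hence both signs appear among the $\ell_\infty$-adjacencies in $C$, and by traversing an $\ell_\infty$-path between a positive adjacency and a negative one, some consecutive pair of steps along the path must have opposite signs, yielding the required triple.

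After applying an automorphism of $X$, we may assume $p_1 = (0,0)$, $p_2 = (1,1)$, and $p_3 \in \{(2,0), (0,2)\}$. Set $w := (1,0)$ in the first subcase and $w := (0,1)$ in the second. In either situation $w$ is $\ell_1$-adjacent to each of $p_1, p_2, p_3$; moreover $w \notin A$, since otherwise $w$ would be $\ell_\infty$-adjacent (in fact $\ell_1$-adjacent) to $p_1$, and thus would lie in the same $\ell_\infty$-component $C$ and form an $\ell_1$-adjacent pair in $C$, contradicting our standing assumption.

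To finish, I would compute that $|\partial(A \cup \{w\})| \le |\partial A|$, contradicting the saturatedness of $A$. Observe that $w \in \partial A$, so $w \in N(A)$, and three of the four $\ell_1$-neighbors of $w$ already lie in $A \subseteq N(A)$. Consequently $N(A \cup \{w\})$ contains at most one vertex not in $N(A)$, namely the remaining $\ell_1$-neighbor of $w$ (if it was not already in $N(A)$), while $|A \cup \{w\}| = |A| + 1$. Therefore $|\partial(A \cup \{w\})| \leq |\partial A|$, giving the required contradiction. The main obstacle is the second step — extracting the sign-changing triple from the $\ell_\infty$-connectedness of $C$ — but this reduces to a straightforward path-traversal argument.
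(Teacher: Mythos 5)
Your proof is correct and follows essentially the same route as the paper: both arguments locate a ``bend'' in a diagonal $\ell_\infty$-path of $C$ (three consecutive vertices not on a common standard line) and then use saturation to force the lattice point adjacent to all three into $A$, hence into $C$. The only cosmetic difference is that you inline the boundary count $|\partial(A\cup\{w\})|\le|\partial A|$, whereas the paper delegates this to its forbidden-configuration lemma (Lemma~\ref{lemma_forbidden}).
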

\begin{proof}
	As $C$ is $\ell_\infty$-connected and not contained in a standard line, it contains an $\ell_\infty$-path $\gamma=(u_0, \dots u_n)$  such that $u_0$ and $u_n$ do not lie on the same standard line. 
	We assume no $u_i$ is adjacent to $u_{i+1}$, otherwise we are done. %We also  assume $u_{i-1}\neq u_i$ for all $i$. 
	Thus there exists an $i$ such that $u_{i-1}$, $u_i$ and $u_{i+1}$  do not lie in the same standard line. By applying an 
	automorphism of $X$,
	we may assume $u_{i-1}=(1,-1)$, $u_i=(0,0)$ and $u_{i+1}=(1,1)$.  
	As $A\supseteq C$ is saturated,
	Lemma~\ref{lemma_forbidden} ensures that $(1,0)$ is contained in $A$. As $C$ is an $\ell_{\infty}$-component of $A$, $(1,0)\in C$.  
	We are done as $(1,0)$ is adjacent to $u_i$.
\end{proof}

\begin{lemma}\label{lem:comp_is_box}
	If $A\subset X^0$ is a finite saturated set, then every  $\ell_\infty$-component of $A$ is a box.
\end{lemma}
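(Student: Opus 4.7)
The plan is to establish $C = B(a,b,c,d)$, where $a = \min\{y - x : (x,y) \in C\}$, $b = \max\{y-x : (x,y) \in C\}$, $c = \min\{y + x : (x,y) \in C\}$, and $d = \max\{y + x : (x,y) \in C\}$. The inclusion $C \subseteq B(a,b,c,d)$ is immediate from the definitions, so the substance is the reverse inclusion.

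First I would handle the degenerate case where $C$ lies on a single standard line (i.e.\ $a = b$ or $c = d$). On a standard line of slope $\pm 1$, two distinct lattice points are $\ell_\infty$-adjacent if and only if their $x$-coordinates differ by exactly one, so the $\ell_\infty$-connectedness of $C$ along that line amounts to $C$ being a consecutive block in the natural line parameter. That block is precisely $B(a,b,c,d)$, finishing this case.

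In the generic case $a < b$ and $c < d$, Lemma~\ref{lem:thick box} guarantees that $C$ contains an $\ell_1$-adjacent pair. Assuming for contradiction that $B(a,b,c,d) \setminus C$ is nonempty, I can pick some $p \in B(a,b,c,d) \setminus C$ that is $\ell_\infty$-adjacent to a vertex of $C$: this uses that $B(a,b,c,d)$ is $\ell_\infty$-connected and that $C$ is a nonempty proper subset, so a frontier vertex exists. Since $C$ is a maximal $\ell_\infty$-connected subset of $A$, any vertex $\ell_\infty$-adjacent to $C$ that lies in $A$ already belongs to $C$; hence $p \notin A$. Because $p$ sits inside the enclosing box while the values $a, b, c, d$ are realized by vertices of $C$, enough of the vertices near $p$ are forced to lie in $C$ (hence in $A$), and enough of $p$'s non-$A$ neighbors are forced to already lie in $\partial A$ (with ``helpers" supplied by $C$). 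The plan is then to read off, near $p$, a configuration congruent to one of the four patterns of Figure~\ref{fig_forbidden}, and to conclude via Lemma~\ref{lemma_forbidden} that adding $p$ to $A$ does not strictly increase $|\partial A|$, contradicting the saturation of $A$.

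The main obstacle is this last case analysis. The subtlety is that an $\ell_\infty$-neighbor of $p$ need not be an $\ell_1$-neighbor, so the presence of $C$-vertices near $p$ in the $\ell_\infty$ sense does not directly control the $\ell_1$-neighborhood of $p$ that determines $\partial A$ there. I expect the argument to split into subcases according to the position of $p$ relative to the extremal lines of $B(a,b,c,d)$ (interior, on an extremal line away from corners, or near a corner-type position), using the $\ell_1$-adjacent pair supplied by Lemma~\ref{lem:thick box} together with the extremal points of $C$ to bootstrap enough $\ell_1$-neighbors of $p$ into either $A$ or $\partial A$ so that some forbidden pattern is guaranteed to appear.
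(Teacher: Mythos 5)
Your reduction to the case where $C$ is not contained in a standard line, and the observation that a frontier vertex $p\in B(a,b,c,d)\setminus C$ must lie outside $A$, are both fine. The gap is exactly where you place it: the claim that ``enough of the vertices near $p$ are forced to lie in $C$ \dots\ so that some forbidden pattern is guaranteed to appear'' does not follow from your setup, and the deferred case analysis cannot be completed as described. The only information you have about $p$ is global --- it lies in the enclosing box of $C$, whose extremal values $a,b,c,d$ are attained by vertices of $C$ that may be far from $p$ --- plus the fact that $p$ is $\ell_\infty$-adjacent to \emph{some} vertex of $C$. That is too weak to control the $\ell_1$-neighbourhood of $p$. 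Concretely, if the only vertex of $C$ that is $\ell_\infty$-adjacent to $p$ is a single diagonal neighbour $u$, then (by maximality of the component) none of the four $\ell_1$-neighbours of $p$ lies in $A$, two of them already lie in $\partial A$ via $u$, and one computes $\lvert\partial(A\cup\{p\})\rvert\geq\lvert\partial A\rvert+2$: such a $p$ witnesses nothing, even though it is a perfectly good frontier vertex of the enclosing box. Since a priori $C$ could have such ``thin'' diagonal places (ruling them out is essentially what the lemma asserts), an unspecified choice of $p$ does not produce a contradiction with saturation, and you give no selection rule or existence argument for a good $p$.

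The paper resolves this by working from the inside out rather than from the enclosing box inward. Using Lemma~\ref{lem:thick box} it finds an adjacent pair in $C$, takes a \emph{maximal} box $B\subseteq C$ not contained in a standard line, and supposes $B\subsetneq C$. The $\ell_\infty$-connectivity of $C$ then yields $w\in B$ and $v\in C\setminus B$ that are $\ell_\infty$-adjacent, with $w$ on an extremal line of $B$; every application of Lemma~\ref{lemma_forbidden} then uses black vertices taken from the solid block $B$, which is precisely the local data your approach lacks. A short propagation along the next diagonal layer (the overlapping ``Swiss crosses'' of Figure~\ref{fig:swisscross}) shows that all of $B(a,b,c,d+1)\setminus B$ lies in $A$, hence in $C$, contradicting maximality of $B$. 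Any repair of your outside-in argument would, in effect, have to reproduce this layer-by-layer propagation; a single local configuration at one frontier vertex of $\enc(C)$ is not enough.
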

\begin{proof}
	Let $C$ be an $\ell_\infty$-component of $A$.
If $C$ is contained in a standard line, then we are done. Otherwise, by Lemma~\ref{lem:thick box}, $C$ contains two adjacent vertices. In particular, $C$ contains a box that is not contained in a standard line. 
Thus, up to congruence, $C$ contains a box $B=B(a,b,c,d)$ that is not contained in a standard line and is maximal, i.e. is not contained in any other box contained in $C$.
We will show that $B=C$.

Assume for a contradiction that $B$ is a proper subset of $C$. 
As $C$ is
$\ell_\infty$-connected, there exists an $\ell_\infty$-path 
from $B$ to $C \setminus B$ which is contained in $C$. 
Thus, there are vertices $w = (x, y) \in B$ and $v = (x', y') \in C \setminus B$ 
which are $\ell_\infty$-adjacent.
By symmetry, we may assume that $x' + y' > d$. 

%We will show that $B'\coloneqq B(a,b,c,d+1)$ is contained in $A$. 
%We first show that 
We claim that we may assume that $x+y = d$.
%We first reduce to the case where $x+y=d$.
For suppose that $x+y \neq d$,  then as $w$ is $\ell_\infty$ adjacent to $v$, we have 
$x+y=d-1$ and $v=(x+1,y+1)$. At least one of $(x,y+1)$ or $(x+1,y)$ is contained in $B$, since $B$ is not contained in a standard line. By replacing $w$ with such a vertex, we may assume $x+y=d$. 

\begin{figure}[H]
	\centering
	\scalebox{.8}{\tikzfig{images/satconnbox}}
	
	\caption{}\label{fig:satconnbox}
\end{figure}

We now show that the box $B'\coloneqq B(a,b,c,d+1)$ is contained in $C$, 
contradicting the maximality of $B$.
We first show that $C$ contains a vertex of $B'\setminus B$.  If $v \in B'$, then we are done. 
Otherwise, either $x'+y'=d+2$, or $y'-x'<a$, or $y'-x'>b$. 
If $y'-x'<a$ and $x' + y' \ne d+ 2$ --- as is shown in Figure~\ref{fig:satconnbox} --- then $v=(x+1,y)$ and $y-x=a$. 
As $B$ is not contained in a standard line, $(x-1, y+1) \in A$. Thus by Lemma~\ref{lemma_forbidden}, $z\coloneqq (x, y+1) \in A$. Since $C$ is an $\ell_\infty$-component of $A$, we have that $z \in C\cap (B' \setminus B)$.
If $y' - x' > b$ and $x' + y' \ne d + 2$, then a similar argument demonstrates that $B' \setminus B$ contains a vertex of $C$.
Finally, if $x'+y'=d+2$, then $v=(x+1,y+1)$.
As $B$ is not contained in a standard line, either $(x+1, y-1) \in B$ or $(x-1, y+1) \in B$. 
By Lemma~\ref{lemma_forbidden},
$(x+1, y) \in A$ in the first case and $(x, y+1) \in A$ in the second. 
In either case, using the fact that $C$ is an $\ell_{\infty}$-component of $A$, we deduce there exists a vertex in $C\cap(B' \setminus B)$.

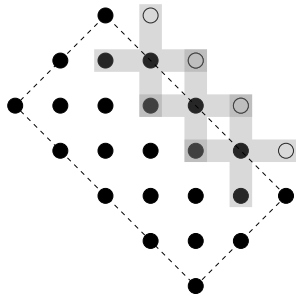
\begin{figure}[htp]
	\centering
	\scalebox{.6}{\begin{tikzpicture}
\foreach \k in {1,...,3} {
\begin{pgfonlayer}{nodelayer}
		\node [style=Empty, fill=white, draw=black]  (1) at (1-\k, 0+\k) {};
		\node [style=Empty, fill=white, draw=black] (2) at (0-\k, 1+\k) {};
	\end{pgfonlayer}
\draw[fill=gray,opacity=0.3,draw=none] (0.25-\k,0.25+\k) -- (1.25-\k,0.25+\k) -- (1.25-\k,-0.25+\k) -- (0.25-\k,-0.25+\k) -- (0.25-\k,-1.25+\k) -- (-0.25-\k,-1.25+\k) -- (-0.25-\k,-0.25+\k) -- (-1.25-\k,-0.25+\k) -- (-1.25-\k,0.25+\k) -- (-0.25-\k,0.25+\k) -- (-0.25-\k,1.25+\k) -- (0.25-\k,1.25+\k) -- (0.25-\k,0.25+\k);
};
\foreach \l in {0,1,2}{
	\foreach \k in {0,1,2,3,4} {
		\begin{pgfonlayer}{nodelayer}
			\node [style=Black] (0) at (0-\k-\l, 0+\k-\l) {};
		\end{pgfonlayer}
	};
};

\foreach \l in {1,2}{
	\foreach \k in {1,2,3,4} {
		\begin{pgfonlayer}{nodelayer}
			\node [style=Black] (0) at (1-\k-\l, 0+\k-\l) {};
		\end{pgfonlayer}
	};
};
\draw[style=dashed] (0,0) -- (-4,4)--(-6,2)--(-2,-2)-- (0,0);

\end{tikzpicture}}
	\caption{Black vertices are in $B$ and white vertices are in $B'\setminus B$, where $B'=B(0,8,0,5)$ and $B=B(0,8,0,4)$. The three overlapping Swiss crosses can be used to deduce, via Lemma~\ref{lemma_forbidden}, that if $C$ contains  a single vertex of $B'\setminus B$, then $B'\subset C$.}\label{fig:swisscross}
\end{figure}

By the previous paragraph, there exists a vertex $z = (x'', y'') \in C \cap (B' \setminus B)$. 
Note that $(x'', y'' -1) \in B$. If $(x'' , y''-2) \in B$, then $(x'' + 1, y''-1) \in C$ by Lemma~\ref{lemma_forbidden}. 
Similarly, if $(x''-1, y'') \in B$, and if $(x''-2, y'') \in B$, then $(x''-1, y''+1) \in C$.
By repeating these arguments, we see that $B' \subset A$ (see Figure~\ref{fig:swisscross}) as claimed,
and we get a contradiction.
\end{proof}
	The following proposition characterizes $\ell_\infty$-connected, saturated subsets of $X^0$ as boxes.
\begin{prop}\label{prop:sat+conn->box}
	A finite subset $A \subset X^0$ is a box if and only if it is $\ell_\infty$-connected and saturated.
\end{prop}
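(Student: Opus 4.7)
My plan is to prove each implication in turn, with most of the work for the nontrivial direction already done in Lemma~\ref{lem:comp_is_box}.

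For the ``if'' direction, suppose $A$ is a finite, saturated, and $\ell_\infty$-connected subset of $X^0$. Since $A$ is $\ell_\infty$-connected, it forms a single $\ell_\infty$-component of itself, and Lemma~\ref{lem:comp_is_box} applied to this component immediately yields that $A$ is a box. No further argument is required.

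For the ``only if'' direction, suppose $A$ is a box. Saturation is precisely the content of Lemma~\ref{lem:box_is_sat}, so the task reduces to showing that every box is $\ell_\infty$-connected. By Proposition~\ref{prop:charbox} it suffices to verify this for the standard forms $B(\alpha,\beta)$ and $\hat{B}(\alpha,\beta)$. My plan is to fix a base vertex $v_0$ of the box and show by induction on $\ell_\infty$-distance to $v_0$ that every vertex of the box is $\ell_\infty$-connected to $v_0$ within the box. For the inductive step, I work in the coordinates $u=y-x$ and $s=y+x$, in which a box is defined by $a\le u\le b$ and $c\le s\le d$; then the $\ell_\infty$-adjacent moves of $X$ translate to changes of $(u,s)$ by $(0,\pm 2)$, $(\pm 2,0)$, or $(\pm 1,\pm 1)$, and by selecting the appropriate such move one always produces a neighbor that lies strictly closer to $v_0$ and still satisfies both defining inequalities.

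The main obstacle will be verifying the construction in the degenerate and ``hatted'' cases, namely the one-dimensional boxes $B(0,\beta)$ contained in a single standard line (where only diagonal moves of one type are available and the parity constraint forces $\beta$ to be even), and the boxes $\hat{B}(\alpha,\beta)$ whose would-be corners are not lattice points, so no natural ``corner vertex'' can be used as $v_0$. In each of these cases a base vertex can nevertheless be chosen --- e.g.\ an extremal vertex minimizing $y+x$ --- and $\ell_\infty$-connectedness is then a short direct verification.
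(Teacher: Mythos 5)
Your proof is correct and follows essentially the same route as the paper: Lemma~\ref{lem:comp_is_box} gives the hard direction, and Lemma~\ref{lem:box_is_sat} gives saturation of a box. The only difference is that you spell out the $\ell_\infty$-connectedness of a box (which the paper treats as immediate), and your $(u,s)=(y-x,y+x)$ coordinate argument for it is sound and works uniformly, including in the degenerate and hatted cases you flag.
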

\begin{proof}
	If $A$ is a box, then it is finite, $\ell_\infty$-connected and saturated by Lemma~\ref{lem:box_is_sat}. 
	For the other direction, Lemma \ref{lem:comp_is_box} implies that an $\ell_{\infty}$-connected saturated set is a box.
\end{proof}

The next two lemmas, together with Proposition~\ref{prop:sat+conn->box}, show that a finite minimal saturated subset of $X^0$ is a box (Proposition~\ref{prop:finminsat -> box}). 
\begin{lemma} \label{lem:box_intersection}
Let $C_1$ and $C_2$ be distinct $\ell_\infty$-components of a finite, saturated subset $A\subset X^0$. By Lemma~\ref{lem:comp_is_box}, $C_1$ and $C_2$ are boxes. Let $(x,y)\in\partial C_1\cap \partial C_2$. Up to applying an automorphism of $X$,
we may assume that $(x-1,y)\in C_1$. Then $(x-1,y)$ is a corner of $C_1$ and  $(x+1,y)$ is a corner of $C_2$. Moreover, if $C$ is an $\ell_\infty$-component of $A$ with  $(x,y)\in \partial C$, then $C$ is equal to either $C_1$ or $C_2$.
\end{lemma}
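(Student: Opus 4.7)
The plan is to first use the definition of the $\ell_\infty$-metric together with saturation to pin down a detailed local picture of $A$ near $(x,y)$, and then translate this picture into the extremal-line conditions that make $(x-1,y)$ and $(x+1,y)$ corners of their respective boxes.

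First, since $(x,y) \in \partial C_1 \cap \partial C_2$, each of $C_1$ and $C_2$ contains some neighbor of $(x,y)$ in $X$. After applying an automorphism of $X$ fixing $(x,y)$, I may assume that the neighbor lying in $C_1$ is $(x-1,y)$. The neighbor of $(x,y)$ in $C_2$ cannot be $(x,y \pm 1)$: such a vertex would be $\ell_\infty$-adjacent to $(x-1,y) \in C_1$, contradicting $C_1 \neq C_2$. Hence $(x+1,y) \in C_2$. The same $\ell_\infty$-adjacency argument forces $(x,y-1)$ and $(x,y+1)$ to lie outside $A$, since otherwise they would merge $C_1$ and $C_2$ into a single $\ell_\infty$-component. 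Now saturation is brought in: adding $(x,y)$ to $A$ removes $(x,y)$ from $\partial A$, so the inequality $|\partial(A \cup \{(x,y)\})| > |\partial A|$ forces both $(x,y-1)$ and $(x,y+1)$ to be \emph{new} boundary vertices, and this in turn forces the six vertices
\[
(x-1,y\pm 1),\quad (x+1,y\pm 1),\quad (x,y\pm 2)
\]
all to lie outside $A$, and in particular outside $C_1 \cup C_2$.

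Writing $C_1 = B(a,b,c,d)$, the inclusion $(x-1,y) \in C_1$ gives $a \le y-(x-1) \le b$ and $c \le y+(x-1) \le d$. Each of the exclusions $(x,y) \notin C_1$, $(x,y \pm 1) \notin C_1$, $(x-1,y \pm 1) \notin C_1$ translates into a disjunction forcing one of these four inequalities to be an equality. A short case analysis on these disjunctions shows that $y-(x-1) \in \{a,b\}$ and $y+(x-1) \in \{c,d\}$, so $(x-1,y)$ lies on two distinct extremal lines of $C_1$ and is therefore a corner. I expect the main technical obstacle to be the degenerate situations where $C_1$ is contained in a standard line (i.e.\ $a=b$ or $c=d$), for which the disjunctions alone still permit off-by-one possibilities such as $y+(x-1) = d-1$. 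To rule these out I will use the parity observation that $u = y-x$ and $v = y+x$ share parity at every vertex of $X^0$; consequently, the $v$-values achieved along the line $u=a$ inside $C_1$ all share the parity of $a$ (and hence of $d$), so $v = d-1$ is not attainable. Applying the symmetric argument with the roles of $C_1$ and $C_2$ interchanged shows that $(x+1,y)$ is a corner of $C_2$.

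For the final claim, if $C$ is any $\ell_\infty$-component of $A$ with $(x,y) \in \partial C$, then $C$ contains a neighbor of $(x,y)$ in $X$. Since $(x,y \pm 1) \notin A$, this neighbor must be $(x-1,y)$ or $(x+1,y)$, and hence $C = C_1$ or $C = C_2$.
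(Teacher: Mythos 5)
Your proof is correct and follows essentially the same route as the paper: establish that $(x,y)$, $(x,y\pm 1)$ and the diagonal neighbours $(x\pm 1,y\pm 1)$ all lie outside $A$, then read off that $(x-1,y)$ and $(x+1,y)$ each lie on two distinct extremal lines of their boxes. The only cosmetic differences are that you obtain these exclusions by directly counting new boundary vertices (rather than invoking the forbidden configurations of Lemma~\ref{lemma_forbidden}), and that you spell out the degenerate-box and parity cases of the corner deduction, which the paper leaves implicit.
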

\begin{proof}
Note that $(x,y)\notin A$ and $(x,y\pm1)\notin A$, for otherwise $C_1$ and $C_2$ would be joined by an $\ell_\infty$-path in $A$. Thus it must be the case that $(x+1,y)\in C_2$. In particular, the only vertices of $A$ adjacent to $(x, y)$ are $(x - 1, y)$ and $(x+1, y)$. It follows that the only $\ell_\infty$-components of $A$ which contain  $(x,y)$ in their boundary are precisely $C_1$ and $C_2$.

Moreover,  $(x - 1,y\pm 1) \notin A$ since $A$ cannot contain configuration $(d)$ in Figure~\ref{fig_forbidden} by Lemma~\ref{lemma_forbidden}. We deduce that $(x-1,y)$ and  $(x+1,y)$ are corners of $C_1$ and $C_2$ respectively. 
\end{proof}

For the next lemma, we let $c(A)$ denote the number of $\ell_{\infty}$-components of a subset $A\subset X^0$.

\begin{lemma}\label{lem:saturated_connected_components}
 Let $A$ be a finite saturated subset of $X^0$.
 % with $c(A)\geq 1$. 
 Then there exists a set $A'\subset X^0$ such that  $\lvert A'\rvert =\lvert A\rvert$ and $\lvert \partial A'\rvert \leq \lvert \partial A\rvert - (c(A)-1)$. If in addition $A$ is minimal, then it is $\ell_\infty$-connected. 
\end{lemma}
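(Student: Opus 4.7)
The plan. The second assertion follows from the first: if $A$ is minimal with $c(A)\geq 2$, the first assertion yields $A'$ with $|A'|=|A|$ and $|\partial A'|\leq|\partial A|-(c(A)-1)<|\partial A|$, contradicting minimality. So I focus on the first assertion and argue by induction on $k=c(A)$. The base case $k=1$ is trivial (take $A'=A$), so assume $k\geq 2$. The key step is to produce $\tilde{A}\subset X^0$ with $|\tilde{A}|=|A|$, $c(\tilde{A})\leq k-1$, and $|\partial\tilde{A}|\leq|\partial A|-1$; a strengthening of the inductive hypothesis to arbitrary (not necessarily saturated) finite subsets of $X^0$, proved in the same way, may then be applied to $\tilde{A}$ to yield the desired $A'$.

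To construct $\tilde{A}$, I pick an $\ell_\infty$-component $D$ of $A$ containing a vertex $u=(x_0,y_0)$ maximizing $x+y$ over all of $A$, choose another $\ell_\infty$-component $C\neq D$, and pick $v=(x_1,y_1)\in C$ minimizing $x+y$ within $C$. The translate $\tilde{C}:=C+(x_0-x_1,\,y_0+1-y_1)$ sends $v$ to $(x_0,y_0+1)$, and every vertex of $\tilde{C}$ lies in the half-plane $\{x+y\geq x_0+y_0+1\}$, so $\tilde{C}$ is disjoint from $A$. Moreover $(x_0,y_0+1)\in\tilde{C}$ is $\ell_1$-adjacent to $u\in D$. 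Setting $\tilde{A}:=(A\setminus C)\cup\tilde{C}$ then gives $|\tilde{A}|=|A|$, and $D$ and $\tilde{C}$ lie in a common $\ell_\infty$-component of $\tilde{A}$, so $c(\tilde{A})\leq k-1$.

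The main obstacle is the estimate $|\partial\tilde{A}|\leq|\partial A|-1$. A direct inclusion-exclusion computation, using the fact that no three distinct $\ell_\infty$-components of a finite subset of $X^0$ can share a common $\ell_1$-boundary vertex (any three of the four neighbors of such a vertex would force two to be $\ell_\infty$-adjacent and hence in the same component), yields $|\partial A|-|\partial\tilde{A}|=r-\delta_C$, where $\delta_C=\sum_{C_j\neq C}|\partial C\cap\partial C_j|$ measures the boundary overlap of $C$ lost by moving it away, and $r\geq 2$ is the overlap gained by the new $\ell_1$-adjacency of $\tilde{C}$ with $D$. The case $\delta_C\leq 1$ is immediate. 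For $\delta_C\geq 2$ one exploits Lemma~\ref{lem:comp_is_box}, which uses saturation of $A$ to ensure that $C$ and $D$ are boxes, and refines the choice of translation so that $\tilde{C}$ is $\ell_1$-adjacent to $D$ along a shared extremal line rather than a single edge; this enlarges $r$ by enough to compensate for $\delta_C$, and the induction closes.
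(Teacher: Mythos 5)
Your reduction of the second assertion to the first is fine, and the inclusion--exclusion bookkeeping (no boundary vertex is shared by three $\ell_\infty$-components, so $\lvert\partial A\rvert-\lvert\partial\tilde A\rvert\geq r-\delta_C$ with $r\geq 2$) is correct. But the inductive step has a genuine gap, in two places. First, the set $\tilde A$ you recurse on is in general \emph{not} saturated, and the ``strengthening of the inductive hypothesis to arbitrary finite subsets, proved in the same way'' is not available: everything you use to control $\delta_C$ and to enlarge $r$ (components are boxes, shared boundary vertices sit next to corners, gluing along extremal lines) comes from Lemma~\ref{lem:comp_is_box} and Lemma~\ref{lem:box_intersection}, which require saturation. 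For a non-saturated set the one-step move can fail outright: take $A$ to be a single vertex $(0,0)$ together with an $\ell_\infty$-connected ring passing through $(\pm 2,0)$ and $(0,\pm 2)$. The only component you are allowed to move is the singleton, which has $\delta_C=4$, while a translated singleton placed above the ring satisfies $r\leq 3$, so $\lvert\partial\tilde A\rvert>\lvert\partial A\rvert$. Second, even in the saturated case the branch $\delta_C\geq 2$ is not actually handled: you never specify how $C$ is chosen, and the proposed fix (glue $\tilde C$ to $D$ along an extremal line) gains nothing when that extremal line of $C$ is a single vertex. A concrete saturated example is $A=\{(0,0),(2,0),\dots,(2n,0)\}$: choosing $C$ to be an interior singleton gives $\delta_C=2$ and $r=2$, so the boundary does not strictly decrease and the induction stalls.

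For comparison, the paper avoids both problems by organizing the induction differently. It chooses $C$ to be a component meeting the NE extremal line of the enclosing box of $A$; Lemma~\ref{lem:box_intersection} then forces $\lvert\partial C\cap\partial(A\setminus C)\rvert\leq 2$, so no case analysis on $\delta_C$ is needed. Crucially, it does \emph{not} keep $A\setminus C$ in place: it applies the inductive hypothesis to the saturated set $\bar C=A\setminus C$, replaces the output by a Wang--Wang set $W$ of the same size (using minimality of $W$ to inherit the boundary bound $\lvert\partial W\rvert\leq\lvert\partial\bar C\rvert-(n-2)$), and glues $C$ onto $W$ along a diagonal so that exactly $3$ boundary vertices are saved. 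This keeps every set fed into the induction saturated and sidesteps the need to bound $\delta_C$ for a worst-case component. If you want to salvage your ``move one component at a time'' scheme, you would at minimum need a rule for selecting $C$ (e.g.\ a component meeting an extremal line of $\enc(A)$, as in the paper) together with a proof that the resulting $\tilde A$ still lies in a class of sets for which your boundary estimate holds; as written, neither is supplied.
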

\begin{proof}
	We first recall that every $\ell_\infty$-component of $A$ is a box by Lemma \ref{lem:saturated_connected_components}.
We prove the lemma by induction on $c(A)=n$. The base case $n=1$ trivially follows by setting $A'=A$. When $n = 2$, then $A'$ is obtained from $A$ by translating one of the two $\ell_\infty$-components to reduce the boundary of the set by $1$ (this is possible as these components are boxes). Now suppose $c(A) >2$ and that for all finite saturated sets $S$, with $c(S)<c(A)$ there exists a set $S'\subset \cX$ such that  $\lvert S'\rvert =\lvert S\rvert$ and $\lvert \partial S'\rvert \leq \lvert \partial S \rvert - (c(S)-1)$. 

Given a box $B= B(a, b, c, d)$ we say that the line
 $y = -x + d$ is the {\it NE extremal line} of $B$ and that the line
 $y = -x + c$ is a {\it SW extremal line} of $B$. 
Consider the smallest box containing $A$ and let $L$ be its NE extremal line. Let $C$ be an $\ell_{\infty}$-component of $A$
{ which contains a vertex of $L$.}
 Since $A$ is saturated, so is $C$ and hence by Proposition~\ref{prop:sat+conn->box}, the set $C$ is a box.  
Let $\bar C = A \setminus C$. 
By Lemma~\ref{lem:box_intersection}, any vertex of $\partial C \cap \partial \bar{C}$ is adjacent to a corner of $C$ that does not lie on $L$. Thus, $\lvert \partial C \cap \partial \bar C \rvert \leq 2$, and we get that:
$\lvert\partial C\rvert +   \lvert \partial \bar C\rvert \leq \lvert \partial A\rvert +2$.
\begin{figure}[htp]
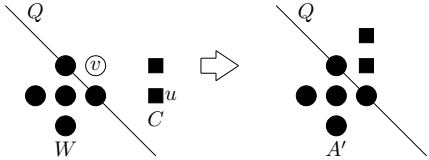

	\centering
	\scalebox{.8}{\tikzfig{images/reduce_bdry}}
	
	\caption{Arranging  $W$ and $C$ to get $A'$}\label{fig:reduce_bdry}
\end{figure}
 
We will now show that we can replace $\bar C$ by a different set so that the union of this set with $C$ satisfies the claim.
%a set $W$ such that $\lvert \bar C \rvert = \lvert W \rvert$ and 
%position it with respect to $C$
% such that $A' = W \cup C$ satisfies the lemma. 
If $\lvert \bar C \rvert = 1$, then $c(A) = 2$ and we reduce to a base case. 
If $\lvert \bar C \rvert = 2$, then set $W \coloneqq \{(0, 0), (-1, 1)\}$. Otherwise set $W = WW_{\lvert \bar C \rvert}$. Now since $A$ is saturated, $\bar C$ is also saturated and we have $c(\bar C) = c(A)-1 = n-1$. 
By the induction hypothesis, there exists a set $P$ such that $\lvert P \rvert = \lvert \bar{C} \rvert$ and $\lvert \partial P \rvert \le \lvert \partial \bar{C} \rvert - (n-2)$. 
As $W$ is minimal, $|\partial W| \le |\partial P |$. Thus,
 %the induction hypothesis, we get 
 $\lvert \partial W \rvert\leq \lvert \partial \bar C \rvert -(n-2)$.

We now translate $W$ and $C$ appropriately
%position $W$ and $C$
in order to define $A'$. Let $Q$ be the line given by $y+x=0$. Since $W$ is a Wang--Wang set of size greater than two or is congruent to $\{(0, 0), (-1, 1)\}$, we can apply an automorphism so that $W\subset \{(x,y)\mid y+x\leq 0\}$ and  $\lvert W \cap Q \rvert \geq 2$. Thus there exists some $v\in \partial W$ that is adjacent to two vertices of $W \cap Q$.
% and has the largest $y$-coordinate among all such vertices.
 Let $u \in C$
be a vertex on the SW extremal line of $C$. 
By translating $C$, we can suppose that $u$ and $v$ coincide. Setting $A' \coloneqq W \cup C$, we get (see Figure~\ref{fig:reduce_bdry})
\[\lvert \partial A' \rvert \le \lvert \partial C \rvert + \lvert \partial W \rvert - 3.\] Thus
\[ \lvert \partial A' \rvert \le \lvert \partial C \rvert + \lvert \partial W \rvert - 3 \le \lvert \partial C \rvert + \lvert \partial \bar C \rvert  - (n - 2) - 3  \le \lvert \partial A \rvert - (n - 1) \]
and
\[\lvert A'\rvert  = \vert W \rvert + \lvert C \rvert  = \lvert \bar C \vert + \lvert C \rvert = \lvert A \rvert. \] 
This concludes the proof of the lemma. 
\end{proof} 

\begin{proof}[Proof of Proposition \ref{prop:finminsat -> box}]
 Suppose $A\subset X^0$ is minimal and saturated. By Lemma~\ref{lem:saturated_connected_components}, $A$ is $\ell_\infty$-connected.  Lemma \ref{lem:comp_is_box} now implies that $A$ is a box.
\end{proof}

%%%%%%%%%%%%%%%%%%%%%%%%%%
%%%%%%%%%%%%%%%%%%%%%%%%%%
    \section{Minimal sets are connected}\label{sec:connected}
%%%%%%%%%%%%%%%%%%%%%%%%%%
%%%%%%%%%%%%%%%%%%%%%%%%%%
\emph{  Recall that a  subset $A\subset X^{0}$ is \emph{connected} if the subgraph of $X$ induced by $A$ is connected.
In this section we prove that (almost) all minimal sets are connected. Additionally, we use similar ideas to prove another characterization of minimal sets in terms of cones.}
\vspace{.3cm}

  \begin{thm} \label{thm:connected} 
\connected
  \end{thm}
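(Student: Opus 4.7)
The forward direction is immediate: $B(0,2)=\{(0,0),(1,1)\}$ is disconnected since its two vertices are at $\ell_1$-distance $2$, and is minimal because Theorem~\ref{thm:box_excess} gives $\Exc(B(0,2))=0$, so Lemma~\ref{lem:+ve exc minimal} applies.

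For the converse, I first observe that $B(0,2)$ is the unique disconnected minimal box. Indeed, any $B(\alpha,\beta)$ with $\alpha\geq 1$ contains the adjacent pair $(0,0),(0,1)$, and any $\hat B(\alpha,\beta)$ with $\alpha\geq 2$ similarly contains an adjacent pair; the remaining disconnected boxes are those of the form $B(0,\beta)$ with $\beta$ even and $\beta\geq 2$, and Theorem~\ref{thm:box_excess} gives $\Exc(B(0,\beta))<0$ for $\beta\geq 4$.

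Now let $A$ be a minimal set with $A\not\cong B(0,2)$, and apply Lemma~\ref{lem:nested_sequence_to_enc_box} to obtain a nested sequence $A=A_0\subset A_1\subset\cdots\subset A_N=\enc(A)$ of minimal sets with $|\partial A_i|=|\partial A|$. Since $\enc(\cdot)$ is monotone under inclusion, $\enc(A_i)=\enc(A)$ for every $i$. Any proper subset of the two-element box $B(0,2)$ has a strictly smaller enclosing box, so $\enc(A)\cong B(0,2)$ would force $A\cong B(0,2)$; hence $\enc(A)$ is a minimal box other than $B(0,2)$ and is therefore connected by the previous paragraph. I plan to show by downward induction on $i$ that every $A_i$ is connected; the base case $A_N=\enc(A)$ is immediate.

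For the inductive step, suppose $A_{i+1}$ is connected and $A_i=A_{i+1}\setminus\{v\}$ is disconnected. A direct computation from the definitions of $N(\cdot)$ and $\partial$ yields $|\partial A_i|-|\partial A_{i+1}|=1-|W|$, where $W$ is the set of vertices in $\partial A_{i+1}$ whose unique neighbor in $A_{i+1}$ is $v$; the hypothesis $|\partial A_i|=|\partial A_{i+1}|$ forces $|W|=1$. Moreover, since removing $v$ disconnects the connected set $A_{i+1}$, the vertex $v$ has at least two $A_{i+1}$-neighbors lying in distinct $\ell_1$-components of $A_i$. The main obstacle is the ensuing case analysis: combining $|W|=1$, the cut-vertex constraint on $v$, and the fact that $\enc(A)$ is saturated (Lemma~\ref{lem:box_is_sat}) and so contains no configuration congruent to one of those in Lemma~\ref{lemma_forbidden}, I split on whether the two components-separating $A_{i+1}$-neighbors of $v$ are opposite or adjacent in $\Z^2$. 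In each subcase the forced local structure around $v$, together with the holes $\enc(A)\setminus A_{i+1}$ that must accompany it, should either produce a configuration forbidden in $\enc(A)$ by Lemma~\ref{lemma_forbidden}, or force $|\partial A_i|$ to strictly exceed $|\partial WW_{|A_i|}|$, contradicting the minimality of $A_i$ via Lemma~\ref{lem:bdry monotone}.
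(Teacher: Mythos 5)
Your reduction is sound and parallels the paper's: the classification of disconnected minimal boxes (only $B(0,2)$, via the excess formula) is exactly Lemma~\ref{lem:disconnectedbox}, and passing to a nested sequence $A=A_0\subset\cdots\subset A_N=\enc(A)$ of minimal sets with constant boundary and inducting downward from the connected box $\enc(A)$ is exactly the shape of the paper's Lemma~\ref{lemma_removing_cone}. The computation $|\partial A_i|-|\partial A_{i+1}|=1-|W|$, hence $|W|=1$, is also correct. The problem is that the entire content of the theorem sits in the inductive step, which you have only sketched, and the sketch as described does not close. Two specific issues. First, Lemma~\ref{lemma_forbidden} constrains membership in the \emph{saturated} set $\enc(A)$, not in $A_{i+1}$: the vertices near the cut vertex $v$ that are absent from $A_{i+1}$ may perfectly well lie in $\enc(A)$, so no forbidden configuration is produced and the lemma gives you nothing about the local picture of $A_{i+1}$ around $v$. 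Second, "the two neighbors of $v$ lie in distinct components of $A_i$" is a global statement; a configuration such as $v=(0,0)$ with $(\pm1,0),(1,-1)\in A_{i+1}$, $(0,1)\in W$, $(0,-1)\notin A_{i+1}$ satisfies all of your local constraints ($|W|=1$, cut-vertex, no forbidden configuration), and nothing local forces a boundary count exceeding $|\partial WW_{|A_i|}|$. So the case analysis you defer cannot be completed with only the tools you name.

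What the paper uses to supply the missing global input is the cone structure: Lemma~\ref{lemma_removing_cone} proves, by the same downward induction, the stronger statement that $X^0\setminus A_i$ is a union of cones, so that the removed vertex $v$ is the apex of an entire cone disjoint from $A_i$; then Lemma~\ref{lem:cone char} (a substantial argument combining this with the excess bounds of Lemma~\ref{lemma_exc_bound}, which limit how many vertices of $\enc(A)$ a cone in the complement can swallow) forces the three vertices $(-1,-1),(0,-1),(1,-1)$ on the far side of $v$ to lie in $A_i$. That is precisely the local structure that lets any path through $v$ be rerouted, and it is obtained from global, not local, constraints. To repair your proof you would need either to prove an analogue of this cone/excess argument or to strengthen your inductive hypothesis to carry the requisite global information; as written, the proposal has a genuine gap.
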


%  Given a subset $A \subset X^{0}$, we say that a sequence of distinct vertices $(v_1, \ldots, v_n)$ in $A$ is {\it inert} if  
%  $|\partial A| = |\partial (A \setminus \{v_1, \ldots, v_i\}|$ for all $1 \le i \le n$. 
Before proving the above theorem, we need to first define cones and prove a series of lemmas.

\begin{definition} \label{def_cones}
	A {\it cone} is a subset of $X^0$ congruent to $C_0 := \{(x,y) \, | \, y-x \geq 0, y+x \geq 0\}$. An {\it extremal ray of the cone $C_0$} is the intersection of $C_0$ with either the line $y=x$ or $y=-x$. 
	An {\it extremal ray of a cone} is the image of an extremal ray of $C_0$ under the given congruence.
	A {\it cone at the vertex $v \in X^0$} is a cone whose two extremal rays intersect at $v$. 
	If $v \in X^0$, then the cone {\it above} $v$ is a cone at $v$ that is translation-equivalent to~$C_0$; the cone {\it to the right of $v$} is a cone at $v$ that is translation-equivalent to $C_0$ rotated clockwise $90^\circ$. The cones {\it below $v$} and {\it to the left of $v$} are defined analogously. 
\end{definition}

\begin{lemma}\label{lem:cone char}
Let $A\subset X^0$ be a finite set such that  $\lvert \enc(A)\setminus A\rvert < \Exc(\enc(A))$ and $X^0\setminus A$ is a union of cones. Suppose $v\in A$ and there is a cone $C$ based at $v$ such that $C\cap A=\{v\}$. Without loss of generality, we can suppose $v=(0,0)$ and that $C$ is the  cone  above $v$. Then $(-1,-1),(0,-1),(1,-1)\in A$.
\end{lemma}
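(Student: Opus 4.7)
I argue by contradiction. Suppose some $u \in \{(-1,-1),\,(0,-1),\,(1,-1)\}$ fails to lie in $A$. The reflection $x \mapsto -x$ fixes both $v = (0,0)$ and the cone $C$, so I may reduce to two subcases: (a) $u = (0,-1) \notin A$, or (b) $u = (-1,-1) \notin A$ while $(0,-1),(1,-1) \in A$.

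Since $X^0 \setminus A$ is a union of cones, $u$ lies in some cone $C' \subseteq X^0 \setminus A$ which must avoid $(0,0)$, and in case~(b) also $(0,-1)$ and $(1,-1)$. My first step is a short geometric classification of the possible~$C'$: checking the four orientations (above, below, left, right) together with the avoidance constraints, the apex of $C'$ is forced to lie on the line $y+x = -1$ in case~(a), and on $y+x=-2$ in case~(b), with $C'$ always pointing ``downward'' or ``sideways'' away from $v$. In particular, in every such subcase $C$ and $C'$ are disjoint, because $C \subseteq \{y+x \geq 0\}$ while $C' \subseteq \{y+x \leq -1\}$.

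Next I lower-bound $N = |\enc(A) \setminus A|$. Writing $\enc(A) = B(a,b,c,d)$, the inclusion $(0,0) \in A$ forces $a \leq 0 \leq b$ and $c \leq 0 \leq d$, so the sub-box $\enc(A) \cap C$ coincides with $B(b,d)$ in the sense of Notation~\ref{notation:boxes}, and by the hypothesis $C \cap A = \{v\}$ each of its vertices other than $v$ lies in $\enc(A) \setminus A$. Similarly $\enc(A) \cap C' \subseteq \enc(A) \setminus A$. Using disjointness of $C$ and $C'$, this yields
\[
N \;\geq\; |B(b,d)| - 1 \;+\; |\enc(A) \cap C'|,
\]
with $|B(b,d)|$ computed explicitly via Lemma~\ref{lem:area}.

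Finally, I will combine this bound with the explicit excess formula for $E = \Exc(\enc(A))$ supplied by Theorem~\ref{thm:box_excess} to show $N \geq E$ in every subcase, contradicting the strict hypothesis $N < E$. The main obstacle is the case analysis on the apex and orientation of $C'$, especially in case~(b) where the apex can lie arbitrarily far along the line $y+x = -2$; in that regime the direct estimate $|\enc(A) \cap C'|$ is weak, and one must further exploit the constraint that all of $C'$ (not merely its portion inside $\enc(A)$) is disjoint from $A$ to control the extremal behavior of $\enc(A)$ itself. Some parity casework is also unavoidable because the excess formula of Theorem~\ref{thm:box_excess} is floor-valued.
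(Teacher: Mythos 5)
Your counting framework is essentially the right one --- the paper also derives its contradiction from the fact that the two disjoint cones $C$ and $C'$ together occupy too many vertices of $B\coloneqq\enc(A)$ --- but the proposal has a genuine gap: the decisive quantitative step is never performed. Your last paragraph is written in the future tense (``I will combine this bound with the explicit excess formula\dots'') and then concedes an unresolved ``main obstacle.'' This cannot be waved through, because the target inequality $N\ge E$ is \emph{exactly tight} in several configurations. For instance, if $v$ lies on an extremal line of $B$ (say $d=0$) and $B$ is a square box of modulus $\{\alpha,\alpha\}$, one finds $|C\cap B|-1+|C'\cap B|=E=\lfloor\alpha/2\rfloor$ in the worst placement of $v$; every $\pm1$ and every floor matters, and the required bookkeeping over the position of $v$ inside $B$, the orientation of $C'$, and the parities in Theorem~\ref{thm:box_excess} is precisely the content of the proof. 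The paper avoids this entirely: it shows that $C\cup C'$ covers a full standard line $L$ intersected with $B$ (plus one extra vertex when $L$ is not extremal) and then invokes Lemma~\ref{lemma_exc_bound}, which gives $\Exc(B)\le|L\cap B|-1$ for extremal lines and $\Exc(B)\le|L'\cap B|$ for standard lines, so the explicit excess formula is never needed.

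Two further points in your setup need repair. First, the justification of disjointness is wrong in the sideways subcases: the cone to the right of $(0,-1)$ is $\{y-x\le-1,\ y+x\ge-1\}$, which contains points such as $(10,-1)$ with $y+x=9$, so it is not contained in $\{y+x\le-1\}$. (The cones $C$ and $C'$ are still disjoint, but for a different reason, e.g.\ $y\ge|x|$ and $x\ge|y+1|$ are jointly unsatisfiable.) Second, your classification of the apex of $C'$ is both incomplete and a source of your own confusion: a downward cone containing $(0,-1)$ but avoiding $(0,0)$ has its apex on the broken line $y=|x|-1$, not on $y+x=-1$; and your worry that in case~(b) ``the apex can lie arbitrarily far along $y+x=-2$'' evaporates once you observe that you may always replace $C'$ by the sub-cone with the same orientation based at the missing vertex itself, which is what the paper does. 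With that normalization the apex is a fixed point and only the three or four admissible orientations remain. Even after these repairs, however, the proof is incomplete until the tight comparison with $E$ is actually carried out in each orientation; I would recommend replacing the appeal to Theorem~\ref{thm:box_excess} by the line-counting bounds of Lemma~\ref{lemma_exc_bound}.
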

%\mymarpar{AM: This proposition generalises the ideas that were used in the old proof of Theorem \ref{thm:connected}. It's more powerful and it means there's no need to check cases by hand (i.e. we don't require the assumption the box contains $B(3,3)$).}
\begin{proof}
	Suppose $B=B(a,b,c,d)\coloneqq \enc(A)$. Let $A'\coloneqq A\setminus \{v\}$. Observe that $X^0\setminus A'$ is a union of cones, $C$ is disjoint from $A'$ and that $1\leq\lvert B\setminus A'\rvert \leq \Exc(B)$.
	%Since $\Exc(B)\geq 1$, 	  Lemma \ref{lemma_exc_bound} implies  $B$ intersects each of its extremal lines in at least two points. In particular, this implies $b-a\geq 2$ and $d-c\geq 2$.
	 We will show that $w_{-1}=(-1,-1)$, $w_0=(0,-1)$ and $w_1=(1,-1)$ are in $A$. By symmetry, we need only show $w_0$ and $w_1$ are in $A$.

	\underline {$w_0\in A$.} If $w_0\notin A$, then $w_0$ is contained in a cone $C_0$ disjoint from $A$, which we may assume is at $w_0$. Since $v\notin C_0$, $C_0$ must either be below, to the left or to the right of $w_0$. In either of the latter two cases, $C_0\cup C$ contains either the intersection of the extremal line  $y-x=b$ with $B$ or the intersection of the extremal line $y+x=d$ with $B$. 
	As, $\lvert (C_0\cup C)\cap B\rvert \leq \lvert B\setminus A'\rvert\leq \Exc(B)$,
	 this contradicts Lemma \ref{lemma_exc_bound}(2). Thus we may assume that $C_0$ is the cone below~$w_0$.
	
	Note that $v$ does not lie on the extremal line $y-x = a$ nor the extremal line $y+x = c$, for otherwise $C$ contains the intersection of an extremal line with $B$ and we get a contradiction as in the previous paragraph. 
	%Thus, we may assume that either $v$ lies on either the extremal line $y-x=d$ or the extremal line $y+x=b$ (or possibly both).
	Let $L$ be the line of slope $-1$ passing through  $v$. 
	%By applying the reflection $(x,y)\mapsto (-x,y)$ if necessary, we may assume 
	Let $\phi:X^0\rightarrow X^0$ be the translation $(x,y)\mapsto (x-1,y)$.
	As $v$ does not lie on $y-x = a$ or $y+x = c$, given a vertex $u\in L\cap B$ it follows that either  $u\in C\cap B\cap L$ or $\phi(u)\in C_0\cap B$. 
	 It follows that \begin{align}\label{eqn:cone bound}
	\lvert B\cap L \rvert \leq \lvert C\cap B \cap L\rvert+\lvert C_0\cap B\cap L\rvert\leq \lvert C\cap B\rvert+\lvert C_0\cap B\rvert\leq \lvert B\setminus A'\rvert \leq \Exc(B).
	\end{align} 
	As $v$ does not lie on $y-x = a$ or $y+x = c$, either $L$ is an extremal line of $B$ or $(0,1)\in B$. 
	If $L$ is an extremal line, then this contradicts Lemma \ref{lemma_exc_bound}(2). If not, then $(0,1)\in C\cap (B\setminus L)$, so $\lvert C\cap L \cap B\rvert<\lvert C\cap  B\rvert$. Thus the inequality in  (\ref{eqn:cone bound}) is strict, contradicting Lemma \ref{lemma_exc_bound}(3). We deduce that $w_0\in A$.
	
	\underline{$w_{1}\in A$.} Suppose $w_1\notin A$. Then there is a cone $C_1$  based at $w_1$ that does not intersect $A$. Since $v\notin C_1$, $C_1$ must be either below or  to the right of  $w_1$. In either case, if $L$ is the line through $v$ and $w_1$, then $B\cap L\subseteq C\sqcup C_1$ and so \begin{equation}\label{eqn:w1 cone calc}
	\lvert B\cap L\rvert \leq \lvert C_1\cap B \cap L\rvert+\lvert C\cap B \cap L\rvert\leq \lvert C_1\cap B\rvert+\lvert C\cap B\rvert\leq \lvert B\setminus A'\rvert \leq \Exc(B).
	\end{equation} 
	If $L$ an extremal line of $B$, then (\ref{eqn:w1 cone calc}) contradicts Lemma \ref{lemma_exc_bound}(2). We thus assume $L$ is not an  extremal line of $B$.  
	
	We claim that the inequality in (\ref{eqn:w1 cone calc}) is strict. If we show this, then 
	 we get a contradiction by Lemma \ref{lemma_exc_bound}(3),  and we can deduce that $w_1\in A$ as required. 
	We first observe that as $L$ is not an extremal line of $B$ and as $w_0 = (0,-1) \in B$ (by what we have already shown), we must have that $(1,0)\in B$. 
	There are now two cases depending on whether or not  $w_1\in B$. If $w_1\notin B$, then the line $y-x=-1=a$ through  $(0,-1)$ and $(1,0)$ is an extremal line of $B$.
	Furthermore, as $\Exc(B) > 0$, it follows from Lemma \ref{lemma_exc_bound}(2) that $b-a \ge 2$. 
	Thus, we conclude that $(0,1)\in (B\cap C)\setminus L$. 
	We thus deduce that  $\lvert C\cap L \cap B\rvert< \lvert C\cap B\rvert$ and that the inequality in (\ref{eqn:w1 cone calc}) is strict, proving the claim when $w_1\notin B$.

	If $w_1\in B\setminus A$, then $\Exc(B)\geq \lvert B\setminus  A\rvert+1\geq 2$.
	Lemma \ref{lemma_exc_bound}(2) then implies that every extremal line of $B$ contains at least three vertices, and so $b-a,d-c\geq 4$. As $v,w_1,(0,-1),(1,0)\in B$, it follows that $B\setminus L$ must intersect at least one of $C$ or $C_1$, and so either $\lvert C\cap L \cap B\rvert< \lvert C\cap B\rvert$ or $\lvert C_1\cap L \cap B\rvert< \lvert C_1\cap B\rvert$. In either case, we deduce as before that the inequality in (\ref{eqn:w1 cone calc}) is strict as required.
\end{proof}

\begin{lemma}\label{lem:disconnectedbox}
	Up to congruence,  $B(0,2)$ is the only disconnected box that is a minimal set. In particular, if $B$ is a box with $\Exc (B)>0$, then $B$ is connected.
\end{lemma}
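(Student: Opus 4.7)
The plan is to identify exactly which boxes are disconnected, and then apply the excess formula (Theorem~\ref{thm:box_excess}) to decide which of these are minimal. The main observation is that a box $B$ with modulus $\{\alpha,\beta\}$ is disconnected precisely when $\min(\alpha,\beta)=0$ and $\max(\alpha,\beta)\geq 2$.

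First I would treat the case $\min(\alpha,\beta)\geq 1$. Here the box is connected: for any vertex $(x,y)\in B$ with $y+x>c$, at least one of $(x-1,y)$ or $(x,y-1)$ lies in $B$, since the hypothesis $b-a\geq 1$ prevents both the lines $y-x=a$ and $y-x=b$ from passing through $(x,y)$ simultaneously. Induction on $y+x$ then connects every vertex of $B$ to the SW extremal line, and a symmetric argument connects the SW extremal line (interior to itself through rows above). Next I would handle the case $\min(\alpha,\beta)=0$. By Proposition~\ref{prop:charbox}, such a box is congruent to $B(0,\beta)$, and Convention~\ref{conv:box} forces $\beta$ to be even. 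The vertices of $B(0,\beta)$ are the points on the line $y=x$ with $y+x\in\{0,2,\dots,\beta\}$, so consecutive vertices are at $\ell_1$-distance $2$. Hence this box is disconnected if and only if it contains at least two vertices, i.e. $\beta\geq 2$.

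Combining these two steps, the disconnected boxes are, up to congruence, exactly the boxes $B(0,2n)$ with $n\geq 1$. Now I would apply Theorem~\ref{thm:box_excess} to $B(0,2n)$: here $\alpha=0$, $\beta=2n$, so $r=n$ and $k=n$, giving
\[
\Exc\bigl(B(0,2n)\bigr)=\left\lfloor\frac{n-n^2}{2}\right\rfloor,
\]
which equals $0$ when $n=1$ and is strictly negative when $n\geq 2$. By Lemma~\ref{lem:+ve exc minimal}, $B(0,2n)$ is minimal if and only if $n=1$, proving the first assertion of the lemma.

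For the \emph{in particular} clause, suppose $B$ is a box with $\Exc(B)>0$. Then $B$ is minimal by Lemma~\ref{lem:+ve exc minimal}. If $B$ were disconnected, the previous paragraph would force $B$ to be congruent to $B(0,2)$; but $\Exc(B(0,2))=0$, contradicting $\Exc(B)>0$. Hence $B$ must be connected. The main (mild) obstacle in all of this is the first step, namely verifying that every vertex of a box with $\min(\alpha,\beta)\geq 1$ really does have a neighbor in $B$ moving toward a chosen corner; the remainder of the argument is then a direct computation with the excess formula.
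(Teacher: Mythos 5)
Your proof is correct and follows essentially the same route as the paper: reduce to the observation that a disconnected box must be (congruent to) $B(0,2n)$ for some $n\geq 1$, then apply Theorem~\ref{thm:box_excess} and Lemma~\ref{lem:+ve exc minimal} to see that only $n=1$ gives a minimal set. The only difference is that you spell out the connectivity of boxes with modulus entries both at least $1$, a step the paper asserts without detail.
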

\begin{proof}
	Suppose $B$ is  a disconnected box that is a minimal set. Then it  contains more than one vertex and is contained in a standard line. 
	Thus $B$ is congruent to $B(0,2n)$ for some $n>0$. Note that $\Exc(B)=\frac{n(1-n)}{2}$ by Theorem \ref{thm:box_excess}.   Lemma \ref{lem:+ve exc minimal} implies  $n=1$ and so $B = B(0,2)$. Furthermore, as $\Exc(B)=0$, the second claim follows. 
\end{proof}
The next lemma describes the geometry of a minimal set.
%process of removing a sequence of inert vertices from a box. 

\begin{lemma} \label{lemma_removing_cone}
	Suppose $A\subset X^0$ is minimal. Then $X^0\setminus A$ is a union of cones. Moreover,  $A$ is connected if and only if $\enc(A)$ is connected.
\end{lemma}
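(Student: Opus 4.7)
The plan is to prove the first assertion ($X^0\setminus A$ is a union of cones) and then use it to establish the connectedness equivalence.

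For the first assertion, I proceed by downward induction on the maximal nested sequence $A = A_0 \subset A_1 \subset \cdots \subset A_N = \enc(A)$ given by Lemma \ref{lem:nested_sequence_to_enc_box}, along which $|\partial A_i|$ is constant. The base case $i=N$ is immediate: the complement of the box $\enc(A)$ is visibly a union of cones, since for any vertex outside a box exactly one of the four defining inequalities fails, and the cone at that vertex pointing in that direction is disjoint from the box. For the inductive step, write $A_i = A_{i+1}\setminus\{v\}$; any $u \in X^0\setminus A_{i+1}$ inherits a cone at $u$ disjoint from $A_{i+1}$, and since $v \in A_{i+1}$ this cone does not contain $v$ and remains disjoint from $A_i$. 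The crucial task is to produce a cone at $v$ itself disjoint from $A_{i+1}\setminus\{v\}$. Here I use the boundary-preservation $|\partial A_{i+1}| = |\partial A_i|$: a direct count of $v$'s neighbors in $A_{i+1}$ versus its neighbors in $\partial A_{i+1}$ that are singly-hit by $A_{i+1}$ yields a tight numerical balance that sharply restricts $v$'s local configuration. Combined with the inductive description of $X^0\setminus A_{i+1}$ as a union of cones (giving cones at each non-$A_{i+1}$ neighbor of $v$), I expect to identify one of the four cones at $v$ meeting $A_{i+1}$ only in $\{v\}$. This local step is the main obstacle and will likely require a careful case analysis, perhaps invoking Lemma \ref{lem:cone char} to rule out the ``surrounded'' configurations.

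For the second assertion, the direction $A$ connected $\Rightarrow \enc(A)$ connected follows by contrapositive. If $\enc(A)$ is disconnected, then by Lemma \ref{lem:nested_sequence_to_enc_box} the enclosing box is itself a minimal set, so Lemma \ref{lem:disconnectedbox} gives $\enc(A) \cong B(0,2)$. Since $\Exc(B(0,2))=0$, Theorem \ref{thm:minimal set characterization} forces $A = \enc(A) \cong B(0,2)$, which is disconnected.

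For the reverse direction, assuming $\enc(A)$ is connected, I will use the first assertion to structure $\enc(A)\setminus A$: each missing vertex $w$ has a cone at $w$ disjoint from $A$, whose intersection with $\enc(A)$ is therefore entirely contained in $\enc(A)\setminus A$. Consequently $\enc(A)\setminus A$ decomposes as a union of up to four ``corner regions'' of the box, each bounded by two extremal lines of $\enc(A)$ and a diagonal frontier, so $A$ is obtained by chiseling such corners off the connected box $\enc(A)$. I then argue that this chiseled set remains connected: the bound $|\enc(A)\setminus A| \le \Exc(\enc(A))$ combined with the explicit excess formula of Theorem \ref{thm:box_excess} controls the depth of each corner cut in terms of the modulus of $\enc(A)$, which prevents opposing corner regions from meeting across the interior of $\enc(A)$ and cutting $A$ into pieces.
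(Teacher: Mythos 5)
Your high-level architecture matches the paper's: induct down the nested sequence of Lemma~\ref{lem:nested_sequence_to_enc_box}, with the box as base case, and handle the disconnected direction of the second claim via Lemma~\ref{lem:disconnectedbox} and Theorem~\ref{thm:minimal set characterization} (that part of your argument is complete and correct). However, the core of the first assertion --- producing a cone at the removed vertex $v$ that misses $A_{i+1}\setminus\{v\}$ --- is exactly where you stop and say you ``expect'' a case analysis to work. That is the theorem. The paper's resolution is: the count $|\partial A_{i+1}|=|\partial A_i|$ yields a neighbor $w$ of $v$ lying in $\partial A_{i+1}$ but adjacent to no other vertex of $A_{i+1}$; then the cone decomposition of $X^0\setminus A_{i+1}$ applied to the two neighbors of $w$ off the axis forces the two extremal rays of the cone above $v$ to miss $A_{i+1}$; and finally the \emph{connectivity} of $A_{i+1}$ is used to conclude that the entire interior of that cone misses $A_{i+1}$ (a path from any interior vertex of the cone to $v$ would have to cross the rays or pass through $w$). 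This last use of connectivity breaks your plan of proving the cone assertion first and the connectivity assertion afterwards: the two claims must be carried simultaneously in the induction (the paper gets connectivity of $A_{i+1}$ from the inductive hypothesis together with connectivity of the enclosing box, which follows from $\Exc(\enc(A))>0$ and Lemma~\ref{lem:disconnectedbox}). Without connectivity of $A_{i+1}$, a ``blob'' of $A_{i+1}$ sitting deep inside the candidate cone is not excluded by any purely local count at $v$.

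The reverse direction of the second assertion is also not established. Your ``chiseled corners'' picture is broadly right --- each cone disjoint from $A$ meets $\enc(A)$ in a staircase region anchored at one of the four corners, and adjacent staircases cannot jointly cover an extremal line since $A$ meets every extremal line of its enclosing box --- but the assertion that the excess bound ``prevents opposing corner regions from meeting across the interior'' is precisely the quantitative step that needs proof, and you would additionally have to verify that a single deep staircase cannot disconnect the remainder under the $\ell_1$ adjacency (which, in the rotated coordinates where the staircases are monotone regions, is diagonal adjacency, so this is not automatic). The paper avoids this global geometry entirely: within the same induction it applies Lemma~\ref{lem:cone char} to $A_{i+1}$, $v$ and the cone $C$ to conclude that $(-1,-1),(0,-1),(1,-1)$ all lie in $A_i$, and then reroutes any simple path through $v$ around these three vertices. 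I would recommend either adopting that local rerouting argument or fully developing the staircase analysis; as written, both the cone-at-$v$ step and the connectivity-preservation step are genuine gaps rather than routine verifications.
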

\begin{proof}
	We proceed by induction on $n\coloneqq \lvert \enc(A)\setminus A\rvert$. The base case $n=0$ is clear, since  then $A=\enc(A)$ is a box and the complement of a box is a disjoint union of cones. 
		For our inductive hypothesis, we assume the conclusion  holds for all minimal sets $A$ with $n=\lvert \enc(A)\setminus A\rvert$. 
		
	Suppose $A'\subset X^0$ is a minimal set with $\lvert \enc(A')\setminus A'\rvert=n+1\geq 1$. Let $B\coloneqq \enc(A')$. By Theorem \ref{thm:minimal set characterization}, as $A'$ is minimal and $\lvert B\setminus A'\rvert \geq 1$, we have $\Exc(B)>0$. Thus Lemma \ref{lem:disconnectedbox} implies $B$ is connected.
	 By Lemma \ref{lem:nested_sequence_to_enc_box}, there exists a minimal set $A$ such that $\lvert A\rvert = \lvert A'\rvert+1$ and $A'\subsetneq A\subseteq B$. 
	 In particular, $\enc(A) = B$ and, by Theorem \ref{thm:minimal set characterization}, $|\partial A| = |\partial A'| = |\partial B|$.
	  %Moreover, Lemmas \ref{lem:bdry monotone} and \ref{lem:nested_sequence_to_enc_box} imply that $\lvert \partial A\rvert=\lvert \partial A'\rvert=\lvert \partial B\rvert$, and so $\enc(A)=B$.  
	 As $\lvert B\setminus A\rvert =n$ and $B$ is connected,  the inductive hypothesis tells us that  $A$ is connected and $X^0\setminus A$ is a union of cones. Let $v\in A\setminus A'$. 	 We will show that $v$ is contained in a cone $C$ disjoint from $A'$.

	 Since  $\lvert \partial A\rvert=\lvert \partial A'\rvert$, $A' \subset A$, $|A| = |A'| + 1$ and $A$ is connected,
	 there exists some vertex $w \in \partial A$ adjacent to $v$ which is not in $N(A')$.
	 In particular, $w$ is adjacent to $v$ and no other  vertex in $A$.
	 Without loss of generality, we can suppose $v=(0,0)$ and $w=(0,1)$.  Suppose $R^+$ and $R^-$ are the rays $\{(x,y)\mid y-x= 0,y\geq 1\}$	 and $\{(x,y)\mid y+x= 0,y\geq 1\}$ respectively. Since $b\coloneqq (1,1)$  is adjacent to $w$, it is  not contained in $A$. As $X^0\setminus A$ is a union of cones, $b$ is contained in a cone $C'$ that is disjoint from $A$. Since $C'$ cannot contain $v$, it must contain the ray $R^+$, so that $R^+\cap A=\emptyset$. A similar argument using $(-1,1)$ allows one to deduce $R^-\cap A=\emptyset$.	 
	 Let $C$ be the cone above $v$. 
	 As $A$ is connected and  $\{w\} \cup R^-\cup R^+$ do not contain vertices of $A$, it follows that $C \cap A' = \emptyset$.
	Since $X^0\setminus A$ is a union of cones and $v\in C$, it follows that $X^0\setminus A'$ is also a union of cones. This proves the first claim of the lemma.
	 
	  Since $A'$ is minimal, we have $\lvert B\setminus A'\rvert \leq \Exc(B)$, and so $\lvert B\setminus A\rvert \leq \Exc(B)-1$. Thus Lemma \ref{lem:cone char} can be applied to $A$, $v$ and $C$ to deduce that $(-1,-1),(0,-1),(1,-1)\in A'$. We now show $A'$ is connected. We pick arbitrary $p,q\in A'$ and show that $p$ and $q$ can be joined by a path in $A'$. Since $A$ is connected, there exists a simple path $P=(p=v_0, v_1, \dots, q= v_m)$ in $A$. If $P$ is a path in $A'$ we are done. If not, then $v_i=(0,0)$ for some $0<i<m$. Since $P$ is simple, $v_j\in A'$ for all $j\neq i$.  Thus $v_{i-1}$ and $v_{i+1}$ are contained in $A'$ and are adjacent to $v$, and so $v_{i-1},v_{i+1}\in\{(-1,0),(1,0),(0,-1)\}$. Since $(-1,-1),(0,-1),(1,-1)\in A'$, $v_{i-1}$ and  $v_{i+1}$ can be joined by a path $(v_{i-1}=u_0,u_1, \dots, u_t=v_{i+1})$ in $A'$. Thus  $(p=v_0,\dots, v_{i-1}=u_0,\dots, u_t=v_{i+1},\dots v_m=q)$ is a path from $p$ to $q$ in $A'$.
\end{proof}

\begin{proof}[Proof of Theorem \ref{thm:connected}]
	Suppose $A$ is a minimal set that is not connected. Lemma \ref{lemma_removing_cone} implies $\enc(A)$ is not connected. By Lemma \ref{lem:disconnectedbox},  $\enc(A)$ is congruent to $B(0,2)$. Since  $\Exc(B(0,2))=0$, Theorem \ref{thm:minimal set characterization} ensures  $A=\enc(A)$  and so $A$ is  congruent to $B(0,2)$.
\end{proof}

We now state a characterization of minimal sets using cones. 

\begin{thm}\label{thm:minimal set characterization_refined}
	Let $A\subset X^0$ with $N := \left| \enc(A)\setminus A\right|$ and $E := \Exc(\enc(A))$. Then the following are equivalent:
	\begin{enumerate}
		\item\label{item:coneinert0} $A$ is minimal;
		\item\label{item:coneinert1} $\left| \partial A\right| =\left| \partial (\enc(A))\right |$ and $N \le E$;
		\item\label{item:coneinert2} $X^0\setminus A$ is a union of cones and $N \le E$.
	\end{enumerate}
\end{thm}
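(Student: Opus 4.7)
My plan is to extend the equivalence (1) $\iff$ (2) provided by Theorem~\ref{thm:minimal set characterization} to include (3). The direction (1) $\implies$ (3) is immediate: Lemma~\ref{lemma_removing_cone} yields the cone decomposition of $X^0 \setminus A$, and $N \le E$ follows from (2) via the definition of excess, taking $B = A$ in the maximum (using $|\partial A| = |\partial \enc(A)|$).

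The main work is (3) $\implies$ (1), which I would prove by induction on $N$. The base case $N = 0$ is immediate: $A = \enc(A)$ is a box, and $E \ge 0$ makes it minimal by Remark~\ref{rem:min boxes}. For the inductive step with $N \ge 1$, I aim to find a vertex $v \in (\enc(A) \setminus A) \cap \partial A$ such that $A' := A \cup \{v\}$ still satisfies (3) with $N$ replaced by $N - 1$ and such that $|\partial A'| = |\partial A|$. By induction $A'$ is minimal; moreover the condition $E \ge N$ ensures that the Wang--Wang boundary is constant over sizes from $|A|$ to $|\enc(A)|$, so $|\partial A| = |\partial A'| = |\partial WW_{|A'|}| = |\partial WW_{|A|}|$ and $A$ is minimal.

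The central step is the choice of $v$. Call a vertex $v \in \enc(A) \setminus A$ \emph{cone-maximal} if no $u \ne v$ is the apex of a cone disjoint from $A$ containing $v$; such vertices exist since $\enc(A) \setminus A$ is finite and the relation is antisymmetric (if $v$ lies in the cone at $u$ in some direction, then $u$ cannot lie in a cone at $v$ in the opposite direction). Connectedness of $\enc(A)$, ensured by $\Exc(\enc(A)) \ge 1$ via Lemma~\ref{lem:disconnectedbox} (as $E \ge N \ge 1$), forces $v$ to be adjacent to $A$. In the spirit of Lemma~\ref{lem:cone char}, a local analysis shows that the unique neighbor of $v$ lying in the direction of its cone $C_v$ is the sole neighbor of $v$ in $X^0 \setminus (A \cup \partial A)$ (its own neighbors all lie in $C_v$ or at $v$), while the other three neighbors of $v$ are forced by the ``anti-cone'' structure at a cone-maximal vertex to lie in $A \cup \partial A$. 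Applying the formula $|\partial(A \cup \{v\})| = |\partial A| - [v \in \partial A] + |S \setminus \partial A|$ with $S$ the neighbors of $v$ outside $A$, we then obtain $|\partial A'| = |\partial A|$.

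The main obstacle is maintaining the cone-union structure when adding $v$. Cones in the original cover containing $v$ become invalidated, but by cone-maximality each such cone has apex $v$. The cover for $X^0 \setminus A'$ can then be repaired by replacing each invalidated cone $C_v$ with the family of smaller cones $\{C_w : w \in C_v \setminus \{v\}\}$, where $C_w$ is the cone at $w$ in the same direction as $C_v$. Each such $C_w$ is a subset of $C_v$ by the triangle inequality and does not contain $v$, hence is disjoint from $A' = A \cup \{v\}$. Establishing this repair, together with the ``anti-cone'' positioning of the three non-cone-direction neighbors of $v$, are the two main technical points the proof must resolve.
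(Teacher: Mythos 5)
Your top-level structure is sound: the equivalence of (1) and (2) is Theorem~\ref{thm:minimal set characterization}, and your derivation of (1)~$\Rightarrow$~(3) from Lemma~\ref{lemma_removing_cone} plus the definition of excess is exactly the paper's. For (3)~$\Rightarrow$~(1) you take a genuinely different route. The paper argues top-down: it chooses a minimal set $D$ with $A\subseteq D\subseteq \enc(A)$ of smallest size, picks $v\in D\setminus A$ extremal in a cone disjoint from $A$ so that some cone $C$ at $v$ meets $D$ only in $v$, and applies Lemma~\ref{lem:cone char} to $D$ to delete $v$ while preserving minimality, contradicting the choice of $D$. The point of that detour is that every hypothesis of Lemma~\ref{lem:cone char} comes for free: $D$ is minimal, so $X^0\setminus D$ is a union of cones by Lemma~\ref{lemma_removing_cone}, and $\lvert \enc(A)\setminus D\rvert<\lvert\enc(A)\setminus A\rvert\le E$ supplies the \emph{strict} inequality that Lemma~\ref{lem:cone char} requires. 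Your bottom-up induction forgoes all of these crutches at its very first step, where you only know $N\le E$, $v\notin A$, and nothing about connectivity of $A$.

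The genuine gap is that the two ``technical points'' you defer are the entire content of the argument, and the surrounding claims are not quite right as stated. First, the assertion that the three non-cone-direction neighbours of a cone-maximal $v$ lie in $N(A)$ is exactly a variant of Lemma~\ref{lem:cone char}, but that lemma is proved for a different configuration ($v\in A$ with $C\cap A=\{v\}$, and $N<E$ rather than $N\le E$); its proof is a delicate case analysis on which cone could swallow each neighbour, and none of it transfers verbatim --- in particular, with $v\notin A$ you must additionally exclude the case that the cone containing a neighbour of $v$ also contains $v$ itself. Second, membership in $A\cup\partial A$ is too weak for your boundary count: to get $\lvert\partial(A\cup\{v\})\rvert=\lvert\partial A\rvert$ you need $v\in\partial A$, hence at least one of those neighbours in $A$, and your justification of this via ``connectedness of $\enc(A)$'' is not a valid inference (a vertex deep inside $\enc(A)\setminus A$ need not be adjacent to $A$). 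Third, the existence of a cone-maximal vertex does not follow from antisymmetry of a relation on a finite set alone: you must first show that the apex of any cone disjoint from $A$ that meets $\enc(A)\setminus A$ itself lies in the finite set $\enc(A)\setminus A$ (a priori it could lie outside the box), and you must rule out $v$ lying in disjoint-from-$A$ cones of two different directions with distinct apexes --- which again requires an excess/standard-line argument in the style of Lemma~\ref{lemma_exc_bound}, not just order-theoretic generalities. Until these are carried out, the proof of (3)~$\Rightarrow$~(1) is incomplete.
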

\begin{proof}
The equivalence of (\ref{item:coneinert0}) and (\ref{item:coneinert1}) follows from Theorem \ref{thm:minimal set characterization}. Theorem \ref{thm:minimal set characterization} and Lemma \ref{lemma_removing_cone} tells us that (\ref{item:coneinert0}) implies (\ref{item:coneinert2}). All that remains is to show that (\ref{item:coneinert2}) implies  (\ref{item:coneinert0}).

Let $A\subset X^0$ with $\left| B\setminus A\right| \leq \Exc(B)$, where $B\coloneqq \enc(A)$. Suppose $X^0\setminus A$ is a union of cones. Since $\Exc(B)\geq 0$, $B$ is minimal by Lemma \ref{lem:+ve exc minimal}. Let $D$ be a minimal set such that $A\subset D\subseteq B$, with $|D|$ 
minimal among all such choices. Notice that $\enc(D)=B$. 
%by Lemma \ref{lem:bdry monotone}.

For contradiction, suppose $A$ is not minimal. Then $A\subsetneq D$, so pick $v\in D\setminus A$. As $X^0\setminus A$ is a union of cones, there exists a cone $C'$ containing $v$ and disjoint from $A$. Without loss of generality, we may suppose $C'$ faces upwards (i.e. it lies above some vertex) and that $v\in D\cap C'$ has maximal $y$-coordinate out of all vertices in $D\cap C'$. Let $C$ be the cone above $v$. Since $C\subset  C'$ and $v$ has maximal $y$ coordinate,  $C$ is a cone above $v$ such that $C\cap D=\{{v}\}$. 

Since $D$ is a minimal set,  Lemma \ref{lemma_removing_cone} ensures $X^0\setminus D$ is a union of cones. Moreover, since $A\subsetneq D\subseteq B$,  $\lvert B\setminus D\rvert<\lvert B\setminus A\rvert\leq \Exc (B)$. Without loss of generality, we can assume that $v=(0,0)$. Lemma \ref{lem:cone char} now implies that $(-1,-1)$, $(0,-1)$ and $(1,-1)$ are in $D\setminus \{v\}$.
As $(-1,-1), (0,-1), (1,-1) \in D$ and $C \cap D = \{v\}$, it follows that  $\lvert \partial(D\setminus \{v\})\rvert = \lvert \partial D\rvert$.
 Theorem~\ref{thm:minimal set characterization}  
 thus ensures $D\setminus \{v\}$ is minimal. Since $A\subseteq D\setminus \{v\}\subsetneq D\subseteq B$, this contradicts our choice of $D$. Thus $A$ is minimal as required.
\end{proof}

%%%%%%%%%%%%%
%%%%%%%%%%%%%
\section{The graph of minimal sets} \label{sec:graphofminsets}
%%%%%%%%%%%%%
%%%%%%%%%%%%%

%\mymarpar{Removed definition of graph of minimal sets, as it is defined in the introduction.}
\emph{In this section, we study the graph of minimal sets, $\mathcal{G}$ (see the introduction for a definition).}
\vspace{.3cm}

%We note that the
%graph $\mathcal{G}$ has a natural grading corresponding to the sizes of minimal
%sets.
%\mymarpar{Ivan: Removed grading comment as it is addressed in intro.}

%%%%%%%%%%%%%%%%%%%%%%%%%%%%%%
\subsection{Dead and mortal sets}
%%%%%%%%%%%%%%%%%%%%%%%%%%%%%%
A finite subset $A\subset X^0$ is \emph{efficient} if for every $B \subset X^{0}$, $\lvert \partial B\rvert=\lvert \partial A\rvert$ implies $\lvert A\rvert \geq \lvert B\rvert$, and we say that $A$ is \emph{inefficient} otherwise.
Equivalently, $A$ is efficient if  $\lvert \partial B\rvert=\lvert \partial A\rvert$ implies that $\Exc(A) \ge \Exc(B)$  by Lemma~\ref{lemma_def_same_boundary}.
The main result of this subsection is Theorem~\ref{thm:dead_char2}, which characterizes dead sets (defined in the introduction) as inefficient sets, which in turn are classified in terms of specific boxes (Lemma~\ref{lem:boxefficient}),
%\mymarpar{Ivan: Is this a bit confusing?}
 and Theorem~\ref{thm:mortalchar} characterizing mortal sets.
%Dead sets correspond exactly to vertices of $\mathcal{G}$ which are not adjacent to a vertex of a higher grading.
%An efficient set is necessarily minimal, but there exist minimal sets that are not efficient. 

We first show that efficient sets are boxes and are minimal:
 \begin{lemma}\label{lem:dead_sat+box}
Every efficient set is minimal, saturated and a box.   
 \end{lemma}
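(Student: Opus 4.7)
The plan is to establish the three conclusions in order: first minimal, then a box, and finally saturated (the last being immediate from the second).

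For minimality, I will invoke Lemma~\ref{lem:excess_well_defined} to produce a minimal set $B$ with $|\partial B| = |\partial A|$. Since $B$ is minimal, Lemma~\ref{lem:+ve exc minimal} gives $\Exc(B) \geq 0$. The hypothesis that $A$ is efficient means $|A| \geq |B|$, so by Lemma~\ref{lemma_def_same_boundary} we have $\Exc(A) - \Exc(B) = |A| - |B| \geq 0$, and hence $\Exc(A) \geq 0$. One more application of Lemma~\ref{lem:+ve exc minimal} yields that $A$ is minimal.

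Now that $A$ is known to be minimal, the key step is to show that $A$ equals its enclosing box. I will apply Lemma~\ref{lem:nested_sequence_to_enc_box}, which produces a nested sequence of minimal sets $A = A_0 \subsetneq A_1 \subsetneq \cdots \subsetneq A_N = \enc(A)$ with $|A_{i+1}| = |A_i|+1$, and crucially guarantees $|\partial A| = |\partial \enc(A)|$. Efficiency of $A$ applied to the set $\enc(A)$ (which shares its boundary size with $A$) gives $|A| \geq |\enc(A)|$. Combined with the obvious inclusion $A \subseteq \enc(A)$, this forces $N = 0$ and $A = \enc(A)$, so $A$ is a box. Finally, Lemma~\ref{lem:box_is_sat} tells us that every box is saturated, completing all three claims.

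There is no serious obstacle here; the delicate point is simply noticing that the two hypotheses come together cleanly once minimality is in hand, because Lemma~\ref{lem:nested_sequence_to_enc_box} provides a candidate set of the same boundary size but strictly larger cardinality unless $A$ is already its enclosing box. The proof is essentially a two-line deduction once we observe that efficiency is precisely the maximality property needed to rule out any proper extension with the same boundary.
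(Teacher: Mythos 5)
Your proof is correct, and it reaches the conclusions in a different order than the paper does. For minimality you use the excess formalism ($\Exc(B)\ge 0$ for the minimal comparison set $B$, efficiency gives $|A|\ge|B|$, so $\Exc(A)\ge 0$ by Lemma~\ref{lemma_def_same_boundary}); the paper instead argues by contradiction directly from Lemma~\ref{lem:bdry monotone}, but these are interchangeable since the paper itself records the excess reformulation of efficiency just before the lemma. The real divergence is in the remaining two claims. The paper first shows $A$ is \emph{saturated} by an elementary argument (if some $v$ could be added without increasing the boundary, minimality forces $|\partial(A\cup\{v\})|=|\partial A|$, and then $A\cup\{v\}$ is a larger set with the same boundary, contradicting efficiency), and only then invokes Proposition~\ref{prop:finminsat -> box} to conclude $A$ is a box. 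You instead go straight to the box conclusion via Lemma~\ref{lem:nested_sequence_to_enc_box}: since $|\partial A|=|\partial(\enc(A))|$, efficiency forces $|A|\ge|\enc(A)|$, and the inclusion $A\subseteq\enc(A)$ then gives $A=\enc(A)$; saturation follows for free from Lemma~\ref{lem:box_is_sat}. Both routes ultimately rest on Proposition~\ref{prop:finminsat -> box} (your Lemma~\ref{lem:nested_sequence_to_enc_box} is proved using it), so neither is more elementary, but your packaging is slightly slicker in that it obtains ``box'' and ``saturated'' in one stroke, whereas the paper's version makes the saturation step self-contained and transparent.
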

\begin{proof}
Let $A$ be an efficient set. 
By Lemma~\ref{lem:excess_well_defined}, 
there exists  a minimal set $A'$
%a Wang-Wang set $WW_m$
 such that $|\partial A'| = |\partial A|$. Since $A$ is efficient, we have $|A| \geq |A'|$. 
% \mymarpar{  Changed this proof to use arbitrary minimal sets rather than Wang-Wang sets, as we were not using any extra structure of Wang-Wang sets.}
If $A$ is not minimal, then there exists a minimal set $A''$ such that $\lvert A'' \rvert = \lvert A \rvert \geq|A'| $ and 
$|\partial A''| < |\partial A| = |\partial A'|$. 
This contradicts Lemma~\ref{lem:bdry monotone}, so $A$ must be minimal. If $A$ is not saturated, then there exists a vertex $v$ such that $|\partial (A\cup \{v\})| \leq |\partial A|$,  which  contradicts Lemma~\ref{lem:bdry monotone} and the fact that $A$ is minimal and efficient.
Proposition~\ref{prop:finminsat -> box} now implies that every efficient set is a box.
\end{proof}

We say $A\subset X^0$ is a \emph{Wang--Wang box} if it is simultaneously a box and a  Wang--Wang set.  

\begin{remark} \label{rmk:wang_wang_boxes}
	A subset of $X^0$ is a  Wang--Wang box if and only if it is congruent to either  $B(m,m)$ or $B(m,m+1)$ for some  $m\in \bZ$ (recall Notation~\ref{notation:boxes}). 
\end{remark}

We now characterize efficient sets.

\begin{lemma}\label{lem:boxefficient}
A subset of $X^0$ is efficient if and only if it is congruent to either a  Wang--Wang box or  $B(m-1,m+1)$ for some odd $m\in \bN$.
\end{lemma}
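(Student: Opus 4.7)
The plan is to combine Lemma~\ref{lem:dead_sat+box}---every efficient set is a box---with the excess formula in Theorem~\ref{thm:box_excess} to reduce the claim to a finite case analysis. Since $|\partial A'| = |\partial A|$ implies $\Exc(A) - \Exc(A') = |A| - |A'|$ by Lemma~\ref{lemma_def_same_boundary}, a box $B$ is efficient precisely when $\Exc(B) \ge \Exc(B')$ for every box $B'$ with $|\partial B'| = |\partial B|$. By Lemma~\ref{lem:bdry}, $|\partial B|$ is determined by the sum $s := \alpha + \beta$ of the modulus pair $\{\alpha,\beta\}$, so we may fix $s$ and seek the boxes of that boundary with maximal excess.

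Remark~\ref{rmk:B vs B hat} immediately eliminates the boxes $\hat B(\alpha,\beta)$: when $\alpha$ and $\beta$ are both even, $\hat B(\alpha,\beta)$ and $B(\alpha,\beta)$ have the same boundary but $\hat B$ has one fewer vertex. Hence every efficient box is congruent to some $B(\alpha,\beta)$ with $\alpha \le \beta$ and $\alpha + \beta = s$. Writing $r = s/2$ and $k = (\beta-\alpha)/2 \ge 0$, Theorem~\ref{thm:box_excess} gives $\Exc\bigl(B(\alpha,\beta)\bigr) = \lfloor (\lfloor r \rfloor - k^2)/2 \rfloor$, a function that is non-increasing in $k$. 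The efficient boxes are therefore exactly those whose $k$ attains or ties the maximum of this expression for the given $s$.

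I would then run the case analysis. When $s$ is odd (so $r$ is a half-integer and $k \in \{1/2, 3/2, \dots\}$), plugging into the formula shows only $k = 1/2$ realises the maximum, giving the Wang--Wang box $B(\lfloor r \rfloor, \lceil r \rceil)$. When $s$ is even (so $r,k$ are integers), the candidate $k = 0$ yields $B(r,r)$ with excess $\lfloor r/2 \rfloor$, the candidate $k = 1$ yields $B(r-1,r+1)$ with excess $\lfloor (r-1)/2 \rfloor$, and any $k \ge 2$ drops the excess by at least $2$. A direct comparison of the two floor expressions shows $\lfloor (r-1)/2 \rfloor = \lfloor r/2 \rfloor$ iff $r$ is odd; so $B(r,r)$ is always efficient, while $B(r-1,r+1)$ is efficient precisely when $r$ is odd. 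Setting $m = r$ in this odd case produces the boxes $B(m-1, m+1)$ from the statement, completing the proof.

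The only real obstacle is a bit of care with parity and with floors of half-integers in the excess formula; once the formula is unpacked, everything reduces to elementary arithmetic and Lemma~\ref{lemma_def_same_boundary}, with no geometric input beyond Lemma~\ref{lem:dead_sat+box}.
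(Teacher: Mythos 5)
Your overall strategy is the same as the paper's: reduce to boxes via Lemma~\ref{lem:dead_sat+box}, discard the $\hat B(\alpha,\beta)$ boxes by Remark~\ref{rmk:B vs B hat}, note that Lemma~\ref{lem:bdry} pins down the modulus sum from the boundary, and then run the parity case analysis on $\Exc\bigl(B(r-k,r+k)\bigr)$ using Theorem~\ref{thm:box_excess} and Lemma~\ref{lemma_def_same_boundary}. Your arithmetic in both parity cases is correct and matches the paper's computation, including the observation that $\lfloor (r-1)/2\rfloor = \lfloor r/2\rfloor$ exactly when $r$ is odd.

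There is one genuine gap, in the converse direction. You assert that a box $B$ is efficient \emph{precisely when} $\Exc(B)\ge\Exc(B')$ for every \emph{box} $B'$ with the same boundary. The forward implication is immediate (an efficient set beats all sets, in particular all boxes), but the reverse implication is not: efficiency requires $|A|\le|B|$ for \emph{every} subset $A\subset X^0$ with $|\partial A|=|\partial B|$, and a priori a non-box could beat all boxes. To close this you need an extra step, e.g.: for a fixed boundary value the sizes of sets realizing it are bounded (by Lemma~\ref{lem:bdry monotone} and the unboundedness of $|\partial WW_m|$), so a maximizer exists; that maximizer is efficient by definition, hence a box by Lemma~\ref{lem:dead_sat+box}, and therefore the maximum over boxes equals the maximum over all sets. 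The paper avoids this by proving the converse directly: Wang--Wang boxes are saturated, so $|\partial WW_{n+1}|>|\partial WW_n|$, and Lemma~\ref{lem:bdry monotone} then shows any set larger than $WW_n$ has strictly larger boundary; efficiency of $B(m-1,m+1)$ for odd $m$ then follows since it has the same boundary and size as $B(m,m)$. Either patch is short, but as written your proposal only establishes that efficient sets lie in the stated list, not that every set in the list is efficient.
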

\begin{proof}
  Let $B$ be an efficient set. By Lemma~\ref{lem:dead_sat+box}, $B$ is a box. 
   Remark~\ref{rmk:B vs B hat} implies that no box of the form  $\hat{B}(\alpha, \beta)$ is  efficient. 
%It follows from Remark~\ref{remark_compute_def} and the definition of efficiency that  $B$ is efficient if and only if $\Exc(B)\geq \Exc(A)$ for all $A$ such that $\lvert\partial B\rvert = \lvert\partial A\rvert$.  
%We will now apply Lemma~\ref{thm:box_excess} to characterize $B$.
It thus follows from 
	Proposition~\ref{prop:charbox}
 that $B$ is congruent to  $B(\alpha, \beta)$  for some $\alpha, \beta \in \mathbb{Z}$.
Without loss of generality, we may assume that $\beta \ge \alpha$, and we 
set $r = \frac{\alpha + \beta}{2}$ and $k = \frac{\beta - \alpha}{2}$.
Note that $\alpha = r - k$ and $\beta = r + k$. 
% \mymarpar{ Added $\beta \ge \alpha$ assumption as it was necessary for the last sentence to be true. Also, removed the modulus sign from $k = \frac{|\beta - \alpha|}{2}$.}

Suppose first that  $\alpha$ and $\beta$ have the same parity. Then $r$ and $k$ are both integers.
By Lemma~\ref{lem:bdry}, $\lvert\partial B(r-k,r+k)\rvert=\lvert\partial B(r,r)\rvert$.
Since $B$ is efficient, Theorem~\ref{thm:box_excess} gives:
\begin{equation*}
\Exc(B(r-k,r+k)) = \left \lfloor \frac{ \lfloor r \rfloor -k^2}{2} \right \rfloor\geq \Exc (B(r,r))= \left \lfloor \frac{ \lfloor r \rfloor }{2} \right \rfloor. \end{equation*}

By the above equation, either $k=0$, or $r$ is odd and $k=1$. Thus $B$ is either the Wang--Wang box $B(\alpha,\alpha)$ or the box $B(m-1, m+1)$ for $m$ odd, respectively. 

On the other hand, suppose  $\alpha$ and $\beta$ have different parity. Then $r = s+\frac{1}{2}$ and $k = t+\frac{1}{2}$ for some  $s,t\in \bN$. By Lemma~\ref{lem:bdry}, $\lvert\partial B(r-k,r+k)\rvert=\lvert\partial B(r-1/2,r+1/2)\rvert$.
By applying Theorem~\ref{thm:box_excess} and using that $B$ is efficient and $t^2 + t$ is even, we get the following:
\begin{equation*}
	\Exc(B(r-k,r+k))
	=\left \lfloor \frac{ s -t^2-t-\frac{1}{4}}{2} \right \rfloor
	=\left \lfloor \frac{ s -\frac{1}{4}}{2} \right 
	\rfloor-\frac{t^2+t}{2}	
	\geq \Exc (B(r-1/2,r+1/2))
	\geq \left \lfloor \frac{ s -\frac{1}{4}}{2} \right \rfloor 
\end{equation*} 
%\mymarpar{Ivan: Changed order of equalities to make it more clear the computation that is being done.}
Thus $t^2+t\leq 0$, which implies $t=0$ and hence $\beta = \alpha + 1$. Thus $B$ is congruent to $B(\alpha,\alpha+1)$, which is a Wang--Wang box.

For the converse, 
suppose we are given a Wang--Wang box $WW_n$.
By Lemma~\ref{lem:box_is_sat}, a box is saturated. This implies $\lvert\partial WW_{n+1}\rvert > \lvert \partial WW_n\rvert$. 
Since $WW_{n+1}$ is minimal,  Lemma~\ref{lem:bdry monotone} ensures that  given any $B \subset X^0$ with $\lvert B\rvert> \lvert WW_{n}\rvert$ then   $\lvert\partial B\rvert>\lvert\partial WW_{n}\rvert$. Thus a Wang--Wang box $WW_n$ is efficient. 
By Lemma~\ref{lem:bdry} and Theorem~\ref{thm:box_excess}, for odd $m$, we have $\lvert\partial B(m-1,m+1)\rvert=\lvert\partial B(m,m)\rvert$ and $\Exc(B(m-1,m+1))=\Exc(B(m,m))$.
As $B(m,m)$ is efficient, $B(m-1, m+1)$ is also.  
\end{proof}

Before proving the next result, we first show that one can always add a vertex to a box such that the resulting
set has boundary one larger than the box, as long as the box contains at least
two vertices.

\begin{lemma} \label{lem:box_increase_bdry}
	Let $B \subset X^0$ be a box containing at least two vertices. Then $\lvert \partial ( B \cup \{v\}) \rvert
	= \lvert \partial B \rvert + 1$
	for some $v \in X^0 \setminus B$.
\end{lemma}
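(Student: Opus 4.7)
The plan is to dispatch on the congruence class of $B$ given by Proposition~\ref{prop:charbox} and Convention~\ref{conv:box}, and in each case to write down an explicit candidate $v$ and check by direct inspection that it increases the boundary by exactly one. The underlying counting principle I would use is
\[
|\partial(B \cup \{v\})| - |\partial B| = m_v - [v \in \partial B],
\]
where $m_v$ is the number of $\ell_1$-neighbors of $v$ lying outside $N(B) = B \cup \partial B$ and $[\cdot]$ is the Iverson bracket. The identity holds because adding $v$ removes $v$ itself from the boundary (exactly when $v \in \partial B$) and creates new boundary vertices only among those neighbors of $v$ not already in $N(B)$. Thus the task reduces to producing $v \in \partial B$ having exactly two of its four $\ell_1$-neighbors outside $N(B)$.

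Because $|B| \geq 2$, Lemma~\ref{lem:area} and Proposition~\ref{prop:charbox} leave, up to a graph automorphism, three canonical forms: (i) $B = B(\alpha, \beta)$ with $\alpha, \beta \geq 1$; (ii) $B = B(0, 2n)$ with $n \geq 1$ (a degenerate diagonal segment); or (iii) $B = \hat B(\alpha, \beta)$ with $\alpha, \beta \geq 2$ both even.

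In case (i) I would take $v = (-1, 0)$: the neighbor $(0, 0) \in B$ puts $v \in \partial B$; the neighbors $(-2, 0)$ and $(-1, -1)$ both lie in the half-plane $\{y + x < 0\}$ and all of their neighbors satisfy $y + x \leq 0$ with $y - x \neq 0$, so they have no neighbor in $B$ and thus lie outside $N(B)$; while $(-1, 1)$ lies in $B$ when $\alpha \geq 2$ and in $\partial B$ (via $(0, 1) \in B$, which uses $\beta \geq 1$) when $\alpha = 1$. In case (ii) I would take $v = (0, 1)$: the neighbors $(0, 0), (1, 1) \in B(0, 2n)$ place $v \in \partial B$, and $(-1, 1), (0, 2)$ have all their neighbors satisfying $y - x \neq 0$ and so disjoint from $B(0, 2n)$, hence outside $N(B)$. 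In case (iii) I would take $v = (\beta/2, \beta/2)$, which sits just beyond the east extremal line $y + x = \beta - 1$: the neighbor $(\beta/2 - 1, \beta/2) \in \hat B$ puts $v \in \partial B$, the neighbor $(\beta/2, \beta/2 - 1) \in \partial \hat B$ since it is adjacent to $(\beta/2 - 1, \beta/2 - 1) \in \hat B$, and the remaining two neighbors $(\beta/2 + 1, \beta/2)$ and $(\beta/2, \beta/2 + 1)$ have all of their own neighbors satisfying $y + x \geq \beta > \beta - 1$, so they lie outside $N(\hat B)$.

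The main obstacle is that no uniform choice of $v$ works across all three canonical forms. The corner-adjacent vertex $(-1, 0)$ that succeeds for a nondegenerate box already fails for the degenerate line $B(0, 2n)$, since it then contributes three new boundary vertices rather than two; conversely, the ``two-neighbors-in-$B$'' vertex $(0, 1)$ that handles the degenerate line fails when $\alpha \geq 1$, because in that case $(0, 1)$ has additional neighbors already inside $B$. The hat box case presents its own subtlety, as $\hat B(\alpha, \beta)$ has no integer corners and so requires a vertex adjacent to an ``almost-corner'' of the east extremal line. Keeping the bookkeeping of neighbors in $B$, in $\partial B$, and outside $N(B)$ straight in each case is the only real work; once the three explicit candidates above are chosen, all the checks are routine.
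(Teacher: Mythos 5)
Your proof is correct, but it is organized differently from the paper's. You first reduce to the three congruence classes of boxes with at least two vertices via Proposition~\ref{prop:charbox} (non-degenerate $B(\alpha,\beta)$, the diagonal segment $B(0,2n)$, and the cornerless $\hat B(\alpha,\beta)$) and then exhibit an explicit vertex $v$ in coordinates for each class, verifying via the identity $|\partial(B\cup\{v\})|-|\partial B|=m_v-[v\in\partial B]$ that exactly two neighbors of $v$ fall outside $N(B)$; I checked each of the three candidates and the bookkeeping is right (including the edge cases $\alpha=1$ in case (i), where $(-1,1)$ lands in $\partial B$ rather than $B$, and $\beta=2$ in case (iii)). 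The paper instead avoids the classification entirely: it picks a vertex $(x,y)\in B$ with $y$ maximal and runs a local case analysis on which of its neighbors lie in $B$, using saturation of boxes (Lemma~\ref{lem:box_is_sat}) together with the forbidden configurations of Lemma~\ref{lemma_forbidden} to certify that the chosen $v$ has the right number of new boundary neighbors. Your route buys complete explicitness and mechanical verifiability at the cost of invoking the standard-form classification; the paper's route is coordinate-light and reuses the saturation machinery already developed, but its case distinctions are a bit more implicit. One small presentational nit: in case (i) your justification that $(-2,0)$ and $(-1,-1)$ lie outside $N(B)$ is phrased as ``all of their neighbors satisfy $y+x\le 0$ with $y-x\neq 0$,'' which as stated would not rule out membership in $B$ for a neighbor with $y+x=0$ and $0<y-x\le\alpha$; what actually holds (and what you should say) is that every neighbor of these two vertices has $y+x<0$ strictly, so none can lie in $B$.
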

\begin{proof}
	Let $(x, y)$ be a vertex of $B$ with $y$ maximal. 
	Suppose first that $(x+1, y) \in B$. Since $B$ is saturated by Lemma~\ref{lem:box_is_sat}, it follows from Lemma~\ref{lemma_forbidden} that  $(x-1, y) \notin B$.
	The claim now follows for this case by noting that $\lvert \partial \big( B \cup \{(x, y+1)\} \big) \rvert
	= \lvert \partial B \rvert+1$.
	A similar argument shows the claim when $(x-1, y) \in B$. 
	
	Next consider the case where $(x-1, y) \notin B$ and $(x+1, y) \notin B$.
	Suppose first that $(x, y-1) \in B$. 
	As $B$ is a box with $(x,y) \in B$ and $(x+1, y) \notin B$, it follows that
			$(x+2, y) \notin B \cup \partial B$. 
	%$(x+2, y) \notin B$. 
	The claim now follows by noting that $\lvert \partial \big(B \cup \{(x+1, y)\}
	\big) \rvert = \lvert \partial B \rvert + 1$.
	Finally, suppose that $(x, y-1) \notin B$. 
	In this case, as $B$ is a box and $|B| \ge 2$, we must have that either
	$(x-1, y-1) \in B$ or $(x+1, y-1) \in B$. Without loss of generality, suppose
	the former is true. As $B$ is saturated, Lemma~\ref{lemma_forbidden} implies that  $(x-2, y-1) \notin
	B$. It now follows that $\lvert \partial \big( B \cup \{(x-1, y)\} \big) \rvert = \lvert
	\partial B \rvert + 1$.
\end{proof}

We now give a characterization of dead sets. 

\begin{thm}\label{thm:dead_char2} Let $A\subset X^0$ be a minimal set. The following are equivalent: 
\begin{enumerate}
 \item $A$ is dead. 
 \item $A$ is an inefficient box. 
 \item $A$ is a box that is not congruent to either a Wang--Wang box or to a box of the form $B(m-1,m+1)$ for odd $m\in \bZ$.
\end{enumerate} 
\end{thm}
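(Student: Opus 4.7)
The equivalence of (2) and (3) is immediate from Lemma~\ref{lem:boxefficient}, so the real content is the equivalence (1) $\Leftrightarrow$ (2).

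For the direction (2) $\Rightarrow$ (1), I would argue by contradiction: suppose $A$ is an inefficient box but is not dead, so there exists a minimal $A' \supset A$ with $|A'| = |A| + 1$. Since every box is saturated by Lemma~\ref{lem:box_is_sat}, we have $|\partial A'| > |\partial A|$. On the other hand, inefficiency supplies some $B \subset X^0$ with $|\partial B| = |\partial A|$ and $|B| \geq |A| + 1 = |A'|$; then Lemma~\ref{lem:bdry monotone} applied to the minimal set $A'$ and the (at least as large) set $B$ forces $|\partial A'| \leq |\partial B| = |\partial A|$, a contradiction.

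For the direction (1) $\Rightarrow$ (2), I would first observe that a dead minimal set $A$ must be saturated: otherwise some $v \notin A$ would satisfy $|\partial(A \cup \{v\})| \leq |\partial A|$, while Lemma~\ref{lem:bdry monotone} forces the reverse inequality, hence equality; a second application of Lemma~\ref{lem:bdry monotone} would then show $A \cup \{v\}$ is minimal, contradicting deadness. So $A$ is a saturated minimal set, and Proposition~\ref{prop:finminsat -> box} identifies $A$ as a box. To conclude that $A$ is inefficient, suppose for contradiction that $A$ is efficient; Lemma~\ref{lem:boxefficient} then shows $A$ is congruent either to a Wang--Wang box $WW_n$ or to $B(m-1, m+1)$ for some odd $m \in \bN$. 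The first case is immediate, since $WW_{n+1}$ is a minimal extension by one vertex. For the second case, after normalizing so that $A = B(0, m-1, 0, m+1)$, I propose to take $v = (0, m)$ and verify that $A \cup \{v\}$ is minimal via Theorem~\ref{thm:minimal set characterization}.

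The main obstacle is this last verification. The plan is to check three things. First, $\enc(A \cup \{v\}) = B(m, m+1)$, which is a Wang--Wang box. Second, $|\partial(A \cup \{v\})| = |\partial B(m, m+1)|$: a direct check of the four neighbors of $v$ shows that $v$ lies in $\partial A$ (it is adjacent to both $(1,m)$ and $(0,m-1)$, which lie in $A$) but becomes interior, while exactly two new boundary vertices, $(-1, m)$ and $(0, m+1)$, appear; hence the boundary grows by exactly one, matching the value $2m+5$ predicted by Lemma~\ref{lem:bdry}. Third, $|\enc(A \cup \{v\}) \setminus (A \cup \{v\})| = (m-1)/2$, which by Lemma~\ref{lem:area} and Lemma~\ref{lem:special box1 excess} (or Theorem~\ref{thm:box_excess}) equals $\Exc(B(m, m+1))$. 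Theorem~\ref{thm:minimal set characterization} then yields that $A \cup \{v\}$ is minimal, contradicting the assumption that $A$ is dead and completing the proof.
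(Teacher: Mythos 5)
Your proof is correct, and the overall skeleton matches the paper's: (2)$\Leftrightarrow$(3) is Lemma~\ref{lem:boxefficient}; your (2)$\Rightarrow$(1) argument (saturation of boxes plus Lemma~\ref{lem:bdry monotone} applied to a witness of inefficiency) is the paper's argument phrased as a contradiction; and your derivation that a dead set is saturated, hence a box by Proposition~\ref{prop:finminsat -> box}, is exactly the paper's first step of (1)$\Rightarrow$(2).

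Where you genuinely diverge is in finishing (1)$\Rightarrow$(2). The paper never invokes the classification of efficient boxes for this implication: it uses Lemma~\ref{lem:box_increase_bdry} to produce a vertex $v$ with $\lvert\partial(A\cup\{v\})\rvert=\lvert\partial A\rvert+1$, then argues that since $A$ is dead the minimal set $C$ of size $\lvert A\rvert+1$ must satisfy $\lvert\partial C\rvert=\lvert\partial A\rvert$ by Lemma~\ref{lem:bdry monotone}, so $C$ itself witnesses inefficiency. You instead prove the contrapositive of (1)$\Rightarrow$(3) by running through the two families of efficient boxes and exhibiting, for each, an explicit minimal one-vertex extension. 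Your verification for $B(m-1,m+1)\cup\{(0,m)\}$ checks out: the boundary count $2m+5$, the enclosing box $B(m,m+1)$, and $N=(m-1)/2=\Exc(B(m,m+1))$ are all correct, so Theorem~\ref{thm:minimal set characterization} applies. Interestingly, this is essentially the computation the paper performs later in the proof of Theorem~\ref{thm:mortalchar} to show $B(m-1,m+1)$ is immortal. The trade-off: the paper's route is shorter and independent of the classification (it needs only the abstract existence of a boundary-increasing vertex), while yours front-loads a concrete computation that the paper would need anyway for the mortality results, at the cost of making (1)$\Rightarrow$(2) depend on Lemma~\ref{lem:boxefficient} and Theorems~\ref{thm:minimal set characterization} and~\ref{thm:box_excess}.
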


\begin{proof}
	Suppose $A$ is dead. We first show that $A$ is saturated. Let $v\in X^0\setminus A$. If $\left| \partial (A\sqcup \{v\}) \right|\leq \left| \partial A \right|$, then Lemma~\ref{lem:bdry monotone} would imply that $A\sqcup  \{v\}$ is minimal, contradicting the hypothesis that $A$ is dead. Thus $\left| \partial (A\sqcup \{v\}) \right|> \left| \partial A \right|$ for all $v\in X^0\setminus A$, ensuring that $A$ is saturated. 
	Since  $A$ is saturated and minimal,  Proposition~\ref{prop:finminsat -> box} implies it is a box. 
	Since a set with one vertex is not dead, $|A| >1$. Hence Lemma~\ref{lem:box_increase_bdry} ensures that there exists a vertex $v \in X^0 \setminus A$ such that $\lvert\partial ( A \cup \{v\})\rvert = \lvert\partial A\rvert + 1$.
	Since $A$ is dead, $A \cup \{v\}$ is not minimal and so  there exists a minimal set $C$ such that $|C| = |A|+1$ and $|\partial C| < |\partial A|+1$. As $A$ is minimal, Lemma~\ref{lem:bdry monotone} implies $|\partial C| = |\partial A|$. Thus $A$ is an inefficient box. %there exists a minimal set $C$ such that  $\lvert B\rvert=\lvert A\rvert +1$ and  $\lvert\partial A\rvert=\lvert\partial B\rvert$. Thus $A$ is an inefficient box.
	
	Now suppose $A$ is an inefficient box. Since $A$ is saturated by Lemma~\ref{lem:box_is_sat}, $\lvert \partial(A\cup \{v\})\rvert >\lvert \partial A\rvert$ for every $v\in X^0\setminus A$. As $A$ is inefficient, there exists a set $C$ such that $\lvert C\rvert >\lvert A\rvert$ and $\lvert \partial C\rvert=\lvert \partial A\rvert$. Lemma~\ref{lem:bdry monotone} now implies 
	that $A\cup \{v\}$ isn't minimal for any $v\in X^0\setminus A$.
	Hence, $A$ is dead. 
	
    The equivalence of (2) and (3) is shown in Lemma~\ref{lem:boxefficient}.
\end{proof}

%\begin{thm} \label{thm:dead_char2}
%	A minimal set is dead if and only if it is a box that is not congruent to either a Wang--Wang box or to a box of the form $B(r-1,r+1)$ for odd $r\in \bZ$. \end{thm}

Finally, we characterize mortal sets:
\begin{thm}\label{thm:mortalchar}
	A minimal set is mortal if and only if its enclosing box is dead.
\end{thm}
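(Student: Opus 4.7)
The plan is to prove the two directions separately, both leveraging Lemma~\ref{lem:nested_sequence_to_enc_box}. Writing $B := \enc(A)$, for the direction ``$B$ dead $\Rightarrow A$ mortal'', the lemma implies that any maximal nested sequence of minimal sets $A = A_0 \subset A_1 \subset \cdots$ with $|A_{i+1}| = |A_i|+1$ reaches $B$ at position exactly $|B|-|A|$; since $B$ is dead, the sequence terminates there. Because every nested sequence starting at $A$ embeds in such a maximal one, the life expectancy of $A$ equals $|B| - |A| < \infty$.

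For the converse I will prove the contrapositive: if $B$ is not dead, then $A$ is immortal. By Theorem~\ref{thm:dead_char2} together with Lemma~\ref{lem:boxefficient}, $B$ is then an efficient box, congruent either to a Wang--Wang box or to $B(m-1, m+1)$ for some odd $m \in \bN$. Lemma~\ref{lem:nested_sequence_to_enc_box} supplies an initial segment $A = A_0 \subset A_1 \subset \cdots \subset A_N = B$, so it suffices to extend past $B$ indefinitely. In the Wang--Wang case, I will write $B = \phi(WW_k)$ for some $\phi \in \Aut(X)$ and set $A_{N+i} := \phi(WW_{k+i})$ for $i \ge 1$, which immediately finishes the construction.

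In the remaining case $B \cong B(m-1, m+1)$ with $m$ odd, the strategy is to first extend $B$ by a single vertex so that its enclosing box becomes the Wang--Wang box $B(m, m+1)$, and then apply the Wang--Wang case. Up to congruence I assume $B = B(0, m-1, 0, m+1)$, take $v := \bigl(\tfrac{1-m}{2}, \tfrac{m+1}{2}\bigr)$ (so that $y_v - x_v = m$ and $y_v + x_v = 1$), and set $A_{N+1} := B \cup \{v\}$. A short computation shows $\enc(A_{N+1}) = B(m, m+1)$, and Theorem~\ref{thm:box_excess} yields $\Exc(B(m, m+1)) = (m-1)/2 = |\enc(A_{N+1}) \setminus A_{N+1}|$. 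Enumerating the four neighbors of $v$ reveals that exactly two lie in $B$ and the other two lie outside $N(B)$, so $|\partial A_{N+1}| = |\partial B| - 1 + 2 = 2m + 5 = |\partial B(m, m+1)|$ by Lemma~\ref{lem:bdry}. Theorem~\ref{thm:minimal set characterization intro} then guarantees that $A_{N+1}$ is minimal with a Wang--Wang enclosing box, and the Wang--Wang case completes the infinite extension.

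The main technical obstacle is the boundary calculation in this last case, which requires careful enumeration of $v$'s neighbors to confirm that $v$ contributes a net of one new boundary vertex when adjoined to $B$; beyond this, the proof reduces to direct applications of Theorem~\ref{thm:minimal set characterization intro}, Theorem~\ref{thm:box_excess}, Lemma~\ref{lem:nested_sequence_to_enc_box}, and Lemma~\ref{lem:boxefficient}.
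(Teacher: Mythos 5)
Your proof is correct and follows essentially the same route as the paper: reduce to the enclosing box via Lemma~\ref{lem:nested_sequence_to_enc_box}, use Theorem~\ref{thm:dead_char2} to identify the non-dead boxes, handle Wang--Wang boxes by the infinite nested sequence $(WW_n)$, and handle $B(m-1,m+1)$ by adjoining a single well-chosen vertex whose enclosing box is the Wang--Wang box $B(m,m+1)$. The only cosmetic differences are your choice of the added vertex and your use of the excess criterion (rather than saturation of $B(m,m)$) to verify minimality of the augmented set.
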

\begin{proof}
	Let $A$ be a minimal set. 
	We first show that $A$ is mortal if and only if $\enc(A)$ is mortal.
	By Lemma~\ref{lem:nested_sequence_to_enc_box}, 
	there exists a sequence $A = A_0 \subset A_1 \subset \dots \subset \enc(A)$ of nested minimal sets such that $|A_{i+1}| = |A_{i}| + 1$. Thus, if $A$ is mortal, so is $\enc(A)$. On the other hand, suppose $\enc(A)$ is mortal. By Lemma~\ref{lem:nested_sequence_to_enc_box}, any maximal nested sequence $A = A_0' \subset A_1' \subset \dots $ of minimal sets with $|A_{i+1}'| = |A_i'| + 1$ must include $\enc(A)$ and, in particular, must be finite as $\enc(A)$ is mortal. Thus, $A$ is mortal.
	
	Consequently, in order to prove the theorem, we need to show that a box $B$ is mortal if and only if it is dead. Since dead sets are mortal, this reduces to demonstrating that a box which is not dead is immortal. By Theorem~\ref{thm:dead_char2}  we only need to show that Wang--Wang boxes and $B(m-1, m+1)$, for odd $m \in \mathbb{Z}$, are immortal sets. 
	Wang--Wang boxes are immortal because they are contained in the infinite nested sequence of minimal sets $(WW_n)_{n=1}^\infty$. 

	%On the other hand, 
	Let $B=B(m-1,m+1)$ for some odd $m\in \bZ$. Let $v=(0,m)$ and $B'\coloneqq B\sqcup \{v\}$. Since $v$ is not contained in $B$ but is adjacent to $(0,m-1),(1,m)\in B$, it follows that $\lvert \partial B'\rvert=\lvert \partial B\rvert+1$.  By Lemma~\ref{lem:bdry}, Lemma~\ref{lem:area} and as $m$ is odd, $\lvert B\rvert =\lvert B(m,m)\rvert$ and  $\lvert \partial B\rvert=\lvert \partial B(m,m)\rvert$. Since $B(m,m)$ is  a Wang--Wang box and in particular,  it is saturated, then any minimal set of size $\lvert B\rvert+1$ must have boundary strictly greater than $\lvert \partial B\rvert$. Thus, $B'$ is a minimal set.
	By Lemma~\ref{lem:nested_sequence_to_enc_box}, there exists a sequence of minimal sets $B \subset B' \subset \dots \subset \enc(B')$ such that the size of the symmetric difference between consecutive sets in this sequence is one. As $\enc(B') = B(m, m+1)$ is a Wang--Wang set (see Remark~\ref{rmk:wang_wang_boxes}), it is immortal. Thus, $B$ is immortal as well.
\end{proof}

%%%%%%%%%%%%%%%%%%%%%%%%%%%%%%
\subsection{Uniquely minimal sets}
%%%%%%%%%%%%%%%%%%%%%%%%%%%%%%
  In this subsection, we characterize uniquely minimal sets in $X$.
 %Recall from the introduction that a minimal set $A \subset X$ is \emph{uniquely minimal} if $A$ is congruent to any minimal set $B$ with $|A| = |B|$.
 %\mymarpar{Ivan: Removed definition of uniquely minimal.}
 %\mymarpar{ We should decide if we want to repeat this definition here.}
  Recall from the introduction that the \textit{grading} of a vertex of $\mathcal{G}$ is the size of one of its representatives, and uniquely minimal sets correspond exactly to vertices of $\mathcal{G}$ that are unique
  out of vertices of the same grading.

%\begin{lemma}\label{lem:remove vertices from bdry}
%	\mymarpar{Same as Lemma 7.2. Remove after we finish Section 7 - rg}
%	Let $B=B(\alpha,\beta)$, where $2\leq \alpha\leq \beta$.  Then for every 
%		integer
%	$0\leq i< \lfloor \frac{\beta}{2}+1 \rfloor$,
%	%\mymarpar{ This was missing $\lfloor ~ \rfloor$ which I believe is needed. For instance, the claim would not be true for $B(3,3)$. I checked and everything still seems to check out in next lemma.}
%	 there exists a set $A_i$ such that $\lvert A_i\rvert=\lvert B\rvert -i$, $\lvert \partial A_i\rvert=\lvert \partial B\rvert$ and $\enc(A_i)=B$.
%\end{lemma}
%\begin{proof}
%The line $y-x=0$ contains  $N\coloneqq \lfloor \frac{\beta}{2}+1\rfloor$ vertices of $B$.
%Let $v_1=(x_1,y_1),\dots, v_N=(x_N,y_N)$ be these vertices, ordered such that $y_1\leq \dots\leq y_N$. 
%For each  $1 \le i \leq  N$,  let $A_i \coloneqq  B \setminus \{v_1, \dots, v_i\}$,
% and let $A_0\coloneqq B$. It is clear that $\lvert A_i\rvert=\lvert B\rvert -i$ and $\enc(A_i)=B$ for all $i<N$. We now fix $0\leq i<N$. The hypothesis $\alpha\geq 2$ ensures that   $(x_{i+1}-t,y_{i+1})\in A_{i}$ for all $t=0,1,2$. It follows from the argument in the proof of Lemma~\ref{lemma_forbidden} that $\lvert\partial A_i\rvert=\lvert\partial A_{i-1}\rvert$. Therefore $\lvert \partial A_i\rvert=\lvert \partial B\rvert$ for all $0\leq i<N$.  
%\end{proof}

\begin{lemma}\label{lem:wwset not uniq min}
	Let $WW_n$ be a Wang--Wang set that is not a box. Then there exists a minimal set $A$ such that $\lvert A\rvert =n$ and $A$ is not congruent to $WW_n$.
\end{lemma}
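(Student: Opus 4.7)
The strategy is to exhibit an explicit alternative minimal set $A$ of size $n$ that is not congruent to $WW_n$, exploiting the asymmetric vertex ordering of Wang--Wang's construction. I would start by writing $WW_n = B \setminus T$, where $B := \enc(WW_n)$: by Lemma~\ref{lem:nested_sequence_to_enc_box} and Proposition~\ref{prop:finminsat -> box}, $B$ is a box strictly containing $WW_n$, and Theorem~\ref{thm:minimal set characterization} gives $1 \le |T| \le \Exc(B)$. Moreover, by the Wang--Wang construction, $T$ is a contiguous block of vertices at one end of the ``strip'' $B \setminus B_1$, where $B_1$ is the largest Wang--Wang box contained in $WW_n$; indeed, each Wang--Wang segment terminates at a Wang--Wang box (by comparing cardinalities), so the vertices in $WW_n \setminus B_1$ form a prefix of a single strip.

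When $|T| = 1$, so $n = |B| - 1$, I would exhibit a minimal \emph{box} of size $n$, which is automatically non-congruent to the non-box $WW_n$. Candidates, indexed by the modulus of $B$, are $\hat B(m, m)$ when $B = B(m, m)$ with $m$ even, $\hat B(m - 1, m + 1)$ when $B = B(m, m)$ with $m$ odd, and $B(m - 1, m + 2)$ when $B = B(m, m + 1)$. Each has size exactly $|B| - 1$ by Lemma~\ref{lem:area}, and the excess formulas of Theorem~\ref{thm:box_excess} (combined with Remark~\ref{rmk:B vs B hat}) give non-negative excess in each subcase, so minimality follows from Lemma~\ref{lem:+ve exc minimal}.

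When $|T| \ge 2$, I would construct $A := B \setminus T'$ where $T'$ has the same cardinality as $T$ but is a ``split'' configuration placing vertices at both ends of the strip $B \setminus B_1$ (which has length at least $|T| + 1 \ge 3$). Minimality of $A$ follows from Theorem~\ref{thm:minimal set characterization_refined}: the size bound $|T'| \le \Exc(B)$ is inherited from $T$, and the cone condition holds because each endpoint vertex of $T'$ is covered by a standard cone at that vertex pointing away from $B$, which together with the four cones comprising $X^0 \setminus B$ gives the required decomposition of $X^0 \setminus A$. Non-congruence with $WW_n$ then follows because the maximum pairwise $\ell_1$-distance among vertices of $T'$ strictly exceeds that of $T$ (as $T$ is contiguous, while $T'$ spans the whole strip), and graph automorphisms of $X$ preserve such distances. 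The delicate steps are this cone verification and the excess computations for the candidate boxes in the $|T| = 1$ case; both reduce to straightforward arithmetic via Theorem~\ref{thm:box_excess}.
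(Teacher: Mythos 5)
Your proposal is correct, but it takes a genuinely different route from the paper. You keep the enclosing box $B=\enc(WW_n)$ fixed and redistribute the missing vertices $T=B\setminus WW_n$ within the final ``strip'' (or, when $\lvert T\rvert=1$, swap to an explicitly named alternative box such as $\hat B(m,m)$ or $B(m-1,m+2)$), proving minimality via the cone criterion of Theorem~\ref{thm:minimal set characterization_refined} and non-congruence via an $\ell_1$-diameter comparison of $T$ versus the split set $T'$. The paper instead changes the enclosing box: it passes to $B'=B(\alpha-1,\beta+1)$, which by Lemmas~\ref{lem:bdry} and~\ref{lem:area} has the same boundary as $B$ and size $\lvert B\rvert$ or $\lvert B\rvert-1$, then deletes at most $k=\lvert T\rvert$ vertices along an extremal line of $B'$ using Lemma~\ref{lemma_exc_bound}(1) to produce a minimal set $A$ of size $n$ with $\enc(A)=B'$; non-congruence is then immediate because $\enc(A)$ and $\enc(WW_n)$ are non-congruent boxes. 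The paper's construction is more uniform (a single argument plus the $\hat B(2,2)$ edge case, with no appeal to the internal ordering of the Wang--Wang segments), whereas yours requires verifying that $T$ is a contiguous suffix of a strip, a case split on $\lvert T\rvert$, and separate excess computations for three candidate boxes. What your approach buys is a stronger-flavored conclusion in the $\lvert T\rvert\ge 2$ case: it exhibits non-congruent minimal sets of the same size sharing a congruent enclosing box, which the paper's argument cannot produce. All of your arithmetic (sizes and excesses of the candidate boxes via Lemma~\ref{lem:area}, Theorem~\ref{thm:box_excess} and Remark~\ref{rmk:B vs B hat}) and your cone verification check out, so I see no gap, only details of the Wang--Wang segment structure that would need to be written out.
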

\begin{proof}
	The box $B\coloneqq \enc(WW_n)$ is congruent to $B(\alpha,\beta)$ where either $\alpha= \beta$ or $\alpha+1=\beta$. By Lemma~\ref{lem:special box1 excess} 
	and Theorem~\ref{thm:minimal set characterization}, $k\coloneqq \lvert B\rvert - \lvert WW_n\rvert\leq \Exc(B)\leq \frac{\alpha}{2}$. 
	Since $k\geq 1$ (as $WW_n$ is not a box),
	Lemma~\ref{lem:special box1 excess} implies that either $\alpha\geq 3$ or $\alpha=\beta=2$ (indeed, $\Exc(B(2,3)) =0$). 
	In the latter case, $k=1$ and $n=4$, 
	so we observe that $\lvert \hat B(2,2)\rvert =\lvert WW_4\rvert$
	 and $\lvert \partial \hat B(2,2)\rvert =\lvert \partial WW_4\rvert$. 
	Since $\hat B(2,2)$ is not congruent to $WW_4$, we are done.
	
	Therefore, we may assume $\alpha\geq 3$. 
	Let $B'\coloneqq B(\alpha-1,\beta+1)$. 
	By Lemma~\ref{lem:bdry}, $\lvert \partial B\rvert=\lvert \partial B'\rvert$.
	By Lemma~\ref{lem:area},
	\[\lvert B\rvert =\left\lfloor\frac{\alpha \beta +\alpha+\beta+2}{2}\right\rfloor\]
	\[\lvert B'\rvert =\left\lfloor\frac{(\alpha-1)(\beta+1)+\alpha+\beta+2}{2}\right\rfloor
	=\left\lfloor\frac{\alpha\beta+2\alpha+1}{2}\right\rfloor.\]
	 Thus, we have that  $\lvert B\rvert-1\leq \lvert B'\rvert\leq\lvert B\rvert$ in both the case that $\alpha = \beta$ and that $\beta = \alpha + 1$.
	 
	 Note that the line $y = x$ contains  $\lfloor \frac{\beta + 1}{2}+1\rfloor$ vertices of $B'$.
	 Since $k\leq\frac{\alpha}{2}< \frac{\beta+1}{2}$, by Lemma~\ref{lemma_exc_bound}  there exists some set $A$ such that $\lvert \partial A\rvert =\lvert \partial B\rvert=\lvert \partial WW_n\rvert$, $\lvert A\rvert =\lvert B\rvert-k=\lvert WW_n\rvert$, and $\enc(A)=B'$. In particular, $A$ must be minimal. Since $B'=\enc(A)$ and $B=\enc(WW_n)$ are not congruent, $A$ and $WW_n$ are not congruent. 
\end{proof}

\begin{thm}\label{thm:unique_min}
	\uniquemin
\end{thm}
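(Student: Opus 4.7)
The plan is to combine Lemma~\ref{lem:wwset not uniq min} with a case analysis of which boxes can have a prescribed boundary size and cardinality. I would handle the two implications separately.

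For the ``only if'' direction, suppose $A$ is uniquely minimal with $|A| = N$. Since $WW_N$ is minimal of size $N$, uniqueness forces $A$ to be congruent to $WW_N$. If $WW_N$ were not a box, then Lemma~\ref{lem:wwset not uniq min} would produce a minimal set of size $N$ not congruent to $WW_N$, contradicting uniqueness. Hence $A$ is a Wang--Wang box, and by Remark~\ref{rmk:wang_wang_boxes}, $A$ is congruent to $B(m, m)$ or $B(m, m+1)$ for some $m \in \bN$. To eliminate $B(m, m)$ for odd $m$, I would invoke Lemma~\ref{lem:boxefficient}, which says $B(m-1, m+1)$ is efficient, and combine it with Lemmas~\ref{lem:bdry} and~\ref{lem:area} to conclude $|\partial B(m-1, m+1)| = |\partial B(m, m)|$ and $|B(m-1, m+1)| = |B(m, m)|$; the two boxes have different moduli and hence are not congruent, so $B(m, m)$ fails to be uniquely minimal.

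For the ``if'' direction, fix $A$ equal to $B(2n, 2n)$ or $B(n, n+1)$ and let $A'$ be any minimal set with $|A'| = |A|$. Since $A$ is a Wang--Wang box, it is efficient by Lemma~\ref{lem:boxefficient}, so $|\partial A'| = |\partial A|$. Applying Theorem~\ref{thm:minimal set characterization intro} to $A'$, the enclosing box $B' \coloneqq \enc(A')$ is minimal with $|\partial B'| = |\partial A|$ and $|B'| \ge |A'| = |A|$. The aim is to show that this forces $B'$ to be congruent to $A$; then $|A'| = |B'|$ gives $A' = B' \cong A$, establishing unique minimality.

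The main obstacle is this last step: enumerating all boxes with boundary $|\partial A|$ and cardinality at least $|A|$, and showing the only one up to congruence is $A$. Lemma~\ref{lem:bdry} pins down $\alpha + \beta$, and Lemma~\ref{lem:area} then evaluates the cardinality. For $A = B(2n, 2n)$ the constraint is $\alpha + \beta = 4n$ with $\alpha, \beta$ of the same parity; an elementary AM--GM style argument shows $\alpha\beta$ (and hence $|B(\alpha, \beta)|$) is uniquely maximized at $\alpha = \beta = 2n$, while Remark~\ref{rmk:B vs B hat} excludes any $\hat B$ candidate. For $A = B(n, n+1)$ the constraint is $\alpha + \beta = 2n + 1$, which forces $\alpha$ and $\beta$ to have opposite parity (so no $\hat B$ candidates arise) and similarly pins the modulus to $\{n, n+1\}$.
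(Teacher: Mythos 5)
Your proof is correct. The ``only if'' direction is essentially identical to the paper's: reduce to $WW_{|A|}$, invoke Lemma~\ref{lem:wwset not uniq min} to force $A$ to be a Wang--Wang box, and rule out $B(m,m)$ for odd $m$ by comparing with $B(m-1,m+1)$. The ``if'' direction, however, takes a genuinely different route. The paper observes that any minimal $A'$ with $|A'|=|A|$ inherits efficiency from $A$ and then appeals to the classification of efficient sets (Lemma~\ref{lem:boxefficient}) to pin down the congruence class; you instead pass to $\enc(A')$ via Theorem~\ref{thm:minimal set characterization intro}, and classify all boxes with the prescribed boundary size $\alpha+\beta+4$ and cardinality at least $|A|$ by an AM--GM argument on $\alpha\beta$ subject to $\alpha+\beta$ fixed, concluding $\enc(A')=A'\cong A$. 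Both work; your version trades the efficiency classification for a short direct computation (one should just be slightly careful with the floors in Lemma~\ref{lem:area} --- e.g.\ when $\alpha+\beta=4n$ and $\{\alpha,\beta\}=\{2n-1,2n+1\}$ the product drops only by $1$, but the cardinality still strictly decreases --- and note that Remark~\ref{rmk:B vs B hat} disposes of the $\hat B$ candidates in the even case). A side benefit of your route is that it sidesteps the implicit size-disjointness check the paper's phrase ``any efficient set of size $|B|$ is congruent to $B$'' quietly relies on, namely that no other congruence class of efficient sets shares the cardinality $|B(2n,2n)|$ or $|B(n,n+1)|$.
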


\begin{proof}
	Suppose $A$ is uniquely minimal. Since $WW_{\lvert A\rvert}$ is minimal and   $\lvert WW_{\lvert A\rvert}\rvert=\lvert A\rvert$, $A$ must be congruent to $WW_{\lvert A\rvert}$. By Lemma~\ref{lem:wwset not uniq min}, $A$ must be a box. We note that $A$ cannot be congruent to $B(r,r)$ for odd $r\in \bN$, since  Lemma~\ref{lem:bdry} and Lemma~\ref{lem:area} imply that $\lvert B(r-1,r+1)\rvert= \lvert B(r,r)\rvert$ and $\lvert \partial B(r-1,r+1)\rvert=\lvert \partial B(r,r)\rvert$. Thus $A$ is a  box of the form $B(2n,2n)$ or $B(n,n+1)$ for some $n\in \bN$ by Remark~\ref{rmk:wang_wang_boxes}.
	
	For the converse, suppose $B$ is congruent to $B(2n,2n)$ or $B(n,n+1)$ for some $n\in \bN$. In particular, $B$ is congruent to a Wang--Wang set, so it is minimal.  Suppose $A$ is another minimal set such that $\lvert A\rvert=\lvert B\rvert$. It follows that $\lvert \partial A \rvert = \lvert \partial B \rvert$. By Lemma~\ref{lem:boxefficient} $B$ is efficient. Since $\lvert A\rvert=\lvert B\rvert$, we get that $A$ is also efficient. Furthermore, Lemma~\ref{lem:boxefficient} also implies that any efficient set of size $\lvert B\rvert$ is actually congruent to $B$, hence $A$ is congruent to $B$.
\end{proof}

\begin{cor} \label{cor_one_inf_comp}
	The graph $\mathcal{G}$ contains exactly one infinite connected component.
\end{cor}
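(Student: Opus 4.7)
The plan is to identify the unique infinite component as the one containing the Wang--Wang sets, and then show every infinite component coincides with it. Since the sequence $WW_1 \subset WW_2 \subset \cdots$ consists of minimal sets with $\lvert WW_{n+1} \triangle WW_n \rvert = 1$ and distinct sizes, their congruence classes form an infinite path in $\mathcal{G}$; call the infinite component containing them $C_{WW}$. It remains to show that any infinite component $C$ equals $C_{WW}$.

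The first step is to observe that for every $n \in \bN$ there are only finitely many congruence classes of minimal sets of size $n$: by Lemma~\ref{lem:bdry monotone} any such set has boundary of size $\lvert \partial WW_n\rvert$, hence its closed neighbourhood is bounded, so up to translation only finitely many placements occur. Consequently, since $C$ is infinite, it must contain vertices of arbitrarily large grade, where the \emph{grade} of a vertex of $\mathcal{G}$ is the size of any of its representatives.

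Next I would exploit the graded structure of $\mathcal{G}$: every edge joins a vertex of grade $n$ to a vertex of grade $n+1$. Fix $v_0 \in C$ of grade $n_0$. Along any path from $v_0$ in $C$, grades change by $\pm 1$ at each edge, so the set of grades encountered along the path is an interval containing $n_0$. Hence the set of grades appearing in $C$ is itself an interval containing $n_0$, and by the previous paragraph it is unbounded above. Therefore $C$ contains a vertex of every sufficiently large grade; say of every grade $\geq m$.

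Finally, I would invoke Theorem~\ref{thm:unique_min}: the set $B(N, N+1)$ is uniquely minimal for every $N\in \bN$. Choose $N$ large enough that $\lvert B(N, N+1)\rvert \geq m$; then $C$ contains a vertex of grade $\lvert B(N, N+1)\rvert$, and this vertex must be the unique congruence class represented by $B(N, N+1)$. Since $B(N, N+1)$ is a Wang--Wang set, this vertex also lies in $C_{WW}$, forcing $C = C_{WW}$. The only mildly delicate point is justifying that the grade set of $C$ is an interval unbounded above; the rest is a direct application of Theorem~\ref{thm:unique_min} together with the preceding lemmas.
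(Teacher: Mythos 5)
Your proof is correct and follows essentially the same route as the paper's: an infinite component must contain vertices of every sufficiently large grading (since there are only finitely many congruence classes of minimal sets of each size), and then Theorem~\ref{thm:unique_min} pins such a component to the one containing the Wang--Wang sets. The only cosmetic difference is that you use the uniquely minimal boxes $B(N,N+1)$ where the paper uses $B(2n,2n)$; both work identically.
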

\begin{proof}
	Let $\cC$ be an infinite component of $\cG$.
	As there are only finitely many sets (up to congruence) of any given size, there exists a number $m_0$ such that $\cC$ contains a vertex of grading $m$ for every $m\geq m_0$.  By Theorem~\ref{thm:unique_min}, $B(2n,2n)$ is uniquely minimal for every $n$. Thus, $\cC$ contains %every  vertex corresponding to 
	$B(2n,2n)$ for every $n$ sufficiently large, and so  $\cC$ is the unique infinite component of $\cG$.
\end{proof}

%%%%%%%%%%%%%%%%%%%%%%%%%%%%%%%
\subsection{Finite components} \label{sec_finite_comps}
%%%%%%%%%%%%%%%%%%%%%%%%%%%%%%
In this subsection, we show that $\mathcal{G}$ contains infinitely many
isolated vertices and finite components with arbitrarily many vertices. To do so,
we prove the following more general result that gives sufficient conditions for a vertex of $\mathcal{G}$ to be contained in a finite component, i.e. a component of $\cG$ that is a finite subgraph. Moreover, the possible gradings of vertices in this component is exactly described. Recall from the introduction that the \textit{height} of a component is the maximal length of a nested sequence of minimal sets in it. 

\begin{prop} \label{thm:isolated_comps}
	Let $B \subset X$ be a box such that: 
	\begin{enumerate}
		\item $\Exc(B) = d \ge 0$
		\item The modulus $\{\alpha, \beta\}$ of $B$ satisfies $\alpha, \beta \ge 2$
		%\mymarpar{Ivan: We were missing this hypothesis.}
		\item Given any standard line $L$, $B \cap L$ is either empty or contains at least $d+2$ vertices. 
		\item $B$ is a dead set
	\end{enumerate}
	Let $\mathcal{C}$ be the component of $\mathcal{G}$ containing the vertex
  representing $B$. 
	Then, for any vertex in $\mathcal{C}$ representing a set $A$, we have that $|B| - d \le |A| \le |B|$ and that $\enc(A)$ is congruent to  $B$. 
	Furthermore, $\mathcal{C}$ contains a  vertex representing a set of size $k$ for every $|B| - d \le k \le |B|$.
	In particular, $\mathcal{C}$ is finite and has height exactly $d+1$.

\end{prop}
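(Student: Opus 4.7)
The proposition splits into two halves: existence of representatives of every size in $[|B|-d,|B|]$ inside $\cC$ together with a nested sequence of length $d+1$, and a non-escape property forcing every vertex of $\cC$ to have enclosing box congruent to $B$ and size in that range.

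For existence, I will fix any extremal line $L$ of $B$ and apply Lemma~\ref{lemma_exc_bound}(1). Hypothesis (3) guarantees $|L \cap B| \geq d+2$, so this produces a nested sequence $B = B_0 \supset B_1 \supset \cdots \supset B_d$ with $|\partial B_i| = |\partial B|$ for every $i$, each set obtained from the previous by removing one vertex of $L$. Lemma~\ref{lem:bdry monotone} makes each $B_i$ minimal, and since at least two vertices of $L$ remain in $B_d$ (while the other three extremal lines of $B$ are untouched) one has $\enc(B_i) = B$ for each $i$. Consecutive $B_i$ differ by a single vertex and give edges of $\cG$ inside $\cC$; this exhibits representatives of every size in $[|B|-d,|B|]$ together with a nested sequence of length $d+1$ inside $\cC$.

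For non-escape, let $A$ be a minimal set in $\cC$ with $\enc(A) = B$ (after applying an automorphism) and suppose $A' = A \triangle \{v\}$ is also minimal. In the downward case $A' = A \setminus \{v\}$, Theorem~\ref{thm:minimal set characterization} gives $|B \setminus A|\leq d$, so hypothesis (3) forces $|A \cap L| \geq |B \cap L|-d \geq 2$ for every extremal line $L$ of $B$; removing a single vertex of $A$ therefore preserves the enclosing box, and Theorem~\ref{thm:minimal set characterization} applied to $A'$ then gives $|A'|\geq |B|-d$. In the upward case $A' = A \cup \{v\}$, if $v \in B$ then $\enc(A') = B$ automatically, so the remaining task is to rule out $v \notin B$.

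Assume for contradiction that $v \notin B$ and $A \cup \{v\}$ is minimal. My plan is to show that $B \cup \{v\}$ is then minimal, contradicting hypothesis (4). Since $A \subseteq B$, one checks directly that $\enc(A \cup \{v\}) = \enc(B \cup \{v\}) =: C$, and the excess condition for $B \cup \{v\}$ relative to $C$ transfers from that of $A \cup \{v\}$ because $|C \setminus (B \cup \{v\})| \leq |C \setminus (A \cup \{v\})| \leq \Exc(C)$. A short inclusion–exclusion on closed neighborhoods, using $|\partial A| = |\partial B|$ together with $N(A) \subseteq N(B)$, yields
\[|\partial(B \cup \{v\})| = |\partial(A \cup \{v\})| - |(N(B)\setminus N(A)) \cap N(v)|,\]
so the boundary condition $|\partial(B\cup\{v\})| = |\partial C|$ reduces to the vanishing $N(v) \cap (N(B)\setminus N(A)) = \emptyset$. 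This vanishing is the main obstacle; I plan to prove it by combining the cone characterization (Theorem~\ref{thm:minimal set characterization_refined}) applied to $A \cup \{v\}$ with a Lemma~\ref{lem:cone char}-style argument, observing that a hypothetical $w \in N(v) \cap (N(B) \setminus N(A))$ would lie in $\partial B$ with all its $B$-neighbors inside $B \setminus A$, forcing a cone at $w$ disjoint from $A \cup \{v\}$ to sweep out enough vertices of $B \setminus A$ along a standard line — bounded below in length by hypothesis (3) — to contradict $|B\setminus A| \leq d$. Once settled, Theorem~\ref{thm:minimal set characterization} makes $B \cup \{v\}$ minimal, contradicting hypothesis (4); the size bound $|A| \in [|B|-d,|B|]$ for all $A \in \cC$ then finishes both finiteness and the height-exactly-$d+1$ claim.
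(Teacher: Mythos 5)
The existence half (the nested chain $B=B_0\supset\cdots\supset B_d$ from Lemma~\ref{lemma_exc_bound}) and the downward non-escape case match the paper and are fine. The genuine gap is in the upward case with $v\notin B$: your argument hinges on the vanishing $N(v)\cap\bigl(N(B)\setminus N(A)\bigr)=\emptyset$, and you explicitly leave this as a ``plan'' with only a vague sketch combining Theorem~\ref{thm:minimal set characterization_refined} with a Lemma~\ref{lem:cone char}-style sweep. As written that step is not proved, and it is the crux of your contradiction, so the proof is incomplete. Ironically, your own inclusion--exclusion already gives $\lvert\partial(B\cup\{v\})\rvert\le\lvert\partial(A\cup\{v\})\rvert$, and since $A\cup\{v\}$ is minimal with $\lvert A\cup\{v\}\rvert\le\lvert B\cup\{v\}\rvert$, Lemma~\ref{lem:bdry monotone} gives the reverse inequality; equality then makes $B\cup\{v\}$ minimal by the second part of that lemma, contradicting deadness --- no cone argument and no vanishing claim needed. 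So the gap is closable, but not by the route you propose.

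For comparison, the paper dispenses with the upward case in one line: it works with the subset $\mathcal{C}'$ of vertices reachable from $B$ by short descending chains, and if $C\subset A$ with $\enc(C)=B$, then Lemma~\ref{lem:nested_sequence_to_enc_box} forces any maximal ascending chain of minimal sets through $C$ to pass through $B$, so either $A\subseteq B$ or $C=B$ and $A=B\cup\{v\}$ is minimal, contradicting hypothesis~(4). That route never needs to compare $\partial(B\cup\{v\})$ with $\partial(A\cup\{v\})$ at all. Your downward case is essentially the paper's, with the same use of hypothesis~(3) to keep the enclosing box from shrinking; one small point to make explicit there is that the removed vertices of the chain in the existence step include a corner of $B$, so the ``other three extremal lines'' are not literally untouched, but each loses at most one vertex and hypothesis~(3) keeps them nonempty.
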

\begin{proof}
	%Let $v_1, \dots, v_k$ be vertices of $L \cap B$ ordered in increasing order by their $y$-values. 
	%For each  $1 \le i \le d$, let $B_i = \partial B \setminus \{v_1, \dots, v_i\}$ and note that $|\partial B| = |\partial B_i|$. 
	%Thus, for each $1 \le i \le k$, $B_i$ is minimal as $\Exc(B) = d \ge k$, $|B_i| = |B| - i$ and there is a vertex representing $B_i$ in $\mathcal{G}$ (as $B_d \subset B_{d-1} \subset \dots \subset B_1 \subset B$). This shows the final claim of the theorem.
	We first prove the final claim of the theorem.
	Let $B = B_0 \supset B_1 \supset \dots \supset B_n$ be the sets given by  Lemma~\ref{lemma_exc_bound} where $|\partial B_i| = |\partial B|$ and $n = |L \cap B| - 1 \ge d+1$ (where $L$ is an extremal line of $B$). As $\Exc(B) = d$, we have that $B_i$ is minimal for each $i \le d$. The claim follows.
	
	Now let $\mathcal{C}'$ be the set of all vertices in $\mathcal{C}$ represented by a set $C$ such that there exists a path $C = C_0, \dots, C_n = B$ in $\mathcal{G}$ with $n \le d$ and $|C_{i+1}| = |C_i| + 1$ for all $0 \le i < n$. 
	As $\Exc(B) = d$, for all $C \in \mathcal{C}'$ we must have that $|\partial C| = |\partial B|$ and, consequently by Lemma~\ref{lem:nested_sequence_to_enc_box}, we have that $\enc(C) = B$. Thus, to prove the remaining claims of the theorem, it is enough to show that $\mathcal{C}' = \mathcal{C}$. Additionally, as $\mathcal{C}$ is connected, it is enough to show that given any vertex $v$ in $\mathcal{G}$, represented by a set $A$, that is adjacent to a vertex in $\mathcal{C}'$, represented by a set $C$, then $v \in \mathcal{C}'$. Let $A$ and $C$ be such sets.
	
	Suppose first that $A \subset C$. Then $|A|  = |C| -1 \ge |B| - d - 1$ (by the definition of $\mathcal{C}'$) and by (3) it follows that $A$ contains a vertex in every standard line which has non-empty intersection with $B$. Thus $\enc(A) = B$. 
	However, by Theorem~\ref{thm:minimal set characterization}, we must have that $|A| \ge |B| - d$. Consequently, $v \in \mathcal{C}'$. On the other hand, suppose that $C \subset A$. As $\enc(C) = B$ and as $B$ is dead, we must also have that $v \in \mathcal{C}'$. Thus, $\mathcal{C}' = \mathcal{C}$ as claimed.
\end{proof}

%\begin{remark} \label{rmk:std_line_intersection}
 % Let $B = B(a, b, c, d)$ be a box. Given a non-negative integer $k$, it easily follows that if $\min(|b-a|, |d-c|)$ is large enough, then given any standard line $L$, $B \cap L$ is either empty or contains at least $k+2$ vertices.  More precisely, one can calculate that this happens when $\min(\frac{|b-a|}{2} + 1, \frac{|d-c|}{2}+1 )\ge k+2$.\end{remark}

%The next two corollaries show that $\mathcal{G}$ has arbitrarily large finite components and infinitely many isolated vertices.

\begin{thm} \label{cor:large_comps}
	The graph $\mathcal{G}$ has finite components of arbitrarily large height and it contains infinitely many isolated vertices.
\end{thm}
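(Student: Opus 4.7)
My strategy is to apply Proposition~\ref{thm:isolated_comps}, which reduces the theorem to producing two infinite families of dead boxes whose modulus is at least $2$ and in which every standard line meeting the box contains at least $\Exc(B)+2$ vertices. A direct count from the definition of a box shows that when $B=B(\alpha,\beta)$ with $\alpha\leq\beta$ and both $\alpha,\beta$ odd, every standard line meeting $B$ contains exactly $(\alpha+1)/2$ or $(\beta+1)/2$ vertices, so the minimum count is $(\alpha+1)/2$. By Lemma~\ref{lem:boxefficient}, such a box is dead whenever $\alpha<\beta$, since the only efficient odd-by-odd boxes are the squares $B(m,m)$ with $m$ odd. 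Candidate parameters can then be fed into Theorem~\ref{thm:box_excess} to verify the excess hypothesis.

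For infinitely many isolated vertices, I take $B_k\coloneqq B(k^2-k+1,\,k^2+k+1)$ for each integer $k\geq 2$. Both entries are odd (because $k(k-1)$ is even) and unequal, so $B_k$ is dead; Theorem~\ref{thm:box_excess} gives $\Exc(B_k)=\bigl\lfloor(k^2+1-k^2)/2\bigr\rfloor=0$; the minimum standard-line count $(k^2-k+2)/2$ is at least $2=\Exc(B_k)+2$ for every $k\geq 2$; and both modulus entries are at least $3$. Proposition~\ref{thm:isolated_comps} with $d=0$ therefore declares that the class of $B_k$ is an isolated vertex of $\cG$. As $k$ varies, the sizes $\lvert B_k\rvert$ strictly increase by Lemma~\ref{lem:area}, so these classes are pairwise distinct.

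For finite components of arbitrarily large height, I take $C_n\coloneqq B(2n+1,\,2n+5)$ for each $n\geq 2$. Being odd-by-odd with $\alpha<\beta$, $C_n$ is dead; its modulus entries are at least $5$; Theorem~\ref{thm:box_excess} gives $\Exc(C_n)=\bigl\lfloor(2n+3-4)/2\bigr\rfloor=n-1$; and the minimum standard-line count $(2n+2)/2=n+1$ equals $\Exc(C_n)+2$, so hypothesis~(3) holds with equality. Proposition~\ref{thm:isolated_comps} then asserts that the component of $C_n$ in $\cG$ is finite with height exactly $n$, which tends to infinity. The main difficulty is the combinatorial balancing act in choosing the parameters: hypothesis~(3) forces the box to be ``fat'' relative to its excess, while hypothesis~(4) and the need to keep the excess small enough to match the line count pull in the opposite direction. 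Odd-by-odd boxes with the moduli above resolve this tension, with equality in~(3) for the family $C_n$.
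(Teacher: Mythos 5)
Your proof is correct and takes essentially the same approach as the paper: both arguments verify the four hypotheses of Proposition~\ref{thm:isolated_comps} for two explicit one-parameter families of dead boxes, one with excess $0$ for the isolated vertices and one with unbounded excess for the large-height components. The paper uses the even-by-even boxes $B(2l^3+l^2-l,\,2l^3+l^2+l)$ and $B(k^2-k,\,k^2+k)$ where you use the odd-by-odd boxes $B(2n+1,\,2n+5)$ and $B(k^2-k+1,\,k^2+k+1)$; your choices check out (and the odd moduli conveniently make all parallel standard lines meet the box in the same number of points, simplifying the verification of hypothesis~(3)).
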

\begin{proof}

	Let $\alpha = 2l^3 + l^2 + l$ and $\beta = 2l^3 + l^2 - l$ for some integer $l \ge 3$.
	 Note that $\alpha$ and $\beta$ are always positive.
	Consider the box $B = B(\alpha, \beta)$. Then $B$ is not congruent to the box $B(m-1, m+1)$ for any odd integer $m$, and $B$ is not congruent to a Wang--Wang box. %, i.e. to $B(m,m)$ or to $B(m, m+1)$ for any integer $m$.
	Therefore by Theorem~\ref{thm:dead_char2}, $B$ is a dead set.
	
	By Proposition~\ref{thm:box_excess}, we have that $\Exc(B) = l^3$. 
	Furthermore, given any standard line $L$, $B \cap L$ is either empty or contains at least $\min(\frac{\alpha}{2} + 1, \frac{\beta}{2}+1 ) = l^3 + \frac{l^2}{2}
  - \frac{l}{2} +1$  vertices.  %Thus we have that for any standard line $L$, either $B \cap L$ is empty or contains at least $l^3 + \frac{l^2}{2} - \frac{l}{2} +1$ vertices.
	In particular, as $l \ge 3$, $B \cap L$ is either empty or contains at least
  $l^3 +2 = \Exc(B) +2$ vertices. 
	Thus by Proposition~\ref{thm:isolated_comps}, the component of $\mathcal{G}$
  containing $B$ is finite and has height 
  at least $l^3$. As this is
  true for any $l \ge 3$, the first claim follows.

  Now let $\alpha = k^2 + k$ and $\beta = k^2 - k$ for some integer $k \ge 4$.
	Then we claim that the box $B = B(\alpha, \beta)$ is an isolated vertex of $\mathcal{G}$. 
	By Theorem~\ref{thm:box_excess}, $\Exc(B) = 0$.
	As above, for any standard line $L$, $B \cap L$ is either empty or contains at least $2$ vertices.
	Also $B$ is a dead set  by Theorem~\ref{thm:dead_char2}.
	Therefore by Proposition~\ref{thm:isolated_comps}, $B$ is an isolated vertex of $\mathcal{G}$. Thus $\mathcal{G}$ contains infinitely many isolated vertices. 
  \end{proof}

%\begin{thm} \label{cor:isolated_vertices}
 %   The graph $\mathcal{G}$ contains infinitely many isolated vertices.
%\end{thm}
%\begin{proof} Let $\alpha = k^2 + k$ and $\beta = k^2 - k$ for some integer $k \ge 4$.
	%Then the box $B = B(\alpha, \beta)$ is an isolated vertex of $\mathcal{G}$. 
	%By Theorem~\ref{thm:box_excess}, $\Exc(B) = 0$.
	%Given any standard line $L$, by Remark~\ref{rmk:std_line_intersection}, $B \cap L$ is either empty or contains at least $2$ vertices.
	%By Theorem~\ref{thm:dead_char2}, $B$ is a dead set.
	%Thus, $B$ is an isolated vertex of $\mathcal{G}$ by Proposition~\ref{thm:isolated_comps}.
%\end{proof} 

\bibliography{refs}

\providecommand{\bysame}{\leavevmode\hbox to3em{\hrulefill}\thinspace}
\providecommand{\MR}{\relax\ifhmode\unskip\space\fi MR }
% \MRhref is called by the amsart/book/proc definition of \MR.
\providecommand{\MRhref}[2]{%
  \href{http://www.ams.org/mathscinet-getitem?mr=#1}{#2}
}
\providecommand{\href}[2]{#2}
\begin{thebibliography}{HLW06}

\bibitem[BE18]{barbererde}
Ben Barber and Joshua Erde, \emph{Isoperimetry in integer lattices}, Discrete
  Analysis \textbf{7} (2018).

\bibitem[Ber67]{bernstein}
A.~J. Bernstein, \emph{Maximally connected arrays on the {$n$}-cube}, SIAM J.
  Appl. Math. \textbf{15} (1967), 1485--1489.

\bibitem[BL91a]{bollobasleader91}
B\'{e}la Bollob\'{a}s and Imre Leader, \emph{Compressions and isoperimetric
  inequalities}, J. Combin. Theory Ser. A \textbf{56} (1991), no.~1, 47--62.

\bibitem[BL91b]{bollobasleader91edge}
\bysame, \emph{Edge-isoperimetric inequalities in the grid}, Combinatorica
  \textbf{11} (1991), no.~4, 299--314.

\bibitem[Chv75]{chvatalova}
Jarmila Chv\'{a}talov\'{a}, \emph{Optimal labelling of a product of two paths},
  Discrete Math. \textbf{11} (1975), 249--253.

\bibitem[Har64]{harper64}
L.~H. Harper, \emph{Optimal assignments of numbers to vertices}, J. Soc.
  Indust. Appl. Math. \textbf{12} (1964), 131--135.

\bibitem[Har66]{harper66}
\bysame, \emph{Optimal numberings and isoperimetric problems on graphs}, J.
  Combinatorial Theory \textbf{1} (1966), 385--393.

\bibitem[Har76]{hart}
Sergiu Hart, \emph{A note on the edges of the {$n$}-cube}, Discrete Math.
  \textbf{14} (1976), no.~2, 157--163.

\bibitem[Har99]{harper99}
L.~H. Harper, \emph{On an isoperimetric problem for {H}amming graphs},
  Proceedings of the {C}onference on {O}ptimal {D}iscrete {S}tructures and
  {A}lgorithms---{ODSA} '97 ({R}ostock), vol.~95, 1999, pp.~285--309.

\bibitem[Har04]{harper-book}
\bysame, \emph{Global methods for combinatorial isoperimetric problems},
  Cambridge Studies in Advanced Mathematics, vol.~90, Cambridge University
  Press, Cambridge, 2004.

\bibitem[HLW06]{hoory2006expander}
Shlomo Hoory, Nathan Linial, and Avi Wigderson, \emph{Expander graphs and their
  applications}, Bull. Amer. Math. Soc. (N.S.) \textbf{43} (2006), no.~4,
  439--561.

\bibitem[Lin64]{lindsey}
John~H. Lindsey, II, \emph{Assignment of numbers to vertices}, Amer. Math.
  Monthly \textbf{71} (1964), 508--516.

\bibitem[Mog83]{moghadam}
H.S. Moghadam, \emph{Compression operators and a solution to the bandwidth
  problem of the product of $n$ paths}, Ph.D. Thesis, University of California,
  Riverside (1983).

\bibitem[Sie08]{sieben2008polyominoes}
N\'{a}ndor Sieben, \emph{Polyominoes with minimum site-perimeter and full set
  achievement games}, European J. Combin. \textbf{29} (2008), no.~1, 108--117.

\bibitem[VB08]{vainsencher2008isop}
Daniel Vainsencher and Alfred~M. Bruckstein, \emph{On isoperimetrically optimal
  polyforms}, Theoret. Comput. Sci. \textbf{406} (2008), no.~1-2, 146--159.

\bibitem[VR12]{radcliffeveomett12}
Ellen Veomett and A.~J. Radcliffe, \emph{Vertex isoperimetric inequalities for
  a family of graphs on {$\mathbb{Z}^k$}}, Electron. J. Combin. \textbf{19}
  (2012), no.~2, Paper 45, 18.

\bibitem[WW77]{wangwang77}
Da~Lun Wang and Ping Wang, \emph{Discrete isoperimetric problems}, SIAM J.
  Appl. Math. \textbf{32} (1977), no.~4, 860--870.

\end{thebibliography}
\bibliographystyle{amsalpha}
\end{document}